 \newcommand{\gbeg}[2]{ %to open the graph
   \unitlength=1pt
   \grrow = #2
   \grcolumn = 0
   \grcalca = #1
   \grcalcb = #2
   \multiply \grcalca by \factor
   \grwidth = \grcalca
   \multiply \grcalcb by \factor
   \begin{minipage}{\grcalca pt}
   \begin{picture}(\grcalca,\grcalcb)
   \advance \grcalcb by -\factor
   %\put(0, \grcalcb){\line(1,0){\grwidth}} % erased to remove the horizontal line at the beginning
	}
  \newcommand{\gend}{ %to close the graph
   %\put(0, \factor){\line(1,0){\grwidth}} % erased to remove the horizontal line at the end
   \end{picture}
   {\vskip2.5ex}
   \end{minipage} }
 \newcommand{\gnl}{ %new line
   \advance \grrow by -1
   \grcolumn = 0}
 \newcommand{\gvac}[1]{  %empty space [width]    
   \advance \grcolumn by #1} 
 \newcommand{\gcl}[1]{ %straight line [length]
   \grcalca = \grcolumn
   \multiply \grcalca by \factor
   \advance \grcalca by \hfactor
   \grcalcb = \grrow
   \multiply \grcalcb by \factor
   \grcalcc = #1
   \multiply \grcalcc by \factor
   \put(\grcalca,\grcalcb) {\line(0,-1){\grcalcc}} 
   \advance \grcolumn by 1}
 \newcommand{\gcn}[4]{ %curved line [width][length][starting point][ending point] (if width=1 then we have three positions: 1=the leftmost, 2=the middle, 3=the rightmost; if width=2 then we have 5 and so on)
   \grcalca = \grcolumn
   \multiply \grcalca by \factor
   \grcalci = #3
   \multiply \grcalci by \hfactor
   \advance \grcalca by \grcalci
   \grcalcb = \grcolumn
   \multiply \grcalcb by \factor 
   \grcalci = #3
   \advance \grcalci by #4
   \multiply \grcalci by \qfactor
   \advance \grcalcb by \grcalci
   \grcalcc = \grcolumn
   \multiply \grcalcc by \factor
   \grcalci = #4
   \multiply \grcalci by \hfactor
   \advance \grcalcc by \grcalci
   \grcalcd = \grrow
   \multiply \grcalcd by \factor 
   \grcalce = \grrow
   \multiply \grcalce by \factor 
   \grcalci = #2
   \multiply \grcalci by \tfactor
   \advance \grcalce by -\grcalci
   \grcalcf = \grrow
   \multiply \grcalcf by \factor 
   \grcalci = #2
   \multiply \grcalci by \hfactor
   \advance \grcalcf by -\grcalci
   \grcalcg = \grrow
   \multiply \grcalcg by \factor 
   \grcalci = #2
   \multiply \grcalci by \tfactor
   \multiply \grcalci by 2
   \advance \grcalcg by -\grcalci
   \grcalch = \grrow
   \advance \grcalch by -#2
   \multiply \grcalch by \factor 
   \qbezier(\grcalca,\grcalcd)(\grcalca,\grcalce)(\grcalcb,\grcalcf) 
   \qbezier(\grcalcb,\grcalcf)(\grcalcc,\grcalcg)(\grcalcc,\grcalch) 
   \advance \grcolumn by #1}
 \newcommand{\gnot}[1]{ %text as label
   \grcalca = \grcolumn
   \multiply \grcalca by \factor
   \advance \grcalca by \hfactor
   \grcalcb = \grrow
   \multiply \grcalcb by \factor
   \advance \grcalcb by -\hfactor
   \put(\grcalca,\grcalcb) {\makebox(0,0){$\scriptstyle #1$}} }
 \newcommand{\got}[2]{ %beginning or ending texts
   \grcalca = \grcolumn
   \multiply \grcalca by \factor
   \grcalcc = #1
   \multiply \grcalcc by \hfactor
   \advance \grcalca by \grcalcc
   \grcalcb = \grrow
   \multiply \grcalcb by \factor
   \advance \grcalcb by -\tfactor
   \advance \grcalcb by -\tfactor
   \put(\grcalca,\grcalcb){\makebox(0,0)[b]{$#2$}}
   \advance \grcolumn by #1}
 \newcommand{\gob}[2]{
   \grcalca = \grcolumn
   \multiply \grcalca by \factor
   \grcalcc = #1
   \multiply \grcalcc by \hfactor
   \advance \grcalca by \grcalcc
   \put(\grcalca,0){\makebox(0,0)[b]{$#2$}}
   \advance \grcolumn by #1}
 \newcommand{\gmu}{  %multiplication
   \grcalca = \grcolumn
   \advance \grcalca by 1
   \multiply \grcalca by \factor
   \grcalcb = \grrow
   \multiply \grcalcb by \factor
   \grcalcc = \factor
   \advance \grcalcc by \hfactor
   \put(\grcalca,\grcalcb){\oval(\factor,\grcalcc)[b]}
   \advance \grcalcb by -\hfactor
   \advance \grcalcb by -\qfactor
   \put(\grcalca,\grcalcb) {\line(0,-1){\qfactor}} 
   \advance \grcolumn by 2}
 \newcommand{\gcmu}{   %comultiplication
   \grcalca = \grcolumn
   \advance \grcalca by 1
   \multiply \grcalca by \factor
   \grcalcb = \grrow
   \advance \grcalcb by -1
   \multiply \grcalcb by \factor
   \grcalcc = \factor
   \advance \grcalcc by \hfactor
   \put(\grcalca,\grcalcb){\oval(\factor,\grcalcc)[t]}
   \advance \grcalcb by \factor
   \put(\grcalca,\grcalcb) {\line(0,-1){\qfactor}} 
   \advance \grcolumn by 2}
 \newcommand{\glm}{  %left action
   \grcalca = \grcolumn
   \multiply \grcalca by \factor
   \advance \grcalca by \hfactor
   \grcalcb = \grcalca
   \advance \grcalcb by \factor
   \grcalcc = \grrow
   \multiply \grcalcc by \factor
   \grcalcd = \grcalcc
   \advance \grcalcd by -\tfactor
   \grcalce = \grcalcd
   \advance \grcalce by -\tfactor
   \put(\grcalca, \grcalcc){\line(0,-1){\tfactor}}
   \put(\grcalca, \grcalcd){\line(1,0){\factor}}
   \put(\grcalca, \grcalcd){\line(3,-1){\factor}}
   \put(\grcalcb, \grcalcc){\line(0,-1){\factor}}
   \advance \grcolumn by 2}
 \newcommand{\grm}{ %right action
   \grcalcb = \grcolumn
   \multiply \grcalcb by \factor
   \advance \grcalcb by \hfactor
   \grcalca = \grcalcb
   \advance \grcalca by \factor
   \grcalcc = \grrow
   \multiply \grcalcc by \factor
   \grcalcd = \grcalcc
   \advance \grcalcd by -\tfactor
   \grcalce = \grcalcd
   \advance \grcalce by -\tfactor
   \put(\grcalca, \grcalcc){\line(0,-1){\tfactor}}
   \put(\grcalca, \grcalcd){\line(-1,0){\factor}}
   \put(\grcalca, \grcalcd){\line(-3,-1){\factor}}
   \put(\grcalcb, \grcalcc){\line(0,-1){\factor}}
   \advance \grcolumn by 2}
 \newcommand{\glcm}{ %left coaction
   \grcalca = \grcolumn
   \multiply \grcalca by \factor
   \advance \grcalca by \hfactor
   \grcalcb = \grcalca
   \advance \grcalcb by \factor
   \grcalcc = \grrow
   \advance \grcalcc by -1
   \multiply \grcalcc by \factor
   \grcalcd = \grcalcc
   \advance \grcalcd by \tfactor
   \grcalce = \grcalcd
   \advance \grcalce by \tfactor
   \put(\grcalca, \grcalcc){\line(0,1){\tfactor}}
   \put(\grcalca, \grcalcd){\line(1,0){\factor}}
   \put(\grcalca, \grcalcd){\line(3,1){\factor}}
   \put(\grcalcb, \grcalcc){\line(0,1){\factor}}
   \advance \grcolumn by 2}
 \newcommand{\grcm}{ %right coaction
   \grcalcb = \grcolumn
   \multiply \grcalcb by \factor
   \advance \grcalcb by \hfactor
   \grcalca = \grcalcb
   \advance \grcalca by \factor
   \grcalcc = \grrow
   \advance \grcalcc by -1
   \multiply \grcalcc by \factor
   \grcalcd = \grcalcc
   \advance \grcalcd by \tfactor
   \grcalce = \grcalcd
   \advance \grcalce by \tfactor
   \put(\grcalca, \grcalcc){\line(0,1){\tfactor}}
   \put(\grcalca, \grcalcd){\line(-1,0){\factor}}
   \put(\grcalca, \grcalcd){\line(-3,1){\factor}}
   \put(\grcalcb, \grcalcc){\line(0,1){\factor}}
   \advance \grcolumn by 2}
 \newcommand{\gwmu}[1]{    %wide multiplication [width] (standard width=2)
   \grcalca = \grcolumn
   \multiply \grcalca by \factor
   \grcalcd = \hfactor
   \multiply \grcalcd by #1
   \advance \grcalca by \grcalcd
   \grcalcb = \grrow
   \multiply \grcalcb by \factor
   \grcalcc = \factor
   \advance \grcalcc by \hfactor
   \grcalcd = #1
   \advance \grcalcd by -1
   \multiply \grcalcd by \factor
   \put(\grcalca,\grcalcb){\oval(\grcalcd,\grcalcc)[b]}
   \advance \grcalcb by -\hfactor
   \advance \grcalcb by -\qfactor
   \put(\grcalca,\grcalcb) {\line(0,-1){\qfactor}} 
   \advance \grcolumn by #1}
 \newcommand{\gwcm}[1]{   %wide comultiplication [widht] (standard width=2)
   \grcalca = \grcolumn
   \multiply \grcalca by \factor
   \grcalcd = \hfactor
   \multiply \grcalcd by #1
   \advance \grcalca by \grcalcd
   \grcalcb = \grrow
   \advance \grcalcb by -1
   \multiply \grcalcb by \factor
   \grcalcc = \factor
   \advance \grcalcc by \hfactor
   \grcalcd = #1
   \advance \grcalcd by -1
   \multiply \grcalcd by \factor
   \put(\grcalca,\grcalcb){\oval(\grcalcd,\grcalcc)[t]}
   \advance \grcalcb by \factor
   \put(\grcalca,\grcalcb) {\line(0,-1){\qfactor}} 
   \advance \grcolumn by #1}
 \newcommand{\gwmuc}[1]{    % multiplication-like half-moon
   \grcalca = \grcolumn
   \multiply \grcalca by \factor
   \advance \grcalca by \hfactor
   \grcalcb = \grrow
   \multiply \grcalcb by \factor
   \grcalcc = #1
   \advance \grcalcc by -1
   \multiply \grcalcc by \factor
   \put(\grcalca,\grcalcb){\line(1,0){\grcalcc}}
   \advance \grcalca by -\hfactor
   \grcalcd = \hfactor
   \multiply \grcalcd by #1
   \advance \grcalca by \grcalcd
   \grcalcc = \factor
   \advance \grcalcc by \hfactor
   \grcalcd = #1
   \advance \grcalcd by -1
   \multiply \grcalcd by \factor
   \put(\grcalca,\grcalcb){\oval(\grcalcd,\grcalcc)[b]}
   \advance \grcalcb by -\hfactor
   \advance \grcalcb by -\qfactor
   \put(\grcalca,\grcalcb) {\line(0,-1){\qfactor}} 
   \advance \grcolumn by #1}
 \newcommand{\gwcmc}[1]{   %comultiplication-like half-moon
   \grcalca = \grcolumn
   \multiply \grcalca by \factor
   \advance \grcalca by \hfactor
   \grcalcb = \grrow
   \multiply \grcalcb by \factor
   \advance \grcalcb by -\factor
   \grcalcc = #1
   \advance \grcalcc by -1
   \multiply \grcalcc by \factor
   \put(\grcalca,\grcalcb){\line(1,0){\grcalcc}}
   \grcalcd = #1
   \advance \grcalcd by -1
   \multiply \grcalcd by \hfactor
   \advance \grcalca by \grcalcd
   \grcalcc = \factor
   \advance \grcalcc by \hfactor
   \grcalcd = #1
   \advance \grcalcd by -1
   \multiply \grcalcd by \factor
   \put(\grcalca,\grcalcb){\oval(\grcalcd,\grcalcc)[t]}
   \advance \grcalcb by \factor
   \put(\grcalca,\grcalcb) {\line(0,-1){\qfactor}} 
   \advance \grcolumn by #1}
 \newcommand{\gev}{  %evaluation
   \grcalca = \grcolumn
   \advance \grcalca by 1
   \multiply \grcalca by \factor
   \grcalcb = \grrow
   \multiply \grcalcb by \factor
   \grcalcc = \factor
   \advance \grcalcc by \hfactor
   \put(\grcalca,\grcalcb){\oval(\factor,\grcalcc)[b]}
   \advance \grcolumn by 2}
 \newcommand{\gdb}{   %dual basis
   \grcalca = \grcolumn
   \advance \grcalca by 1
   \multiply \grcalca by \factor
   \grcalcb = \grrow
   \advance \grcalcb by -1
   \multiply \grcalcb by \factor
   \grcalcc = \factor
   \advance \grcalcc by \hfactor
   \put(\grcalca,\grcalcb){\oval(\factor,\grcalcc)[t]}
   \advance \grcolumn by 2}
 \newcommand{\gwev}[1]{    %wide evaluation [width] (standard width=2)
   \grcalca = \grcolumn
   \multiply \grcalca by \factor
   \grcalcd = \hfactor
   \multiply \grcalcd by #1
   \advance \grcalca by \grcalcd
   \grcalcb = \grrow
   \multiply \grcalcb by \factor
   \grcalcc = \factor
   \advance \grcalcc by \hfactor
   \grcalcd = #1
   \advance \grcalcd by -1
   \multiply \grcalcd by \factor
   \put(\grcalca,\grcalcb){\oval(\grcalcd,\grcalcc)[b]}
   \advance \grcolumn by #1}
 \newcommand{\gwdb}[1]{   %wide dual basis [width] (standard width=2)
   \grcalca = \grcolumn
   \multiply \grcalca by \factor
   \grcalcd = \hfactor
   \multiply \grcalcd by #1
   \advance \grcalca by \grcalcd
   \grcalcb = \grrow
   \advance \grcalcb by -1
   \multiply \grcalcb by \factor
   \grcalcc = \factor
   \advance \grcalcc by \hfactor
   \grcalcd = #1
   \advance \grcalcd by -1
   \multiply \grcalcd by \factor
   \put(\grcalca,\grcalcb){\oval(\grcalcd,\grcalcc)[t]}
   \advance \grcolumn by #1}
 \newcommand{\gdual}{ % wide angular dual basis
	 \grcalca = \grcolumn
   \advance \grcalca by 1
   \multiply \grcalca by \factor
	 \advance \grcalca by \hfactor
   \grcalcb = \grrow
   \multiply \grcalcb by \factor
	 \advance \grcalcb by -\hfactor
   \grcalcc = \factor
	 \put(\grcalca,\grcalcb) {\line(2,-1){\grcalcc}}
	 \put(\grcalca,\grcalcb) {\line(-2,-1){\grcalcc}}
   \advance \grcolumn by 3 }
	\newcommand{\geval}{ % wide angular evaluation
	 \grcalca = \grcolumn
   \advance \grcalca by 1
   \multiply \grcalca by \factor
	 \advance \grcalca by \hfactor
   \grcalcb = \grrow
   \multiply \grcalcb by \factor
	 \advance \grcalcb by -\hfactor
   \grcalcc = \factor
	 \put(\grcalca,\grcalcb) {\line(2,1){\grcalcc}}
	 \put(\grcalca,\grcalcb) {\line(-2,1){\grcalcc}}
   \advance \grcolumn by 3 }
 \newcommand{\gbr}{ %braiding
   \grcalca = \grcolumn
   \multiply \grcalca by \factor
   \advance \grcalca by \hfactor
   \grcalcb = \grcalca
   \advance \grcalcb by \hfactor
   \grcalcc = \grcalca
   \advance \grcalcc by \factor
   \grcalcd = \grrow
   \multiply \grcalcd by \factor
   \grcalce = \grcalcd
   \advance \grcalce by -\tfactor
   \grcalcf = \grcalcd
   \advance \grcalcf by -\hfactor
   \grcalcg = \grcalce
   \advance \grcalcg by -\tfactor
   \grcalch = \grcalcd
   \advance \grcalch by -\factor
   \qbezier(\grcalca,\grcalcd)(\grcalca,\grcalce)(\grcalcb,\grcalcf) 
   \qbezier(\grcalcb,\grcalcf)(\grcalcc,\grcalcg)(\grcalcc,\grcalch) 
   \advance \grcalcf by -\dfactor
   \advance \grcalcb by -\sfactor
   \qbezier(\grcalca,\grcalch)(\grcalca,\grcalcg)(\grcalcb,\grcalcf) 
   \advance \grcalcf by \sfactor
   \advance \grcalcb by \tfactor
   \qbezier(\grcalcc,\grcalcd)(\grcalcc,\grcalce)(\grcalcb,\grcalcf) 
   \advance \grcolumn by 2}
 \newcommand{\gibr}{  %inverse braiding
   \grcalca = \grcolumn
   \multiply \grcalca by \factor
   \advance \grcalca by \hfactor
   \grcalcb = \grcalca
   \advance \grcalcb by \hfactor
   \grcalcc = \grcalca
   \advance \grcalcc by \factor
   \grcalcd = \grrow
   \multiply \grcalcd by \factor
   \grcalce = \grcalcd
   \advance \grcalce by -\tfactor
   \grcalcf = \grcalcd
   \advance \grcalcf by -\hfactor
   \grcalcg = \grcalce
   \advance \grcalcg by -\tfactor
   \grcalch = \grcalcd
   \advance \grcalch by -\factor
   \qbezier(\grcalcc,\grcalcd)(\grcalcc,\grcalce)(\grcalcb,\grcalcf) 
   \qbezier(\grcalcb,\grcalcf)(\grcalca,\grcalcg)(\grcalca,\grcalch) 
   \advance \grcalcf by -\dfactor
   \advance \grcalcb by \sfactor
   \qbezier(\grcalcc,\grcalch)(\grcalcc,\grcalcg)(\grcalcb,\grcalcf) 
   \advance \grcalcf by \sfactor
   \advance \grcalcb by -\tfactor
   \qbezier(\grcalca,\grcalcd)(\grcalca,\grcalce)(\grcalcb,\grcalcf) 
   \advance \grcolumn by 2}
\newcommand{\gsy}{  %symmetric braiding
   \grcalca = \grcolumn
   \multiply \grcalca by \factor
   \advance \grcalca by \hfactor
   \grcalcb = \grcalca
   \advance \grcalcb by \hfactor
   \grcalcc = \grcalca
   \advance \grcalcc by \factor
   \grcalcd = \grrow
   \multiply \grcalcd by \factor
   \grcalce = \grcalcd
   \advance \grcalce by -\tfactor
   \grcalcf = \grcalcd
   \advance \grcalcf by -\hfactor
   \grcalcg = \grcalce
   \advance \grcalcg by -\tfactor
   \grcalch = \grcalcd
   \advance \grcalch by -\factor
   \qbezier(\grcalcc,\grcalcd)(\grcalcc,\grcalce)(\grcalcb,\grcalcf) 
   \qbezier(\grcalcb,\grcalcf)(\grcalca,\grcalcg)(\grcalca,\grcalch) 
   \advance \grcalcf by -\dfactor
   \advance \grcalcb by \sfactor
   \qbezier(\grcalcc,\grcalch)(\grcalcc,\grcalcg)(\grcalcb,\grcalcf) 
   %\advance \grcalcf by \sfactor
   %\advance \grcalcb by -\tfactor
   \qbezier(\grcalca,\grcalcd)(\grcalca,\grcalce)(\grcalcb,\grcalcf) 
   \advance \grcolumn by 2}
 \newcommand{\gbrc}{ %circled braiding
   \grcalca = \grcolumn
   \multiply \grcalca by \factor
   \advance \grcalca by \hfactor
   \grcalcb = \grcalca
   \advance \grcalcb by \hfactor
   \grcalcc = \grcalca
   \advance \grcalcc by \factor
   \grcalcd = \grrow
   \multiply \grcalcd by \factor
   \grcalce = \grcalcd
   \advance \grcalce by -\tfactor
   \grcalcf = \grcalcd
   \advance \grcalcf by -\hfactor
   \grcalcg = \grcalce
   \advance \grcalcg by -\tfactor
   \grcalch = \grcalcd
   \advance \grcalch by -\factor
   \put(\grcalcb,\grcalcf){\circle{\hfactor}}
   \qbezier(\grcalca,\grcalcd)(\grcalca,\grcalce)(\grcalcb,\grcalcf) 
   \qbezier(\grcalcb,\grcalcf)(\grcalcc,\grcalcg)(\grcalcc,\grcalch) 
   \advance \grcalcf by -\dfactor
   \advance \grcalcb by -\sfactor
   \qbezier(\grcalca,\grcalch)(\grcalca,\grcalcg)(\grcalcb,\grcalcf) 
   \advance \grcalcf by \sfactor
   \advance \grcalcb by \tfactor
   \qbezier(\grcalcc,\grcalcd)(\grcalcc,\grcalce)(\grcalcb,\grcalcf) 
   \advance \grcolumn by 2}
 \newcommand{\gibrc}{ %circled inverse braiding
   \grcalca = \grcolumn
   \multiply \grcalca by \factor
   \advance \grcalca by \hfactor
   \grcalcb = \grcalca
   \advance \grcalcb by \hfactor
   \grcalcc = \grcalca
   \advance \grcalcc by \factor
   \grcalcd = \grrow
   \multiply \grcalcd by \factor
   \grcalce = \grcalcd
   \advance \grcalce by -\tfactor
   \grcalcf = \grcalcd
   \advance \grcalcf by -\hfactor
   \grcalcg = \grcalce
   \advance \grcalcg by -\tfactor
   \grcalch = \grcalcd
   \advance \grcalch by -\factor
   \put(\grcalcb,\grcalcf){\circle{\hfactor}}
   \qbezier(\grcalcc,\grcalcd)(\grcalcc,\grcalce)(\grcalcb,\grcalcf) 
   \qbezier(\grcalcb,\grcalcf)(\grcalca,\grcalcg)(\grcalca,\grcalch) 
   \advance \grcalcf by -\dfactor
   \advance \grcalcb by \sfactor
   \qbezier(\grcalcc,\grcalch)(\grcalcc,\grcalcg)(\grcalcb,\grcalcf) 
   \advance \grcalcf by \sfactor
   \advance \grcalcb by -\tfactor
   \qbezier(\grcalca,\grcalcd)(\grcalca,\grcalce)(\grcalcb,\grcalcf) 
   \advance \grcolumn by 2}
 \newcommand{\gu}[1]{ %unit [length]
   \grcalca = \grcolumn
   \multiply \grcalca by \factor
   \grcalcd = \hfactor
   \multiply \grcalcd by #1
   \advance \grcalca by \grcalcd
   \grcalcb = \grrow
   \advance \grcalcb by -1
   \multiply \grcalcb by \factor
   \put(\grcalca,\grcalcb) {\line(0,1){\hfactor}} 
   \advance \grcalcb by \hfactor
   \put(\grcalca,\grcalcb) {\circle*{3}}
   \advance \grcolumn by #1}
 \newcommand{\gcu}[1]{ %counit [length]
   \grcalca = \grcolumn
   \multiply \grcalca by \factor
   \grcalcd = \hfactor
   \multiply \grcalcd by #1
   \advance \grcalca by \grcalcd
   \grcalcb = \grrow
   \multiply \grcalcb by \factor
   \put(\grcalca,\grcalcb) {\line(0,-1){\hfactor}} 
   \advance \grcalcb by -\hfactor
   \put(\grcalca,\grcalcb) {\circle*{3}}
   \advance \grcolumn by #1}
 \newcommand{\gmp}[1]{ %morphism [f]
   \grcalca = \grcolumn
   \multiply \grcalca by \factor
   \advance \grcalca by \hfactor
   \grcalcb = \grrow
   \multiply \grcalcb by \factor
   \put(\grcalca,\grcalcb) {\line(0,-1){\dfactor}} 
   \advance \grcalcb by -\factor
   \put(\grcalca,\grcalcb) {\line(0,1){\dfactor}} 
   \advance \grcalcb by \hfactor
   \grcalcc = \factor
   \advance \grcalcc by -\qfactor
   \put(\grcalca,\grcalcb) {\circle{\grcalcc}}
   \put(\grcalca,\grcalcb) {\makebox(0,0){$\scriptstyle #1$}}
   \advance \grcolumn by 1}
\newcommand{\gmpcu}[1]{ %linear map to I
   \grcalca = \grcolumn
   \multiply \grcalca by \factor
   \advance \grcalca by \hfactor
   \grcalcb = \grrow
   \multiply \grcalcb by \factor
   \put(\grcalca,\grcalcb) {\line(0,-1){\dfactor}} 
   \advance \grcalcb by -\factor
   %\put(\grcalca,\grcalcb) {\line(0,1){\dfactor}} 
   \advance \grcalcb by \hfactor
   \grcalcc = \factor
   \advance \grcalcc by -\qfactor
   \put(\grcalca,\grcalcb) {\circle{\grcalcc}}
   \put(\grcalca,\grcalcb) {\makebox(0,0){$\scriptstyle #1$}}
   \advance \grcolumn by 1}
\newcommand{\gmpu}[1]{ %linear map from I
   \grcalca = \grcolumn
   \multiply \grcalca by \factor
   \advance \grcalca by \hfactor
   \grcalcb = \grrow
   \multiply \grcalcb by \factor
   %\put(\grcalca,\grcalcb) {\line(0,-1){\dfactor}} 
   \advance \grcalcb by -\factor
   \put(\grcalca,\grcalcb) {\line(0,1){\dfactor}} 
   \advance \grcalcb by \hfactor
   \grcalcc = \factor
   \advance \grcalcc by -\qfactor
   \put(\grcalca,\grcalcb) {\circle{\grcalcc}}
   \put(\grcalca,\grcalcb) {\makebox(0,0){$\scriptstyle #1$}}
   \advance \grcolumn by 1}  
\newcommand{\gmpcub}[1]{ %big linear map to I
   \grcalca = \grcolumn
   \multiply \grcalca by \factor
   \advance \grcalca by \hfactor
   \grcalcb = \grrow
   \multiply \grcalcb by \factor
   \put(\grcalca,\grcalcb) {\line(0,-1){\dfactor}} 
   \advance \grcalcb by -\factor
   %\put(\grcalca,\grcalcb) {\line(0,1){\dfactor}} 
   \advance \grcalcb by \hfactor
   \grcalcc = \Dfactor
   \advance \grcalcc by -\hfactor
	 \advance \grcalcc by -\qfactor
	 \advance \grcalcb by -\qfactor
	 \advance \grcalcb by \dfactor
   \put(\grcalca,\grcalcb) {\circle{\grcalcc}}
   \put(\grcalca,\grcalcb) {\makebox(0,0){$\scriptstyle #1$}}
   \advance \grcolumn by 1}
	\newcommand{\gmpub}[1]{ %big linear map from I
   \grcalca = \grcolumn
   \multiply \grcalca by \factor
   \advance \grcalca by \hfactor
   \grcalcb = \grrow
   \multiply \grcalcb by \factor
   %\put(\grcalca,\grcalcb) {\line(0,-1){\dfactor}} 
   \advance \grcalcb by -\factor
   \put(\grcalca,\grcalcb) {\line(0,1){\dfactor}} 
   \advance \grcalcb by \hfactor
   \grcalcc = \factor
   \advance \grcalcc by -\sfactor
	 %\advance \grcalcc by -\tfactor
	 %\advance \grcalcb by -\tfactor
   \put(\grcalca,\grcalcb) {\circle{\grcalcc}}
   \put(\grcalca,\grcalcb) {\makebox(0,0){$\scriptstyle #1$}}
   \advance \grcolumn by 1}  
 \newcommand{\gbmp}[1]{ %squared map
   \grcalca = \grcolumn
   \multiply \grcalca by \factor
   \advance \grcalca by \hfactor
   \grcalcb = \grrow
   \multiply \grcalcb by \factor
   \put(\grcalca,\grcalcb) {\line(0,-1){\dfactor}} 
   \advance \grcalcb by -\factor
   \put(\grcalca,\grcalcb) {\line(0,1){\dfactor}} 
   \advance \grcalca by -\hfactor
   \advance \grcalca by \dfactor
   \advance \grcalcb by \dfactor
   \grcalcc = \factor
   \advance \grcalcc by -\sfactor
   \put(\grcalca,\grcalcb) {\framebox(\grcalcc,\grcalcc){$\scriptstyle #1$}}
   \advance \grcolumn by 1}
 \newcommand{\gbmpt}[1]{
   \grcalca = \grcolumn
   \multiply \grcalca by \factor
   \advance \grcalca by \hfactor
   \grcalcb = \grrow
   \multiply \grcalcb by \factor
   \put(\grcalca,\grcalcb) {\line(0,-1){\dfactor}} 
   \advance \grcalcb by -\factor
%   \put(\grcalca,\grcalcb) {\line(0,1){\dfactor}} 
   \advance \grcalca by -\hfactor
   \advance \grcalca by \dfactor
   \advance \grcalcb by \dfactor
   \grcalcc = \factor
   \advance \grcalcc by -\sfactor
   \put(\grcalca,\grcalcb) {\framebox(\grcalcc,\grcalcc){$\scriptstyle #1$}}
   \advance \grcolumn by 1}
 \newcommand{\gbmpb}[1]{
   \grcalca = \grcolumn
   \multiply \grcalca by \factor
   \advance \grcalca by \hfactor
   \grcalcb = \grrow
   \multiply \grcalcb by \factor
%   \put(\grcalca,\grcalcb) {\line(0,-1){\dfactor}} 
   \advance \grcalcb by -\factor
   \put(\grcalca,\grcalcb) {\line(0,1){\dfactor}} 
   \advance \grcalca by -\hfactor
   \advance \grcalca by \dfactor
   \advance \grcalcb by \dfactor
   \grcalcc = \factor
   \advance \grcalcc by -\sfactor
   \put(\grcalca,\grcalcb) {\framebox(\grcalcc,\grcalcc){$\scriptstyle #1$}}
   \advance \grcolumn by 1}
 \newcommand{\gbmpn}[1]{
   \grcalca = \grcolumn
   \multiply \grcalca by \factor
   \advance \grcalca by \hfactor
   \grcalcb = \grrow
   \multiply \grcalcb by \factor
%   \put(\grcalca,\grcalcb) {\line(0,-1){\dfactor}} 
   \advance \grcalcb by -\factor
%   \put(\grcalca,\grcalcb) {\line(0,1){\dfactor}} 
   \advance \grcalca by -\hfactor
   \advance \grcalca by \dfactor
   \advance \grcalcb by \dfactor
   \grcalcc = \factor
   \advance \grcalcc by -\sfactor
   \put(\grcalca,\grcalcb) {\framebox(\grcalcc,\grcalcc){$\scriptstyle #1$}}
   \advance \grcolumn by 1}
 \newcommand{\glmptb}{ % a kind of box without the right side    
   \grcalca = \grcolumn
   \multiply \grcalca by \factor
   \advance \grcalca by \hfactor
   \grcalcb = \grrow
   \multiply \grcalcb by \factor
   \put(\grcalca,\grcalcb) {\line(0,-1){\dfactor}} 
   \advance \grcalcb by -\factor
   \put(\grcalca,\grcalcb) {\line(0,1){\dfactor}} 
   \advance \grcalca by -\hfactor
   \advance \grcalca by \dfactor
   \advance \grcalcb by \dfactor
   \put(\grcalca,\grcalcb) {\line(1,0){\factor}} 
   \advance \grcalcb by \factor
   \advance \grcalcb by -\sfactor
   \put(\grcalca,\grcalcb) {\line(1,0){\factor}} 
   \grcalcc = \factor
   \advance \grcalcc by -\sfactor
   \put(\grcalca,\grcalcb) {\line(0,-1){\grcalcc}} 
   \advance \grcolumn by 1}
 \newcommand{\glmpt}{    
   \grcalca = \grcolumn
   \multiply \grcalca by \factor
   \advance \grcalca by \hfactor
   \grcalcb = \grrow
   \multiply \grcalcb by \factor
   \put(\grcalca,\grcalcb) {\line(0,-1){\dfactor}} 
   \advance \grcalca by -\hfactor
   \advance \grcalca by \dfactor
   \advance \grcalcb by -\dfactor
   \put(\grcalca,\grcalcb) {\line(1,0){\factor}} 
   \advance \grcalcb by -\factor
   \advance \grcalcb by \sfactor
   \put(\grcalca,\grcalcb) {\line(1,0){\factor}} 
   \grcalcc = \factor
   \advance \grcalcc by -\sfactor
   \put(\grcalca,\grcalcb) {\line(0,1){\grcalcc}} 
   \advance \grcolumn by 1}
 \newcommand{\glmpb}{    
   \grcalca = \grcolumn
   \multiply \grcalca by \factor
   \advance \grcalca by \hfactor
   \grcalcb = \grrow
   \multiply \grcalcb by \factor
   \advance \grcalcb by -\factor
   \put(\grcalca,\grcalcb) {\line(0,1){\dfactor}} 
   \advance \grcalca by -\hfactor
   \advance \grcalca by \dfactor
   \advance \grcalcb by \dfactor
   \put(\grcalca,\grcalcb) {\line(1,0){\factor}} 
   \advance \grcalcb by \factor
   \advance \grcalcb by -\sfactor
   \put(\grcalca,\grcalcb) {\line(1,0){\factor}} 
   \grcalcc = \factor
   \advance \grcalcc by -\sfactor
   \put(\grcalca,\grcalcb) {\line(0,-1){\grcalcc}} 
   \advance \grcolumn by 1}
 \newcommand{\glmp}{    
   \grcalca = \grcolumn
   \multiply \grcalca by \factor
   \advance \grcalca by \dfactor
   \grcalcb = \grrow
   \multiply \grcalcb by \factor
   \advance \grcalcb by -\dfactor
   \put(\grcalca,\grcalcb) {\line(1,0){\factor}} 
   \advance \grcalcb by -\factor
   \advance \grcalcb by \sfactor
   \put(\grcalca,\grcalcb) {\line(1,0){\factor}} 
   \grcalcc = \factor
   \advance \grcalcc by -\sfactor
   \put(\grcalca,\grcalcb) {\line(0,1){\grcalcc}} 
   \advance \grcolumn by 1}
 \newcommand{\gcmptb}{    
   \grcalca = \grcolumn
   \multiply \grcalca by \factor
   \advance \grcalca by \hfactor
   \grcalcb = \grrow
   \multiply \grcalcb by \factor
   \put(\grcalca,\grcalcb) {\line(0,-1){\dfactor}} 
   \advance \grcalcb by -\factor
   \put(\grcalca,\grcalcb) {\line(0,1){\dfactor}} 
   \advance \grcalca by -\hfactor
   \advance \grcalcb by \dfactor
   \put(\grcalca,\grcalcb) {\line(1,0){\factor}} 
   \advance \grcalcb by \factor
   \advance \grcalcb by -\sfactor
   \put(\grcalca,\grcalcb) {\line(1,0){\factor}} 
   \advance \grcolumn by 1}
 \newcommand{\gcmpt}{    
   \grcalca = \grcolumn
   \multiply \grcalca by \factor
   \advance \grcalca by \hfactor
   \grcalcb = \grrow
   \multiply \grcalcb by \factor
   \put(\grcalca,\grcalcb) {\line(0,-1){\dfactor}} 
   \advance \grcalcb by -\factor
   \advance \grcalca by -\hfactor
   \advance \grcalcb by \dfactor
   \put(\grcalca,\grcalcb) {\line(1,0){\factor}} 
   \advance \grcalcb by \factor
   \advance \grcalcb by -\sfactor
   \put(\grcalca,\grcalcb) {\line(1,0){\factor}} 
   \advance \grcolumn by 1}
 \newcommand{\gcmpb}{    
   \grcalca = \grcolumn
   \multiply \grcalca by \factor
   \advance \grcalca by \hfactor
   \grcalcb = \grrow
   \multiply \grcalcb by \factor
   \advance \grcalcb by -\factor
   \put(\grcalca,\grcalcb) {\line(0,1){\dfactor}} 
   \advance \grcalca by -\hfactor
   \advance \grcalcb by \dfactor
   \put(\grcalca,\grcalcb) {\line(1,0){\factor}} 
   \advance \grcalcb by \factor
   \advance \grcalcb by -\sfactor
   \put(\grcalca,\grcalcb) {\line(1,0){\factor}} 
   \advance \grcolumn by 1}
 \newcommand{\gcmp}{    
   \grcalca = \grcolumn
   \multiply \grcalca by \factor
   \grcalcb = \grrow
   \multiply \grcalcb by \factor
   \advance \grcalcb by -\factor
   \advance \grcalcb by \dfactor
   \put(\grcalca,\grcalcb) {\line(1,0){\factor}} 
   \advance \grcalcb by \factor
   \advance \grcalcb by -\sfactor
   \put(\grcalca,\grcalcb) {\line(1,0){\factor}} 
   \advance \grcolumn by 1}
 \newcommand{\grmptb}{    
   \grcalca = \grcolumn
   \multiply \grcalca by \factor
   \advance \grcalca by \hfactor
   \grcalcb = \grrow
   \multiply \grcalcb by \factor
   \put(\grcalca,\grcalcb) {\line(0,-1){\dfactor}} 
   \advance \grcalcb by -\factor
   \put(\grcalca,\grcalcb) {\line(0,1){\dfactor}} 
   \advance \grcalca by \hfactor
   \advance \grcalca by -\dfactor
   \advance \grcalcb by \dfactor
   \put(\grcalca,\grcalcb) {\line(-1,0){\factor}} 
   \advance \grcalcb by \factor
   \advance \grcalcb by -\sfactor
   \put(\grcalca,\grcalcb) {\line(-1,0){\factor}} 
   \grcalcc = \factor
   \advance \grcalcc by -\sfactor
   \put(\grcalca,\grcalcb) {\line(0,-1){\grcalcc}} 
   \advance \grcolumn by 1}
 \newcommand{\grmpt}{    
   \grcalca = \grcolumn
   \multiply \grcalca by \factor
   \advance \grcalca by \hfactor
   \grcalcb = \grrow
   \multiply \grcalcb by \factor
   \put(\grcalca,\grcalcb) {\line(0,-1){\dfactor}} 
   \advance \grcalca by \hfactor
   \advance \grcalca by -\dfactor
   \advance \grcalcb by -\dfactor
   \put(\grcalca,\grcalcb) {\line(-1,0){\factor}} 
   \advance \grcalcb by -\factor
   \advance \grcalcb by \sfactor
   \put(\grcalca,\grcalcb) {\line(-1,0){\factor}} 
   \grcalcc = \factor
   \advance \grcalcc by -\sfactor
   \put(\grcalca,\grcalcb) {\line(0,1){\grcalcc}} 
   \advance \grcolumn by 1}
 \newcommand{\grmpb}{    
   \grcalca = \grcolumn
   \multiply \grcalca by \factor
   \advance \grcalca by \hfactor
   \grcalcb = \grrow
   \multiply \grcalcb by \factor
   \advance \grcalcb by -\factor
   \put(\grcalca,\grcalcb) {\line(0,1){\dfactor}} 
   \advance \grcalca by \hfactor
   \advance \grcalca by -\dfactor
   \advance \grcalcb by \dfactor
   \put(\grcalca,\grcalcb) {\line(-1,0){\factor}} 
   \advance \grcalcb by \factor
   \advance \grcalcb by -\sfactor
   \put(\grcalca,\grcalcb) {\line(-1,0){\factor}} 
   \grcalcc = \factor
   \advance \grcalcc by -\sfactor
   \put(\grcalca,\grcalcb) {\line(0,-1){\grcalcc}} 
   \advance \grcolumn by 1}
 \newcommand{\grmp}{    
   \grcalca = \grcolumn
   \multiply \grcalca by \factor
   \advance \grcalca by \factor
   \advance \grcalca by -\dfactor
   \grcalcb = \grrow
   \multiply \grcalcb by \factor
   \advance \grcalcb by -\dfactor
   \put(\grcalca,\grcalcb) {\line(-1,0){\factor}} 
   \advance \grcalcb by -\factor
   \advance \grcalcb by \sfactor
   \put(\grcalca,\grcalcb) {\line(-1,0){\factor}} 
   \grcalcc = \factor
   \advance \grcalcc by -\sfactor
   \put(\grcalca,\grcalcb) {\line(0,1){\grcalcc}} 
   \advance \grcolumn by 1}
 \newcommand{\gwmuh}[3]{    
   \grcalca = \grcolumn
   \multiply \grcalca by \factor
   \grcalcb = #2
   \advance \grcalcb by #3
   \multiply \grcalcb by \qfactor
   \advance \grcalca by \grcalcb
   \grcalcb = \grrow
   \multiply \grcalcb by \factor
   \grcalcc = #3
   \advance \grcalcc by -#2
   \multiply \grcalcc by \hfactor
   \grcalcd = \factor
   \advance \grcalcd by \hfactor
   \put(\grcalca,\grcalcb){\oval(\grcalcc,\grcalcd)[b]}
   \grcalca = \grcolumn
   \multiply \grcalca by \factor
   \grcalcc = #1
   \multiply \grcalcc by \hfactor
   \advance \grcalca by \grcalcc
   \advance \grcalcb by -\hfactor
   \advance \grcalcb by -\qfactor
   \put(\grcalca,\grcalcb) {\line(0,-1){\qfactor}} 
   \advance \grcolumn by #1}
 \newcommand{\gwcmh}[3]{   
   \grcalca = \grcolumn
   \multiply \grcalca by \factor
   \grcalcb = #2
   \advance \grcalcb by #3
   \multiply \grcalcb by \qfactor
   \advance \grcalca by \grcalcb
   \grcalcb = \grrow
   \advance \grcalcb by -1
   \multiply \grcalcb by \factor
   \grcalcc = #3
   \advance \grcalcc by -#2
   \multiply \grcalcc by \hfactor
   \grcalcd = \factor
   \advance \grcalcd by \hfactor
   \put(\grcalca,\grcalcb){\oval(\grcalcc,\grcalcd)[t]}
   \grcalca = \grcolumn
   \multiply \grcalca by \factor
   \grcalcc = #1
   \multiply \grcalcc by \hfactor
   \advance \grcalca by \grcalcc
   \advance \grcalcb by \factor
   \put(\grcalca,\grcalcb) {\line(0,-1){\qfactor}} 
   \advance \grcolumn by #1}
 \newcommand{\gsbox}[1]{
   \grcalca = \grcolumn
   \multiply \grcalca by \factor
   \grcalcb = \grrow
   \multiply \grcalcb by \factor
   \advance \grcalcb by -\factor
   \grcalcc = #1
   \multiply \grcalcc by \factor
   \grcalcd = \factor
   \put(\grcalca,\grcalcb){\framebox(\grcalcc,\grcalcd){}}}
\DeclareMathSizes{\@xpt}{\@xpt}{6}{5}																																	%%
\def\namedlabel#1#2{\begingroup
    #2%
    \def\@currentlabel{#2}%
    \phantomsection\label{#1}\endgroup
}
\newenvironment{invisible}[1][\unskip]{
	\noindent
	\color{red}
	[{\textbf{\color{blue}TBH}: \textit{#1}}
}{]}
\newenvironment{personal}{{\noindent \textbf{\color{green}Personal notes (To Be Hidden)}:\quad}\color{green}}{}
\theoremstyle{plain}
\newtheorem{theorem}{Theorem}[section]
\newtheorem{lemma}[theorem]{Lemma}
\newtheorem{proposition}[theorem]{Proposition}
\newtheorem{corollary}[theorem]{Corollary}
\newtheorem{lemma*}[theorem]{Lemma}
\newtheorem{proposition*}[theorem]{Proposition}
\theoremstyle{definition}
\newtheorem{definition}[theorem]{Definition}
\newtheorem{remark}[theorem]{Remark}
\newtheorem{notation}[theorem]{Notation}
\newcommand{\K}{\Bbbk}
\newcommand{\varfun}[3]{#1 \colon #2 \to #3}
\newcommand{\coinv}[2]{{#1}^{\mathrm{co}{#2}}} % coinvariant elements
\newcommand{\mf}[1]{\mathfrak{#1}} % mathfrak
\newcommand{\mb}[1]{\boldsymbol{#1}} % math bold symbol
\newcommand{\micro}[1]{{\scriptscriptstyle  #1}}
\newcommand{\forget}{\mb{\upomega}} % previously it was \cU
\newcommand{\frho}{\delta} % functorial coaction
\newcommand{\Nat}{\mathsf{Nat}} % natural transformations
\newcommand{\coend}{\underline{\mathrm{coend}}} % coend
\renewcommand{\ker}{\mathsf{ker}} % kernel of a morphism
\newcommand{\coker}{\mathsf{coker}} % cokernel of a morphism
\newcommand{\id}{\mathsf{Id}} % identity morphism
\newcommand{\Hom}[6]{{_{#1}^{#2}\mathsf{Hom}_{#3}^{#4}}\left({#5},{#6}\right)} % \Hom{[dl]}{[ul]}{[dr]}{[ur]}{[from]}{[to]}
\newcommand{\Homk}{\mathsf{Hom}} % k-linear morphism
\newcommand{\End}[2]{\mathsf{End}_{#1}(#2)} % endomorphisms
\newcommand{\fvect}{{\M_{f}}} % finite-dimensional vector spaces or finitely generated k-modules
\newcommand{\Set}{{\mathsf{Set}}} % category of (small) sets
\newcommand{\cC}{{\mathcal C}}
\newcommand{\cD}{{\mathcal D}}
\newcommand{\cE}{{\mathcal E}}
\newcommand{\cF}{{\mathcal F}}
\newcommand{\cG}{{\mathcal G}}
\newcommand{\cI}{{\mathcal I}}
\newcommand{\cL}{{\mathcal L}}
\newcommand{\cM}{{\mathcal M}}
\newcommand{\cP}{{\mathcal P}}
\newcommand{\cR}{{\mathcal R}}
\newcommand{\cU}{{\mathcal U}}
\newcommand{\cV}{{\mathcal V}}
\newcommand{\M}{\mathfrak{M}} % Monoidal category of k-modules
\newcommand{\I}{\mathbb{I}} % unit
\newcommand{\calpha}{\mf{a}}%{\upalpha} % associativity constraint
\newcommand{\clambda}{\mf{l}}%{\uplambda} % left unit constraint
\newcommand{\crho}{\mf{r}}%{\uprho} % right unit constraint
\newcommand{\ev}[1]{\mathsf{ev}_{{{#1}}}} % evaluation
\newcommand{\db}[1]{\mathsf{db}_{{{#1}}}} % dual bssis
\newcommand{\ie}{i.e.~}
\newcommand{\eg}{e.g.~}
\title{Coquasi-bialgebras with preantipode and rigid monoidal categories}
\author{Paolo Saracco}
\address{D\'epartement de Math\'ematique, Universit\'e Libre de Bruxelles, Boulevard du Triomphe, B-1050 Brussels, Belgium.}
\thanks{This paper was written while the author was member of the "National Group for Algebraic and Geometric Structures and their Applications" (GNSAGA-INdAM). The author is sincerely grateful to Alessandro Ardizzoni and Claudia Menini for their contribution.}
\keywords{Coquasi-bialgebra, preantipode, coquasi-Hopf algebra, rigid monoidal category, tensor functor, reconstruction.}
\subjclass[2010]{18D10 (16T15)}
\urladdr{sites.google.com/site/paolosaracco}
\email{paolo.saracco@ulb.ac.be}
\begin{document}

\begin{abstract}
By a theorem of Majid, every monoidal category with a neutral quasi-monoidal functor to finitely-generated and projective $\K$-modules gives rise to a coquasi-bialgebra. We prove that if the category is also rigid, then the associated coquasi-bialgebra admits a preantipode, providing in this way an analogue for coquasi-bialgebras of Ulbrich's reconstruction theorem for Hopf algebras. When $\K$ is field, this allows us to characterize coquasi-Hopf algebras as well in terms of rigidity of finite-dimensional corepresentations.
\end{abstract}

\maketitle

%\tableofcontents

\begin{invisible}
\textbf{This version is a new starting point after rejection from Canadian. It may not contain the previous invisible sections and it contains some minor corrections with respect to the previous versions. Refer to them for the old invisible stuff.}
\end{invisible}

\section*{Introduction}

A well-known result in the theory of Hopf algebras states that one can reconstruct, in a suitable way, a Hopf algebra from its category of finite-dimensional corepresentations. In details, if $\cC $ is a $\Bbbk$-linear, abelian, rigid symmetric monoidal category which is essentially small, and if $\forget:\cC\rightarrow \fvect $ is a $\Bbbk $-linear, exact, faithful, monoidal functor, then there exists a commutative Hopf algebra $H$, unique up to isomorphism, such that $\forget$ factorizes through an equivalence of categories $\forget^{H}:\cC \rightarrow {\M_f ^{H}}$ followed by the forgetful functor; in fact $H$ represents the functor 
\begin{equation*}
R\rightarrow \mathrm{Aut}^{\otimes}(\forget\otimes R)
\end{equation*}
which associates any commutative $\Bbbk$-algebra $R$ with the group of monoidal natural automorphisms of $\forget\otimes R:\cC\rightarrow \mathrm{Mod}_R$ sending $X$ to $\forget(X)\otimes R$, see \cite{Riv},  \cite{DeligneTannaka} and \cite{JoyalStreet}. In particular, if $\cC $ is already the category of finite-dimensional right comodules over a commutative Hopf algebra $A$, then one can show that $A\cong H$ as Hopf algebras. In \cite{Ul}, Ulbrich showed that even in case the symmetry condition is dropped, it is still possible to construct an associated Hopf algebra $H$.

In \cite{Majid-reconstruction}, Majid extended this result to coquasi-bialgebras (or dual quasi-bialgebras), proving that if $\cC $ is an essentially small monoidal category endowed with a functor $\forget:\cC \rightarrow \fvect $ that respects the tensor product in a suitable way (but that is not necessarily monoidal), then there is a coquasi-bialgebra $H$ such that $\forget$ factorizes through a monoidal functor $\forget^{H}:\cC  \rightarrow {^{H}\fvect } $ followed by the forgetful functor.

In \cite{Ardi-Pava}, Ardizzoni and Pavarin introduced preantipodes to characterize those coquasi-bialgebras over a field for which a (suitable) structure theorem for coquasi-Hopf bicomodules holds and in \cite[Theorem 2.6]{Sch-TwoChar} Schauenburg proved (in a non-constructive way) that preantipodes characterize also those coquasi-bialgebras whose category of finite-dimensional comodules is rigid.

Inspired by these results, we are going to show in Section \ref{sec:tannaka} that if $\cC$ is an essentially small right rigid monoidal category together with a quasi-monoidal functor $\forget:\cC \rightarrow \fvect $ to the category of finitely-generated and projective $\K$-modules, then there exists a preantipode for the coendomorphism coquasi-bialgebra $H$ of $\forget$ (Proposition \ref{lemma:pre1}). In particular, this will allow us to reconstruct a coquasi-bialgebra with preantipode from its category of finite-dimensional left comodules, in the spirit of the classical Tannaka-Krein duality. Our approach presents three remarkable advantages. First of all, nowhere we will assume to have an isomorphism between the ``underlying'' $\K$-module $\forget(X^\star)$ of a dual object and the dual $\K$-module $\forget(X)^*$ of the ``underlying'' object (as it is done for example in \cite[\S3]{reconQuasiHopf}). In fact, we will see that if that is the case, then $H$ can be equipped with a coquasi-Hopf algebra structure. Secondly, we will develop our main construction working over a generic commutative ring $\K$ instead of over a field. Thirdly, we will not only show that a preantipode exists, but we will show how to construct it explicitly. 

Then, we will apply this result to show how to recover properties such as the uniqueness of preantipodes and the fact that any coquasi-bialgebra morphism automatically preserves them from their categorical counterparts. We will also recover the characterization of coquasi-bialgebras with preantipode as those coquasi-bialgebras whose category of finite-dimensional corepresentations is rigid and we will characterize coquasi-Hopf algebras as those for which in addition $\forget(-^\star)$ and $\forget(-)^*$ are isomorphic (Theorem \ref{thm:characterization}). In conclusion, we will see in \S\ref{sec:finitedual} how we can endow the finite dual coalgebra of a quasi-bialgebra with preantipode with a structure of coquasi-bialgebra with preantipode.

\section{Coquasi-bialgebras and preantipodes}

We extend here the notion of preantipode as it has been introduced in \cite{Ardi-Pava} to the case of coquasi-bialgebras over a commutative ring. We prove that when it exists, it has to be unique and that coquasi-bialgebra morphisms have to preserve it \emph{under the additional assumption that the coquasi-bialgebras are $\K$-flat}.

\subsection{Monoidal categories and coquasi-bialgebras}\label{Subsec:moncat}

A \emph{monoidal category} is a category $\cC $ endowed with a functor $\otimes :\cC \times \cC \rightarrow \cC $, called the \emph{tensor product}, with a distinguished object $\I $, called the \emph{unit}, and with three natural isomorphisms 
\begin{equation*}
\arraycolsep=10pt\def\arraystretch{1.5}
\begin{array}{cc}
\calpha :\otimes \,(\otimes \times \id _{\cC })\rightarrow \otimes  \,(\id _{\cC }\times \otimes ) & \text{(\emph{associativity constraint})} \\
\clambda :\otimes \, (\I \times \id _{\cC })\rightarrow \id _{\cC }, \quad \crho :\otimes \, (\id _{\cC }\times \I )\rightarrow \id _{\cC } & \text{(\emph{left and right unit constraints})}
\end{array}
\end{equation*}
that satisfy the \emph{Pentagon} and the \emph{Triangle Axioms}, that is, for all $X,Y,Z,W$ in $\cC$
\begin{gather*}
\left(X\otimes \calpha_{Y,Z,W}\right) \circ \calpha_{X,Y \otimes Z,W} \circ \left( \calpha_{X,Y,Z}\otimes W\right) = \calpha_{X,Y,Z\otimes W}\circ\calpha_{X\otimes Y,Z,W}, \\
\left( X\otimes \clambda_Y\right) \circ \calpha_{X,\I,Y} = \crho_X\otimes Y.
\end{gather*}
A \emph{quasi-monoidal functor} between $\left( \cC ,\otimes ,\I ,\calpha
,\clambda ,\crho \right) $ and $\left( \cC ^{\prime },\otimes^{\prime} ,\I^{\prime}
,\calpha ^{\prime },\clambda ^{\prime },\crho ^{\prime }\right) $ is a functor $
\forget:\cC \rightarrow \cC ^{\prime }$ together with
an isomorphism $\varphi _{0}:\I^{\prime}\rightarrow \forget\left( 
\I \right) $ and a natural isomorphism $\varphi=\left(\varphi _{X,Y}:\forget
\left( X\right) \otimes^{\prime} \forget\left( Y\right) \rightarrow 
\forget\left( X\otimes Y\right)\right)_{X,Y\in\cC} $ in $\cC ^{\prime }$. Omitting the composition symbols, a quasi-monoidal functor $\forget$ is said to be \emph{neutral} if
\begin{equation}\label{eq:unitfunctor}
\forget\left( \clambda_X \right)  \varphi_{\I,X}  \left( \varphi_0\otimes^{\prime} \forget(X)\right) = \clambda^{\prime}_{\forget(X)}, \quad \forget\left( \crho_X \right)  \varphi_{X,\I}  \left( \forget(X) \otimes^{\prime} \varphi_0 \right) = \crho^{\prime}_{\forget(X)}
\end{equation}
for all $X$ in $\cC$. Furthermore, $\forget$ is said to be \emph{monoidal}\footnote{In \cite[Definition 3.5]{Aguiar}, these are called \emph{strong} monoidal functors.} if 
\begin{equation}\label{eq:assfunctor}
\forget ( \calpha_{X,Y,Z}  )  \varphi_{X\otimes Y,Z}   ( \varphi_{X,Y} \otimes^{\prime} \forget(Z)  ) = \varphi_{X,Y\otimes Z}   (\forget(X) \otimes^{\prime} \varphi_{Y,Z}  )  \calpha^{\prime}_{\forget(X), \forget(Y), \forget(Z)}
\end{equation}
for all $X,Y,Z$ in $\cC$. It is said to be \emph{strict} if $\varphi_0$ and $\varphi$ are the identities.

The notions of (co)algebra and (co)module over a (co)algebra can be introduced in the general setting of monoidal categories (see e.g. \cite[\S1.2]{Aguiar}, where (co)algebras are called (co)monoids). Given an algebra $A$ in $\cC $, one can define the categories $_{A} \cC $, $\cC _{A}$ and $_{A}\cC _{A}$ of left, right and two-sided modules over $A$, respectively. Similarly, given a coalgebra $C$ in  $\cC $, one can define the categories of $C$-comodules $^{C}\mathcal{C },\cC ^{C},{^{C}\cC ^{C}}$.

\begin{invisible}
Caveat: if $N\in{^H_{}\M^H_H}$ there is no easy way to prove that $\coinv{N}{H}\in{^H\M}$ if the tensor product $H\otimes-$ does not preserves equalizers! That's why my theory in the quasi case works pretty well even for $\K$ a commutative ring, but Ardi's one doesn't. 
\end{invisible}

Henceforth and unless stated otherwise, we will fix a base commutative ring $\Bbbk$ and we will assume to work in the monoidal category $ \M $ of $\Bbbk $-modules: all (co)algebras will be $\Bbbk $-(co)algebras, the unadorned tensor product $\otimes $ will denote the tensor product over $\Bbbk $ and $\Homk(V,W)$ will be the set of $\Bbbk$-linear morphisms from $V$ to $W$. We will often omit the composition symbols between maps as we did above. In order to deal with comultiplications and coactions, we will use the following variation of \emph{Sweedler's Sigma Notation} (cf. \cite[\S1.2]{Swe})
\begin{equation*}
\Delta (x):=\sum x_{1}\otimes x_{2},\qquad \rho_V^{r}(v):=\sum v_{0}\otimes v_{1}, \qquad \rho_W^{l}(w):=\sum w_{-1}\otimes w_{0}
\end{equation*}
for every coalgebra $C$, right $C$-comodule $V$, left $C$-comodule $W$ and for all $x\in C$, $v\in V$ and $w\in W$. Recall that
\begin{itemize}
\item if $W$ is a left $C$-comodule, then its linear dual $W^\ast:=\Homk(W,\Bbbk)$ is naturally a right $C$-comodule with $\sum f_0\otimes f_1$ uniquely determined by $\sum f_0(w)f_1=\sum w_{-1}f\left(w_0\right)$ for all $w\in W$;
\item if $A$ is an algebra, then $\Homk(C,A)$ is a monoid with composition law defined by $(f*g)(x)=\sum f(x_1)g(x_2)$ for all $f,g\in\Homk(C,A)$ and $x\in C$ (the \emph{convolution product}) and if $M$ is an $A$-bimodule then we may as well consider $(f*\phi*g)(x)=\sum f(x_1)\cdot \phi(x_2)\cdot g(x_3)$ for all $f,g\in\Homk(C,A)$, $\phi\in\Homk(C,M)$ and $x\in C$.
\end{itemize}

The following result is formally dual to \cite[Theorem 1]{ArdiBulacuMenini} and has already been mentioned in \cite[\S2.3]{Schauenburg-Kac}. Since the proof is quite long, technical and not of particular interest, it is omitted.

\begin{proposition}\label{prop:coquasi}
For a coalgebra $(C,\Delta,\varepsilon)$ there is a bijective correspondence between
\begin{itemize}
\item monoidal structures on ${^C\M}$ such that the underlying functor $\cU:{^C\M}\to\M$ is quasi-monoidal;
\item sets of morphisms $\left\{m,u,\omega,l,r\right\}$ such that $\omega :C\otimes
C\otimes C\rightarrow \Bbbk $, $l,r:C\to\K$ are convolution invertible linear maps, $m:C\otimes C \to C,u:\K\to C$ are coalgebra morphisms and
\begin{gather}
\omega \left( C\otimes C\otimes m\right) \ast \omega \left( m\otimes
C\otimes C\right) =\left( \varepsilon \otimes \omega \right) \ast \omega
\left( C\otimes m\otimes C\right) \ast \left( \omega \otimes \varepsilon
\right) ,  \label{eq:3-cocycle} \\
\omega \left( C\otimes u\otimes C\right) =r^{-1} \otimes l , \quad m\left( u\otimes C\right)*l = l*C , \quad m\left( C\otimes u\right)*r = r*C, \label{eq:quasi-unitairity cocycle} \\
m\left( C\otimes m\right) \ast \omega =\omega \ast m\left( m\otimes C\right). \notag 
\end{gather}
\end{itemize}
\end{proposition}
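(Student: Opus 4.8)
The plan is to exhibit an explicit pair of mutually inverse assignments between the two sides and then to check that the monoidal axioms translate, one by one, into the displayed relations. \emph{From a monoidal structure to the data:} given a monoidal structure $(\otimes,\I,\calpha,\clambda,\crho)$ on ${}^C\M$ together with the isomorphisms $\varphi_0,\varphi$ witnessing that $\cU$ is quasi-monoidal, transport the structure along $\varphi_0,\varphi$ so that, without loss of generality, $\cU(\I)=\K$ and $\cU(V\otimes W)=\cU(V)\otimes\cU(W)$ for all $V,W$ (this replaces the given monoidal structure by an isomorphic one, which carries the same data). Regarding $C$ as a left $C$-comodule via $\Delta$, set
\[
m:=(C\otimes\varepsilon\otimes\varepsilon)\circ\rho^l_{C\otimes C},\qquad u:=\rho^l_{\I}\colon\K\to C,
\]
\[
\omega:=(\varepsilon\otimes\varepsilon\otimes\varepsilon)\circ\calpha^{-1}_{C,C,C},\qquad l:=\varepsilon\circ\clambda_C,\qquad r:=\varepsilon\circ\crho_C,
\]
reading $\calpha^{-1}_{C,C,C},\clambda_C,\crho_C$ via $\cU$ as $\K$-linear automorphisms of $C^{\otimes3}$ and of $C$ respectively.

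\emph{From the data to a monoidal structure:} conversely, given $\{m,u,\omega,l,r\}$ as in the statement, put $V\otimes W:=V\otimes_\K W$ with coaction $v\otimes w\mapsto\sum m(v_{-1}\otimes w_{-1})\otimes v_0\otimes w_0$ and with $f\otimes g$ on morphisms, $\I:=\K$ with coaction $u$, and
\[
\calpha_{V,W,U}\big((v\otimes w)\otimes x\big):=\textstyle\sum\omega^{-1}(v_{-1}\otimes w_{-1}\otimes x_{-1})\,v_0\otimes(w_0\otimes x_0),
\]
\[
\clambda_V(1\otimes v):=\textstyle\sum l(v_{-1})v_0,\qquad\crho_V(v\otimes1):=\textstyle\sum r(v_{-1})v_0.
\]
That these are the only possibilities — hence that the two assignments are mutually inverse — rests on the observation that $C$ (with $\Delta$) is the cofree left $C$-comodule on $\K$, so that the space of left $C$-comodule maps $V\to C$ is naturally isomorphic to $\Homk(V,\K)$ via $f\mapsto\varepsilon f$; therefore any left $C$-coaction on $\cU(V)\otimes\cU(W)$ natural in $(V,W)$, and likewise the constraints, are determined by their values on tensor powers of $C$, and the formulas above are forced. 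Over a general ring one cannot invoke the fundamental theorem of comodules, but this cofreeness statement is elementary and is all that is needed; alternatively one checks directly that the displayed formulas are natural.

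It then remains to match axioms, which after this set-up is a routine Sweedler-notation computation. Coassociativity and counitality of the coaction on $V\otimes_\K W$ for all $V,W$ are equivalent to $\Delta m=(m\otimes m)\Delta_{C\otimes C}$ and $\varepsilon m=\varepsilon\otimes\varepsilon$, i.e. to $m$ being a coalgebra morphism; similarly $\I$ being a comodule is equivalent to $u$ being a coalgebra morphism. That $\calpha$ is a natural isomorphism is equivalent to convolution invertibility of $\omega$, and that each $\calpha_{V,W,U}$ is a comodule morphism is equivalent, on imposing comodule-linearity at $(C,C,C)$ and applying $\varepsilon\otimes\varepsilon\otimes\varepsilon$, to $m(C\otimes m)\ast\omega=\omega\ast m(m\otimes C)$. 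That $\clambda_V,\crho_V$ are comodule morphisms, tested at $V=C$ and composed with $\varepsilon$, gives $m(u\otimes C)\ast l=l\ast C$ and $m(C\otimes u)\ast r=r\ast C$, while their being isomorphisms is convolution invertibility of $l$ and $r$. The Pentagon Axiom, expanded on $((V\otimes W)\otimes U)\otimes T$ with the comodule tails cancelled and $\varepsilon^{\otimes4}$ applied, is precisely the $3$-cocycle identity \eqref{eq:3-cocycle}; and the Triangle Axiom, evaluated at $(C,C)$, composed with $\varepsilon\otimes\varepsilon$ and then convolved with $l^{-1}$, is precisely $\omega(C\otimes u\otimes C)=r^{-1}\otimes l$. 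Reading all these equivalences both ways shows the two assignments are mutually inverse (re-extracting the data from the monoidal structure built out of $\{m,u,\omega,l,r\}$ returns it verbatim by $\varepsilon$-counitality; in the other order one recovers the original monoidal structure up to the isomorphism used above).

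The main obstacle is twofold. First, the naturality/reduction step must be handled with care over an arbitrary commutative ring, where $C$ need not be flat and comodules need not embed into cofree ones: one has to argue through the hom-isomorphism $\Homk_{{}^C\M}(-,C)\cong\Homk(-,\K)$ rather than through any embedding, in order to legitimately reduce the monoidal data to tensor powers of $C$. Second, the Pentagon $\leftrightarrow$ \eqref{eq:3-cocycle} bookkeeping is the computational core; organising it via convolution products of maps in $\Homk(C^{\otimes3},\K)$ and $\Homk(C^{\otimes4},\K)$ keeps it manageable, but it is lengthy. All remaining steps are mechanical, which is why — being formally dual to \cite[Theorem~1]{ArdiBulacuMenini} — the verification is omitted in the text.
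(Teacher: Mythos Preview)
The paper omits the proof entirely, declaring the result formally dual to \cite[Theorem~1]{ArdiBulacuMenini} and the verification ``quite long, technical and not of particular interest''; your sketch is precisely that dualization and matches the intended (unwritten) argument. One small sharpening of the reduction step you correctly flag as delicate: over a general ring the cofreeness isomorphism $\Homk_{{}^C\M}(V,C)\cong\Homk(\cU(V),\K)$ is not quite the right lever for showing the monoidal data are determined by their values on $C$ --- what actually pins the tensor-coaction down is naturality with respect to the unit $\rho_V\colon V\to C\otimes\cU(V)$ of the forgetful--cofree adjunction together with, for each $\K$-module $M$ and each $m\in M$, the comodule map $\iota_m\colon C\to C\otimes M,\ c\mapsto c\otimes m$; these jointly reduce the coaction on any $V\boxtimes W$ to $\rho^\otimes_{C,C}$ elementwise, without any flatness or projectivity hypothesis.
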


A \emph{coquasi-bialgebra} (or \emph{dual quasi-bialgebra}) is a coassociative and counital coalgebra $(H,\Delta ,\varepsilon )$ endowed with a multiplication $m:H\otimes H\rightarrow H$, a unit $u:\Bbbk \rightarrow H$ and three linear maps $\omega :H\otimes H\otimes H\rightarrow \Bbbk $, $l,r:H\to\K$ such that the conditions of Proposition \ref{prop:coquasi} are satisfied. We refer to $\omega $ as the \emph{reassociator} of the coquasi-bialgebra. A \emph{morphism of coquasi-bialgebras} 
\begin{equation*}
f:\left( H,m,u,\Delta ,\varepsilon ,\omega , l ,r\right) \rightarrow \left(H^{\prime },m^{\prime },u^{\prime },\Delta ^{\prime },\varepsilon ^{\prime},\omega ^{\prime }, l', r'\right)
\end{equation*}
is a coalgebra homomorphism $f:\left( H,\Delta ,\varepsilon \right)\rightarrow \left( H^{\prime },\Delta ^{\prime },\varepsilon ^{\prime}\right) $ such that 
\begin{equation*}
m^{\prime }\, (f\otimes f)=f\, m,\qquad f\, u=u^{\prime },\qquad\omega ^{\prime }\, \left( f\otimes f\otimes f\right) =\omega, \qquad l'f=l,\qquad r'f=r .
\end{equation*}
In particular, the category ${^{H} \M }$ of left comodules over a coquasi-bialgebra $H$ comes endowed with a monoidal structure such that the underlying functor $\cU:{^{H} \M }\rightarrow  \M $ is a strict quasi-monoidal functor. Explicitly, given two left $H$-comodules $V$ and $W$, their tensor product $V\otimes W$ is an $H$-comodule via the diagonal coaction $\rho_{ {V\otimes W}}\left( v\otimes w\right) =\sum v_{-1}w_{-1}\otimes v_{0}\otimes w_{0}.$ The unit is $\Bbbk$, regarded as a left $H$-comodule via the trivial
coaction $\rho_{\Bbbk} \left( k\right) = 1_{H}\otimes k$. The constraints are given by 
\begin{gather*}
\calpha _{ {U,V,W}}(u\otimes v\otimes w):=\sum \omega^{-1}(u_{-1}\otimes v_{-1}\otimes w_{-1})u_{0}\otimes v_{0}\otimes w_{0}, \\
{\clambda_{V}}(1\otimes v)=\sum l(v_{-1})v_0 \qquad {\crho_V}(v\otimes 1)=\sum r(v_{-1})v_0
\end{gather*}
for every $U,V,W\in {^{H} \M }$ and all $ u\in U,v\in V,w\in W$. Moreover, every morphism of coquasi-bialgebras $f:H\rightarrow H^{\prime }$ induces a strict monoidal functor ${^{f} \M }: {^{H} \M }\rightarrow {^{H^{\prime}} \M }$, which is given by the assignments 
\begin{equation*}
\begin{split}
{^{f} \M }\left( X,\rho _{ {X}}:X\rightarrow H\otimes X\right) =\left( X,\left(f\otimes X\right) \rho _{ {X}}\right), \qquad {^{f} \M }\left(\gamma :X\rightarrow Y\right) =\gamma.
\end{split}
\end{equation*}
Dualizing \cite{Dri}, one can check that any coquasi-bialgebra is equivalent to one in which $l=\varepsilon=r$. For this reason, we will focus only on the latter case and from now on all coquasi-bialgebras will satisfy
\begin{equation}\tag{$4'$}\label{eq:quasi-unitairity cocycle2}
\omega \left( C\otimes u\otimes C\right) =\varepsilon\otimes \varepsilon , \qquad m\left( u\otimes C\right) = C = m\left( C\otimes u\right)
\end{equation}
instead of relations \eqref{eq:quasi-unitairity cocycle}. Moreover, all quasi-monoidal functors will be neutral and hence we will omit to specify it.

Dually to coquasi-bialgebras we have \emph{quasi-bialgebras}, that is to say, ordinary algebras $A$ with a counital comultiplication which is coassociative up to conjugation by a suitable invertible element $\Phi\in A\otimes A\otimes A$. 
%Notice that the definition of a quasi-bialgebra precedes that of a coquasi-bialgebra and dates back to \cite[\S1]{Dri}.

\subsection{Preantipodes for coquasi-bialgebras}\label{section4}
The following definition traces word by word \cite[Definition 3.6]{Ardi-Pava}.

\begin{definition}
\label{def:preantipode} A \emph{preantipode} for a coquasi-bialgebra $H$ is a $\Bbbk $-linear endomorphism $S:H\rightarrow H$ such that, for all $h\in H$, 
\begin{gather} 
\sum S(h_{1})_{1}h_{2}\otimes S(h_{1})_{2}=1_{H}\otimes S(h), \notag \\
\sum S(h_{2})_{1}\otimes h_{1}S(h_{2})_{2}=S(h)\otimes 1_{H}, \notag \\
\sum \omega (h_{1}\otimes S(h_{2})\otimes h_{3})=\varepsilon (h).\label{fond S}
\end{gather}
\end{definition}

\begin{remark}\label{Lempreant}
Let $H$ be a coquasi-bialgebra with a preantipode $S$. Then
\begin{equation*}
\sum h_{1}S(h_{2}) =\varepsilon S(h)1_{H}=\sum S(h_{1})h_{2}
\end{equation*}
for all $h\in H$. In particular, if $\varepsilon S(h)=\varepsilon \left( h\right) $ then $S$ is an ordinary antipode.
\end{remark}

A coquasi-bialgebra $H$ turns out to be an algebra in the monoidal category ${^{H} \M ^{H}}$ \cite[\S2]{Sch-TwoChar}. Thus we may consider the so-called category of \emph{right coquasi-Hopf  $H$-bicomodules} ${^{H} \M _{H}^{H}}:=\left({^{H} \M ^{H}}\right)_{H}$.

Assume that $H$ is flat over $\K$. Then the functor $F:{^{H}\M}\rightarrow {^{H}\M_{H}^{H}}$ given by $F(V):=V\otimes H$ admits a right adjoint $G:{^{H} \M _{H}^{H}}\rightarrow {{^{H} \M }}$, $G\left( M\right) :=M^{coH}$, where $M^{\mathrm{co}H}:=\{m\in M\mid m_{0}\otimes
m_{1}=m\otimes 1_{H}\}$ is the space of right $H$-coinvariant elements in $M$. The counit $\epsilon:FG\rightarrow \mathrm{id}$ and the unit $\eta :\mathrm{id}\rightarrow GF$ of the adjunction are given respectively by  $\epsilon _{ {M}}(x\otimes h):=xh$ and $\eta _{ {N}}\left(n\right) :=n\otimes 1_{H}$ for every $M\in $ $^{H} \M _{H}^{H}$, $N\in {^{H} \M }$ and for all $m\in M$, $n\in N$, $h\in H$ (we refer to \cite{Ardi-Pava} for details).
Then one can mimic step by step the proof of \cite[Theorem 3.9]{Ardi-Pava} to prove the following.

\begin{theorem}\label{Teoequidual}
Under the standing assumption that $H$ is $\K$-flat, the adjunction $(F,G)$ is an equivalence of categories if and only if $H$ admits a preantipode.
\end{theorem}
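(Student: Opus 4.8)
The plan is to follow the blueprint of \cite[Theorem 3.9]{Ardi-Pava}, but I want to spell out what actually has to be checked since we now work over a commutative ring and only assume $\K$-flatness of $H$.

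\textbf{From preantipode to equivalence.}
Suppose first that $H$ has a preantipode $S$. The strategy is to write down an explicit candidate for a natural inverse of the unit $\eta$ and of the counit $\epsilon$ of the adjunction $(F,G)$. For $N\in {^H\M}$ the map $\eta_N\colon N\to (N\otimes H)^{\mathrm{co}H}$ is already an iso with inverse $n\otimes h\mapsto \varepsilon(h)n$ (this uses only counitality, not $S$), so $\eta$ is always invertible; the content is entirely in $\epsilon$. For $M\in {^H\M^H_H}$ I would define
\begin{equation*}
\theta_M\colon M\longrightarrow M^{\mathrm{co}H}\otimes H,\qquad \theta_M(m):=\sum \big(m_0\cdot S(m_1)_1\big)\otimes S(m_1)_2 \cdot\! \text{(corrected by }\omega\text{)},
\end{equation*}
where the precise $\omega$-insertions are dictated by the associativity constraint in ${^H\M^H_H}$ exactly as in the proof of \cite[Theorem 3.9]{Ardi-Pava}. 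The three tasks are then: (i) check that the left tensorand of $\theta_M(m)$ indeed lands in $M^{\mathrm{co}H}$, which is where the first relation in Definition \ref{def:preantipode} is used; (ii) check $\epsilon_M\circ\theta_M=\mathrm{id}_M$ and $\theta_M\circ\epsilon_M=\mathrm{id}$, where the second relation and the cocycle condition \eqref{fond S} enter, together with the pentagon identity \eqref{eq:3-cocycle} to reassociate; (iii) check naturality of $\theta$ and that it is a morphism of coquasi-Hopf bicomodules. All of this is the ``mimic step by step'' part: the computations are the linear-dual transcriptions of those in \cite{Ardi-Pava}, and $\K$-flatness of $H$ guarantees that $F$ preserves the equalizers defining $M^{\mathrm{co}H}$ so that $G$ is well behaved and the relevant identifications $(N\otimes H)^{\mathrm{co}H}\cong N$ hold.

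\textbf{From equivalence to preantipode.}
Conversely, assume $(F,G)$ is an equivalence. The idea is to read off $S$ from the inverse of $\epsilon$ evaluated on a universal bicomodule. Take $M:=H\otimes H\in {^H\M^H_H}$ with the outer left and right structures on the two factors and the ``extra'' right coaction making it an object of ${^H\M^H_H}$ (this is the free object $F$ applied to $H$ with its regular left coaction, suitably). Since $\epsilon$ is a natural isomorphism, $\epsilon_M$ is invertible; composing its inverse with appropriate projections and the counit of $H$ produces a $\K$-linear map $S\colon H\to H$, and one verifies that the defining equations of a preantipode are precisely the statement that $\epsilon_M^{-1}$ is a two-sided inverse of $\epsilon_M$, unwound through the explicit formulas for the monoidal constraints on ${^H\M^H_H}$. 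Again $\K$-flatness is what makes $G(M)=M^{\mathrm{co}H}$ compute correctly and makes the triangle identities available.

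\textbf{Main obstacle.}
The genuinely delicate point, and the reason the hypothesis $H$ $\K$-flat is imposed, is the one flagged in the hidden remark earlier in the paper: without flatness the functor $H\otimes-$ need not preserve the equalizers that define coinvariants, so $G(M)=M^{\mathrm{co}H}$ need not carry a well-defined $H$-comodule structure and the adjunction $(F,G)$ may fail to exist in the stated form. Granting flatness, the construction is routine but bookkeeping-heavy: the real work is tracking all insertions of $\omega$ and $\omega^{-1}$ coming from $\calpha$ in ${^H\M^H_H}$ and invoking the $3$-cocycle identity \eqref{eq:3-cocycle} and the normalization \eqref{eq:quasi-unitairity cocycle2} at the right moments so that the candidate maps $\theta_M$ (resp.\ the candidate $S$) satisfy the required identities. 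Since, as the authors note, this is a faithful dualization of \cite[Theorem 3.9]{Ardi-Pava}, I would present the statement, point to that reference for the detailed diagram chases, and only record the explicit formula for $S$ obtained from $\epsilon^{-1}$, since it is exactly this formula that will be needed in Section \ref{sec:tannaka}.
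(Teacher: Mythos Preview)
Your proposal is correct and follows essentially the same approach as the paper, which literally says ``one can mimic step by step the proof of \cite[Theorem 3.9]{Ardi-Pava}''; you simply spell out more of the mechanics than the paper does. One small imprecision worth tightening: the universal bicomodule used in the converse direction is not quite $F(H)$ but the object $H\,\hat\otimes\,H$ with the specific structures recorded in the proof of Proposition~\ref{prop:preantipodemorph} (left coaction $\rho^l(x\otimes y)=\sum y_1\otimes x\otimes y_2$, right coaction $\rho^r(x\otimes y)=\sum x_1\otimes y_1\otimes x_2y_2$), and the preantipode is extracted via $\hat\epsilon^{-1}(h\otimes 1)=\sum h_1\otimes S(h_2)\otimes h_3$.
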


We might have given now a direct proof of the fact that coquasi-bialgebra morphisms preserve preantipodes, but we opted for a less direct approach relying on the subsequent proposition suggested by Alessandro Ardizzoni. The effort is the same and we think that the general result in Proposition \ref{prop:preantipodemorph} deserves to be highlighted, as it may find applications in other contexts.

\begin{proposition}\label{prop:preantipodemorph}
Let $(C,\Delta _{C},\varepsilon _{C})$ be a coalgebra and let $(H,\Delta_{H},\varepsilon _{H},m,u,\omega ,S)$ be a $\K$-flat coquasi-bialgebra with preantipode $S$. Assume that $g,h:C\rightarrow H$ are $\Bbbk $-linear maps such that $g$ is a coalgebra morphism and $g$ and $h$ satisfy:
\begin{gather}
\sum h(z_{2})_{1}\otimes g(z_{1})h(z_{2})_{2}=h(z)\otimes 1, \label{eq:preantmorph1}\\ 
\sum h(z_{1})_{1}g(z_{2})\otimes h(z_{1})_{2}=1\otimes h(z), \label{eq:preantmorph2}\\ 
\sum \omega (g(z_{1})\otimes h(z_{2})\otimes g(z_{3}))=\varepsilon (z), \label{eq:preantmorph3}
\end{gather}
for all $z\in C$. Then $h=S g$.
\end{proposition}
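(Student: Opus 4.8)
The key idea is to exploit the preantipode $S$ of $H$ as a kind of ``convolution inverse'' device and compare $h$ with $Sg$ by a string of manipulations that use exactly the three identities \eqref{eq:preantmorph1}, \eqref{eq:preantmorph2}, \eqref{eq:preantmorph3} together with the defining properties of $S$ in Definition \ref{def:preantipode}. Concretely, the plan is to show that $h$ factors through the ``coinvariant'' construction in a way parallel to how $S$ itself arises, or — more directly — to massage the expression $Sg(z)$ into $h(z)$ by inserting a cleverly chosen resolution of the identity. First I would recall from Remark \ref{Lempreant} the auxiliary relations $\sum h_1S(h_2)=\varepsilon S(h)1_H=\sum S(h_1)h_2$, and note that \eqref{eq:preantmorph1}--\eqref{eq:preantmorph3} are precisely the ``relative'' versions of the three preantipode axioms with $g$ playing the role of the identity and $h$ the role of $S$; the goal is a relative uniqueness statement.

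The core computational step I would carry out is the following. Start from $Sg(z)$ and write it, using the counit, as $\sum \varepsilon(z_1)\,Sg(z_2)$. Then replace $\varepsilon(z_1)$ using \eqref{eq:preantmorph3}, obtaining a sum involving $\omega(g(z_1)\otimes h(z_2)\otimes g(z_3))\,Sg(z_4)$. Now I would use the third preantipode axiom \eqref{fond S} for $H$ (applied in reverse) together with the $3$-cocycle condition \eqref{eq:3-cocycle} to recombine the reassociator terms, and use the first two axioms of Definition \ref{def:preantipode} — in the forms $\sum S(x_1)_1 x_2 \otimes S(x_1)_2 = 1\otimes S(x)$ and $\sum S(x_2)_1 \otimes x_1 S(x_2)_2 = S(x)\otimes 1$ — to collapse products of the shape $S(\cdot)\,g(\cdot)$ or $g(\cdot)\,S(\cdot)$. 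The effect should be that the ``$S$-part'' gets pushed past the $g$-part and then absorbed via \eqref{eq:preantmorph2} (which converts $\sum h(z_1)_1 g(z_2)\otimes h(z_1)_2$ into $1\otimes h(z)$), leaving exactly $h(z)$. Dually one could run the mirror computation using \eqref{eq:preantmorph1}. Throughout, $\K$-flatness of $H$ is what guarantees that the tensor identities one writes down (living a priori in iterated tensor powers) can be manipulated elementwise and that no spurious kernel issues arise when comparing the two sides — this is why the hypothesis is included.

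The main obstacle I expect is bookkeeping of the reassociator: the three preantipode axioms and the relative axioms \eqref{eq:preantmorph1}--\eqref{eq:preantmorph3} each involve $\omega$ (or $\omega^{-1}$) implicitly through the non-(co)associativity, and getting the $\omega$'s to cancel requires repeated, careful use of the $3$-cocycle identity \eqref{eq:3-cocycle} and its normalized consequence \eqref{eq:quasi-unitairity cocycle2} to reassociate the fourfold and fivefold coproducts of $z$ correctly. A clean way to organize this, which I would adopt, is to first establish the result in the special case $C = H$, $g = \mathrm{id}_H$ — where \eqref{eq:preantmorph1}--\eqref{eq:preantmorph3} literally become the preantipode axioms and the claim $h = S$ is the uniqueness of the preantipode — and then observe that the general case follows by precomposing the whole argument with the coalgebra map $g$, since every step only used coalgebra identities on the source and the three hypotheses are stable under this precomposition. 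In fact, since $g$ is a coalgebra morphism, pulling back along $g$ turns each axiom for $S$ on $H$ into the corresponding relative axiom, and the computation transports verbatim; so the real content is the $g=\mathrm{id}$ case, i.e.\ a careful direct proof of uniqueness of preantipodes, after which Proposition \ref{prop:preantipodemorph} is essentially formal. Applying this with $C = H'$ and $g = f$ a coquasi-bialgebra morphism, with $h = S'\!f$ versus $h = fS$, will then yield that morphisms preserve preantipodes.
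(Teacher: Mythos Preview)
Your proposal is a sketch rather than a proof: the ``core computational step'' is described only in outline (``the effect should be\ldots''), and you yourself flag the reassociator bookkeeping as an unresolved obstacle. Concretely, after writing
\[
Sg(z)=\sum \omega\bigl(g(z_1)\otimes h(z_2)\otimes g(z_3)\bigr)\,Sg(z_4)
\]
you give no indication of which instance of the $3$-cocycle identity, or which of the preantipode axioms, is to be applied next, nor how the intermediate terms collapse. In a coquasi-bialgebra the multiplication is not associative, so there is no convolution monoid in which $S$ (or $h$) is an honest two-sided inverse of $\mathrm{id}$, and the usual $S*\mathrm{id}*T$ trick is unavailable; the 3-cocycle alone does not obviously make the $\omega$'s cancel. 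Your proposed reduction to $g=\mathrm{id}_H$ does not help either: you would still need exactly the same computation, merely with $z$ in place of $g(z)$, so the ``real content'' you defer is precisely the missing piece. (Also, uniqueness of the preantipode on $H$ does not by itself imply uniqueness of $h$ relative to a fixed $g$; you would need to transport the \emph{computation}, not just the statement.)

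The paper avoids this head-on computation by a structural argument. It works in the coquasi-Hopf bicomodule $H\hat{\otimes}H$ and uses the component
\[
\hat{\epsilon}\colon (H\hat{\otimes}H)^{\mathrm{co}H}\otimes H \longrightarrow H\hat{\otimes}H
\]
of the counit of the adjunction $(F,G)$. Since $H$ has a preantipode, Theorem~\ref{Teoequidual} makes $\hat{\epsilon}$ invertible, with $\hat{\epsilon}^{-1}(x\otimes 1)=\sum x_1\otimes S(x_2)\otimes x_3$. One then checks directly from \eqref{eq:preantmorph1}--\eqref{eq:preantmorph3} that $\sum g(z_1)\otimes h(z_2)$ lies in the coinvariants and that $\hat{\epsilon}$ sends $\beta(z):=\sum g(z_1)\otimes h(z_2)\otimes g(z_3)$ to $g(z)\otimes 1_H$. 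Hence
\[
\beta(z)=\hat{\epsilon}^{-1}\bigl(g(z)\otimes 1_H\bigr)=\sum g(z_1)\otimes S\bigl(g(z_2)\bigr)\otimes g(z_3),
\]
and applying $\varepsilon\otimes H\otimes\varepsilon$ gives $h(z)=Sg(z)$. This is where $\K$-flatness is actually used: it is needed so that the coinvariants functor is well-defined on ${^H\M^H_H}$ and the adjunction (hence the invertibility of $\hat{\epsilon}$) makes sense --- not, as you suggest, to ``manipulate tensor identities elementwise.''
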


\begin{proof}
As in \cite[\S3.5]{Ardi-Pava}, consider the coquasi-Hopf bicomodule $H \hat{\otimes }H:=H\otimes H$ with explicit structures given by
\begin{gather*}
\rho ^{r}\left( x\otimes y\right) =\sum x_{1}\otimes y_{1}\otimes x_{2}y_{2}, \qquad \rho ^{l}\left( x\otimes y\right) =\sum y_{1}\otimes x\otimes y_{2}, \\
\left( x\otimes y\right) h =\sum x_{1}\otimes y_{1}h_{1}\omega \left( x_{2}\otimes y_{2}\otimes h_{2}\right),
\end{gather*}
for all $x,y,h\in H$. Consider also the distinguished component $\hat{\epsilon }:\left( H\hat{ \otimes }H\right) ^{\mathrm{co}H}\otimes H\rightarrow H\hat{\otimes }H$ of the counit of the adjunction $(F,G)$, which is given explicitly by $\hat{\epsilon }\left( x\otimes y\otimes h\right)=\sum x_{1}\otimes y_{1}h_{1}\omega \left( x_{2}\otimes y_{2}\otimes h_{2}\right) $. Since $H$ admits a preantipode, it is invertible with inverse $\hat{\epsilon }^{-1}\left( x\otimes y\right) = \sum \left(\left( x_{1}\otimes S\left( x_{2}\right) \right) \otimes x_{3}\right) y$ for all $x,y,h\in H$. Finally, consider the assignment $\beta:C\rightarrow H\otimes H\otimes H$ given by 
\begin{equation*}
\beta (z)=\sum g(z_{1})\otimes h(z_{2})\otimes g(z_{3})
\end{equation*}
for all $z\in C$. Observe that 
\begin{align*}
\rho ^{r} & \left(\sum g(z_{1})\otimes h(z_{2})\right) =\sum g(z_{1})_{1}\otimes
h(z_{2})_{1}\otimes g(z_{1})_{2}h(z_{2})_{2} \\
&\stackrel{\left( *\right) }{=}\sum g(z_{1})\otimes h(z_{3})_{1}\otimes g(z_{2})h(z_{3})_{2}\stackrel{\eqref{eq:preantmorph1}}{=}\sum g(z_{1})\otimes h(z_{2})\otimes 1,
\end{align*}
where in $(*)$ we used the hypothesis that $g$ is comultiplicative, whence $\sum g(z_{1})\otimes h(z_{2})\in \left( H\hat{\otimes }H\right) ^{
\mathrm{co}H}$ for all $z\in C$. Therefore for all $z\in C$ we can compute
\begin{align*}
\hat{\epsilon }\beta \left( z\right) &\stackrel{\phantom{(54)}}{=}\hat{\epsilon }
\left(\sum g(z_{1})\otimes h(z_{2})\otimes g(z_{3})\right) \\
 &\stackrel{\phantom{(54)}}{=}\sum g(z_{1})_{1}\otimes h(z_{2})_{1}g(z_{3})_{1}\omega \left(
g(z_{1})_{2}\otimes h(z_{2})_{2}\otimes g(z_{3})_{2}\right) \\
&\stackrel{\phantom{(54)}}{=}\sum g(z_{1})\otimes h(z_{3})_{1}g(z_{4})\omega \left( g(z_{2})\otimes h(z_{3})_{2}
\otimes g(z_{5})\right) \\
&\stackrel{\eqref{eq:preantmorph2}}{=}\sum g(z_{1})\otimes 1_{H}\omega \left( g(z_{2})\otimes h(z_{3})\otimes
g(z_{4})\right) \stackrel{\eqref{eq:preantmorph3}}{=}g(z)\otimes 1_{H}
\end{align*}
so that
\begin{equation*}
\beta \left( z\right) =\hat{\epsilon }^{-1}\left( g(z)\otimes 1_{H}\right) = \sum g(z)_{1}\otimes S\left( g(z)_{2}\right) \otimes g(z)_{3} = \sum g(z_{1})\otimes S\left( g(z_{2})\right) \otimes g(z_{3})
\end{equation*}
and, by applying $\varepsilon \otimes H\otimes \varepsilon $ to both sides,
\begin{align*}
& h(z) = \sum \varepsilon \left( g(z_{1})\right) h(z_{2})\varepsilon \left(
g(z_{3})\right) =  \left( \varepsilon \otimes H\otimes \varepsilon \right)
\left(\sum g(z_{1})\otimes h(z_{2})\otimes g(z_{3})\right) \\
&=\left( \varepsilon \otimes H\otimes \varepsilon \right) (\beta 
\left( z\right) )=\left( \varepsilon \otimes H\otimes \varepsilon \right)
\left(  \sum g(z_{1})\otimes S\left( g(z_{2})\right) \otimes g(z_{3})\right)
=S(g(z)). \qedhere
\end{align*}
\end{proof}

\begin{proposition}
If $(H,S_H),(L,S_L)$ are $\K$-flat coquasi-bialgebras with preantipode and $f:H\to L$ is a morphism of coquasi-bialgebras, then $f S_H = S_L f$. In particular, the preantipode for a $\K$-flat coquasi-bialgebra $H$ is unique.
\end{proposition}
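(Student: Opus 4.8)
The plan is to obtain both assertions as immediate consequences of Proposition \ref{prop:preantipodemorph}. I would instantiate that proposition by taking $C$ to be the underlying coalgebra $(H,\Delta_H,\varepsilon_H)$, taking the $\K$-flat coquasi-bialgebra with preantipode to be the target $(L,\Delta_L,\varepsilon_L,m_L,u_L,\omega_L,S_L)$, and setting $g:=f\colon H\to L$ and $h:=f\circ S_H\colon H\to L$. Since $f$ is in particular a coalgebra morphism, $g=f$ satisfies the hypothesis imposed on $g$. It then remains to verify the three identities \eqref{eq:preantmorph1}, \eqref{eq:preantmorph2}, \eqref{eq:preantmorph3} for this choice of $g$ and $h$; once this is done, the proposition yields $h=S_L g$, which is precisely $f S_H=S_L f$.

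Checking the three identities is routine. In each of them one transports $f$ through the comultiplication, multiplication, unit and reassociator using that $f$ is a morphism of coquasi-bialgebras (so that $\Delta_L f=(f\otimes f)\Delta_H$, $m_L(f\otimes f)=f m_H$, $f u_H=u_L$, $\omega_L(f\otimes f\otimes f)=\omega_H$ and $\varepsilon_L f=\varepsilon_H$), and what remains is exactly one of the defining axioms of the preantipode $S_H$ from Definition \ref{def:preantipode}. Concretely, the left-hand side of \eqref{eq:preantmorph1} becomes $(f\otimes f)$ applied to $\sum S_H(z_2)_1\otimes z_1 S_H(z_2)_2=S_H(z)\otimes 1_H$; the left-hand side of \eqref{eq:preantmorph2} becomes $(f\otimes f)$ applied to $\sum S_H(z_1)_1 z_2\otimes S_H(z_1)_2=1_H\otimes S_H(z)$; and \eqref{eq:preantmorph3} becomes the identity $\omega_L(f\otimes f\otimes f)=\omega_H$ composed with $\sum\omega_H(z_1\otimes S_H(z_2)\otimes z_3)=\varepsilon_H(z)$. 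The only point requiring a little care is to keep the order of the multiplications straight while transporting them along $f$, and this presents no genuine obstacle since $f$ is an algebra morphism in the ordinary sense; the real content of the argument has already been isolated in Proposition \ref{prop:preantipodemorph}.

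For the last sentence I would apply the part just proved to the identity morphism: if $S$ and $S'$ are both preantipodes of the $\K$-flat coquasi-bialgebra $H$, then $(H,S)$ and $(H,S')$ are $\K$-flat coquasi-bialgebras with preantipode and $\id_H\colon H\to H$ is a morphism of coquasi-bialgebras, so $\id_H\circ S=S'\circ\id_H$, i.e. $S=S'$. I do not expect any real difficulty anywhere in this argument; the statement is essentially the observation that the pair $(f,\,f\circ S_H)$ satisfies the hypotheses of Proposition \ref{prop:preantipodemorph}, together with the specialisation to $f=\id_H$ for uniqueness.
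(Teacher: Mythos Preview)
Your proposal is correct and follows essentially the same approach as the paper: apply Proposition \ref{prop:preantipodemorph} with $C=H$, $g=f$, $h=fS_H$, verify the three hypotheses using that $f$ is a coquasi-bialgebra morphism together with the preantipode axioms for $S_H$, and then specialise to $f=\id_H$ for uniqueness. The paper's proof is terser but the content is identical.
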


\begin{proof}
Since $f$ is a morphism of coquasi-bialgebras, it is in particular a coalgebra morphism and $f S_{ {A}}=h$ satisfies \eqref{eq:preantmorph1}, \eqref{eq:preantmorph2} and \eqref{eq:preantmorph3} of Proposition \ref{prop:preantipodemorph}, whence $fS_A=S_Bf$.
\begin{invisible}
Indeed, a direct computation
\begin{gather*}
\sum  (gS_{ {A}}(z_{2}) ) _{1}\otimes g(z_{1}) (g S_{ {A}}(z_{2}) )_{2} = \hspace{-1pt} \sum g (S_{ {A}}(z_{2})_{1} )\otimes g ( z_{1}S_{ {A}}(z_{2})_{2} ) \hspace{-1pt} = \hspace{-1pt} gS_{ {A}}(z) \otimes 1, \\ 
\sum ( g S_{ {A}}(z_{1}) ) _{1}g(z_{2})\otimes  (g S_{ {A}}(z_{1}) )_{2} = \hspace{-1pt} \sum g ( S_{ {A}}(z_{1})_{1}z_{2} ) \otimes g ( S_{ {A}}(z_{1})_{2} ) \hspace{-1pt}= \hspace{-1pt} 1\otimes g S_{ {A}}(z), \\ 
\sum \omega _{ {B}}(g(z_{1})\otimes gS_{ {A}}(z_{2}) \otimes g(z_{3})) = \sum \omega_{ {A}} (z_{1}\otimes S_{ {A}}(z_{2})\otimes z_{3}) \hspace{-1pt} = \hspace{-1pt} \varepsilon_{ {A}} (z),
\end{gather*}
shows that \eqref{eq:preantmorph2} and \eqref{eq:preantmorph3} of Proposition \ref{prop:preantipodemorph} are satisfied.
\end{invisible}
Now, assume that $S$ and $T$ are two preantipodes for $H$. The first claim applied to the case $(H,S), (H,T)$ and $f=\id_H$ entails that $S=T$. 
\begin{invisible}
For our own sake, let us retrieve here a direct and ``easy'' proof of the fact that coquasi-bialgebra morphisms commute with preantipodes (as it has been suggested by a referee on a previous version of the paper).

Set ${H\hat{\otimes}H}:=H\otimes H$ with structures
\begin{gather*}
\rho _{ {H\hat{\otimes} H}}^{l}(x\otimes y)=\sum y_{1}\otimes (x\otimes
y_{2}), \qquad \rho _{ {H\hat{\otimes} H}}^{r}(x\otimes y)=\sum (x_1\otimes y_{1})\otimes x_2y_{2}, \\
\mu _{ {H\hat{\otimes} H}}^{r}\left( (x\otimes y)\otimes h\right) =(x\otimes
y)h=\sum \omega ^{-1}(x_{1}\otimes y_{1}\otimes h_{1})x_{2}\otimes
y_{2}h_{2},
\end{gather*}
for every $x,y,h\in H$. Recall from the proof of \cite[Theorem 3.5]{Ardi-Pava} that if a preantipode $S$ exists
then an inverse for the component of the counit $\epsilon _{ {H\hat{\otimes}H}}:\left( H\,\hat{\otimes }\,H\right) ^{\mathrm{co}H}\otimes H\rightarrow H\,\hat{\otimes }\,H$ is explicitly given by the relation $\epsilon _{ {H\hat{\otimes }H}}^{-1}\left( h\otimes 1\right) =\sum
h_{1}\otimes S\left( h_{2}\right) \otimes h_{3}$. Notice that $f:H\to L$ induces a $\K$-linear map $co(f):\coinv{\left({H\hat{\otimes}H}\right)}{H}\to \coinv{\left({L\hat{\otimes}L}\right)}{L}$ given by the (co)restriction of $f\otimes f$.
Indeed, 
\begin{align*}
\rho_{L\hat{\otimes}L}^r\left(\sum_if(x_i)\otimes f(y_i)\right) & = \sum_if(x_i)_1\otimes f(y_i)_1\otimes f(x_i)_2f(y_i)_2 \\
 & = \sum_if\left((x_i)_1\right)\otimes f\left((y_i)_1\right)\otimes f\left((x_i)_2\right)f\left((y_i)_2\right) \\
 & = \sum_if\left((x_i)_1\right)\otimes f\left((y_i)_1\right)\otimes f\left((x_i)_2(y_i)_2\right) \\
 & = \sum_if\left(x_i\right)\otimes f\left(y_i\right)\otimes f\left(1_H\right) \\
 & = \sum_if\left(x_i\right)\otimes f\left(y_i\right)\otimes 1_L.
\end{align*}
Then, a direct computation shows that $\epsilon _{ {L\hat{\otimes}L}}\circ(co(f)\otimes f) = (f\otimes f) \circ \epsilon _{ {H\hat{\otimes}H}}$. Indeed,
\begin{align*}
\sum_i & \left(\epsilon _{ {L\hat{\otimes}L}}\circ(co(f)\otimes f)\right) ((x_i\otimes y_i)\otimes z_i) = \sum_i\epsilon _{ {L\hat{\otimes}L}}\left((f(x_i)\otimes f(y_i))\otimes f(z_i)\right) \\ 
 & = \sum_i\omega_L^{-1}\left(f(x_i)_1\otimes f(y_i)_1\otimes f(z_i)_1\right)(f(x_i)_2\otimes f(y_i)_2f(z_i)_2) \\
 & = \sum_i\omega_L^{-1}\left(f\left((x_i)_1\right)\otimes f\left((y_i)_1\right)\otimes f\left((z_i)_1\right)\right)\left(f\left((x_i)_2\right)\otimes f\left((y_i)_2(z_i)_2\right)\right) \\
 & \stackrel{(*)}{=} \sum_i\omega_H^{-1}\left((x_i)_1\otimes (y_i)_1\otimes (z_i)_1\right)\left(f\left((x_i)_2\right)\otimes f\left((y_i)_2(z_i)_2\right)\right) \\ 
 & = \sum_i(f\otimes f)\left(\omega_H^{-1}\left((x_i)_1\otimes (y_i)_1\otimes (z_i)_1\right)\left((x_i)_2\otimes (y_i)_2(z_i)_2\right)\right) \\
 & = \sum_i\left((f\otimes f)\circ \epsilon _{ {H\hat{\otimes}H}}\right)\left((x_i\otimes y_i)\otimes z_i\right)
\end{align*}
Therefore, by applying $\varepsilon_L\otimes L\otimes \varepsilon_L$ to both sides of the following relation
$$
\sum f\left(h_{1}\right)\otimes f\left(S_H\left( h_{2}\right)\right) \otimes f\left(h_{3}\right) = \sum f(h)_{1}\otimes S_L\left( f(h)_{2}\right) \otimes f(h)_{3},
$$
we get that $f\circ S_H = S_L \circ f$, proving the second claim. Now, assume that $S$ and $T$ are two preantipodes for $H$. In light of the second claim applied to the case $(H,S), (H,T)$ and the identity morphism, we conclude that $S =T $, proving the first claim as well. 
\end{invisible}
\end{proof}

\begin{remark}
Notice that the functor $\coinv{(-)}{H}:{^H_{}\M^H_H}\to{^H\M}$ needs not to be well-defined if the functor $H\otimes-$ does not preserve, at least, coreflexive equalizers (\ie equalizers of parallel arrows admitting a common retraction).
% see Hopf Monads on Monoidal Categories by Bruguieres, Lack, Virelizier
\end{remark}

\section{Coquasi-bialgebras with preantipode and rigid monoidal categories}\label{sec:tannaka}

It is well-known that every rigid monoidal category together with a monoidal functor to the category of finitely-generated and projective $\K$-modules gives rise to a Hopf algebra. Via a variant of the same Tannaka-Kre{\u\i}n reconstruction process, it has been shown by Majid in \cite{Majid-reconstruction} that every monoidal category $\cC$ together with a quasi-monoidal functor $\forget: \cC \rightarrow \M_f$ gives rise to a coquasi-bialgebra $H$ instead. A very natural question then is what happens if the category $\cC$ is also rigid.

Our aim in this section is to show how this rigidity is related with the existence of a preantipode on $H$. We will do this without any additional assumption on $\cC$. In particular, we will implicitly allow $\forget(X^\star)$ and $\forget(X)^*$ to be non-isomorphic objects, as it happens for example in \cite[Example 4.5.1]{Schauenburg-HopExt}.

\begin{remark}
The following observation about a previous version of the present paper has been brought to our attention and deserves to be highlighted. Assume that $\K$ is a field and consider a category $\cC$ together with a functor $\forget:\cC\to\M_f$. Let $C$ be the reconstructed coalgebra and denote by ${\forget^C}:\cC\to {^C\M_f}$ the induced functor. Then every finite-dimensional $C$-comodule can be recovered from comodules of the form ${\forget^C(X)}$ by taking finite direct sums, kernels and cokernels (see \cite[Corollary 2.2.9]{Sch-Tannaka}).
\begin{invisible}
What follows is my own proof, mimicking Schauenburg's argument, that the claim is true.

Recall that $C$ can be realized as the quotient
$$
\frac{\bigoplus_{X\in\cC}\forget(X)\otimes \forget(X)^*}{\mathsf{Span}_{\K}\big\{x\otimes \left(\psi\circ \forget(f)\right) - \forget(f)(x)\otimes \psi\mid x\in\forget(X),\psi\in\forget(Y)^*,f\in\cC(X,Y)\big\}}.
$$
Set $\cI_{\cC}$ for the vector subspace we are quotienting by. Let $(M,\rho_M)$ be a finite-dimensional comodule over $C$ with basis $\{e_i\mid i=1,\ldots,n\}$. Consider the (finite) set of coefficients $c_{i,j}$ in $C$ determined by $\rho_{M}(e_i)=\sum_{j=1}^n c_{i,j}\otimes e_j$. Then
$$
c_{i,j}=\sum_{k}^{t_{i,j}}x^k_{i,j}\otimes \varphi^k_{i,j}+\cI_{\cC}, \qquad \varphi^k_{i,j}\in\forget(X_{i,j}^k)^*,x^k_{i,j}\in\forget(X_{i,j}^k).
$$
Let $\cE$ be the full subcategory of $\cC$ such that
$$
\cE_0=\left\{X_{i,j}^k\mid i,j=1,\ldots,n,k=1,\ldots,t_{i,j}\right\}
$$
and let
$$E=\coend(\forget|_\cE)=\frac{\bigoplus_{Y\in\cE}\forget(Y)\otimes \forget(Y)^*}{\cI_{\cE}}.$$
It is a finite-dimensional subcoalgebra of $C$ and $M$ is a finite-dimensional $E$-comodule via the corestriction $\rho_M^E:M\to E\otimes M$ of $\rho_M$. Then $M$, as a $C$-comodule, can be recovered as the equalizer of the diagram in ${^C\M_f}$
$$
\xymatrix @C=40pt{
E\otimes M \ar@<+0.5ex>[r]^-{E\otimes \rho_M} \ar@<-0.5ex>[r]_-{\Delta_E\otimes M} & E\otimes E\otimes M,
}
$$
where the comodule structures come from the left-most tensor factors. Now, for every $X,Y\in\cE$ we can choose morphisms $f_{X,Y}^m\in\cC({X},{Y})$ such that the $\forget(f_{X,Y}^m)$'s form a basis for the vector subspace $\langle\forget\left(\cC({X},{Y})\right)\rangle$ generated by $\forget\left(\cC({X},{Y})\right)$ in $\Homk(\forget(X),\forget(Y))$, as the following lemma shows.

\begin{lemma}
Given $X,Y$ in $\cC$, there exists $t\leq\mathsf{dim}(\Homk(\forget(X),\forget(Y)))$ and $t$ arrows $f_1,\ldots,f_t\in \cC(X,Y)$ such that $\{\forget(f_1),\ldots,\forget(f_t)\}$ is a basis for $\langle\forget\left(\cC({X},{Y})\right)\rangle$.
\end{lemma}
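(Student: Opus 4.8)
The plan is to reduce the statement to the elementary linear-algebra fact that any spanning set of a finite-dimensional vector space contains a basis, applied to the (a priori non-linear) subset $\forget(\cC(X,Y))$ of the finite-dimensional $\K$-vector space $\Homk(\forget(X),\forget(Y))$.

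First I would set $W:=\Homk(\forget(X),\forget(Y))$ and $n:=\dim_{\K}W$, which is finite since $\forget(X)$ and $\forget(Y)$ are finite-dimensional. Since $\cC$ is essentially small, $\cC(X,Y)$ is a set, so $S:=\forget(\cC(X,Y))$ is an honest subset of $W$, and $V:=\langle\forget(\cC(X,Y))\rangle$ is a $\K$-subspace of $W$, whence $\dim_{\K}V\leq n$.

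Next I would consider the collection of all $\K$-linearly independent (finite) subsets of $S$. Each such subset has cardinality at most $\dim_{\K}V\leq n$, so there exists one, say $\{g_1,\dots,g_t\}$, of maximal cardinality $t\leq n$. I claim it spans $V$: if it did not, then, as $S$ spans $V$, there would be some $s\in S$ with $s\notin\langle g_1,\dots,g_t\rangle$, and $\{g_1,\dots,g_t,s\}$ would be a strictly larger linearly independent subset of $S$, contradicting maximality. Hence $\{g_1,\dots,g_t\}$ is a basis of $V=\langle\forget(\cC(X,Y))\rangle$. Finally, each $g_i$ belongs to $\forget(\cC(X,Y))$ by construction, so I may pick (a finite choice) arrows $f_i\in\cC(X,Y)$ with $\forget(f_i)=g_i$; the family $f_1,\dots,f_t$ then has the desired property, with $t\leq n=\dim_{\K}\Homk(\forget(X),\forget(Y))$.

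There is essentially no obstacle here. The only point that deserves a moment's attention is the existence of a maximal linearly independent subset of $S$, which is guaranteed precisely because $V$ is finite-dimensional (so the cardinalities of linearly independent subsets are bounded by $n$), together with the routine verification that maximality forces spanning. Note that functoriality of $\forget$ is used only to ensure that $S\subseteq W$, and that neither the rigidity of $\cC$ nor any coquasi-bialgebra structure is needed for this lemma.
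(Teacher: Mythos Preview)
Your proof is correct and follows essentially the same approach as the paper: both take a maximal linearly independent subset of $\forget(\cC(X,Y))$ and show by contradiction that it must span $\langle\forget(\cC(X,Y))\rangle$. The only cosmetic difference is that the paper phrases the existence of a maximal element via Zorn's Lemma (noting that chains in the poset of free subsets stabilize by finite-dimensionality), whereas you argue more directly that the cardinalities of linearly independent subsets are bounded by $\dim V$ and pick one of maximal size; your formulation is arguably cleaner since Zorn is unnecessary here.
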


\begin{proof}
Let $\cP:=\left\{S\subseteq \forget(\cC(X,Y))\mid S \text{ is free in }\Homk(\forget(X),\forget(Y))\right\}$. This is a poset under ordinary inclusion. Of course, every chain eventually stabilizes because $\langle\forget\left(\Hom{}{}{\cC}{}{X}{Y}\right)\rangle$ is finite-dimensional, whence it admits an upper bound. Therefore, in view of Zorn's Lemma, $\cP$ admits a maximal element $S_m:=\{\forget(f_1),\ldots,\forget(f_t)\}$ and obviously $\langle \forget(f_1),\ldots,\forget(f_t) \rangle\subseteq \langle\forget\left(\cC({X},{Y})\right)\rangle$. Conversely, if $\forget(g)\notin \langle \forget(f_1),\ldots,\forget(f_t) \rangle$ then $\{\forget(f_1),\ldots,\forget(f_t),\forget(g)\}\in\cP$ and strictly contains $S_m$, which is a contradiction. Thus, $\forget\left(\cC({X},{Y})\right)\subseteq \langle \forget(f_1),\ldots,\forget(f_t) \rangle$ and hence $\langle \forget(f_1),\ldots,\forget(f_t) \rangle = \langle\forget\left(\cC({X},{Y})\right)\rangle$.
\end{proof}

\noindent Therefore, since $\cI_{\cE}$ can be shown to coincide with
$$
\mathsf{Span}_{\K}\left\{\forget(f_{X,Y}^m)(x)\otimes \varphi - x\otimes \forget(f_{X,Y}^m)^*(\varphi)\mid x\in\forget(X),\varphi\in\forget(Y)^*,f_{X,Y}^m\in\cC(X,Y)\right\},
$$
$E$ can be recovered as the coequalizer in ${\fvect}$ of the pair of arrows
\begin{equation}\label{eq:horrible}
\xymatrix @C=30pt{
{\displaystyle \bigoplus_{f_{X,Y}^m \, (X,Y\in\cE)}\forget \left( \mathrm{dom}\left( f_{X,Y}^h\right)  \right)\otimes \forget \left( \mathrm{cod}\left( f_{X,Y}^h\right) \right) ^{\ast }}  \ar@<+0.5ex>[r]^-{F_{Q}}\ar@<-0.5ex>[r]_-{F_{Z}} & {\displaystyle\bigoplus_{X\in \cE_{0}}\forget \left( X\right)\otimes \forget \left(X\right) ^{\ast }   }
}
\end{equation}
where $F_{Q}$ is the unique map satisfying
$$
F_{Q}\circ \mathrm{inj}_{f}=\mathrm{inj}_{\mathrm{dom}\left( f\right) }\circ\left( \forget \left( \mathrm{dom}\left( f\right) \right)\otimes\forget \left( f\right) ^{\ast } \right) 
$$
and $F_{Z}$ the unique such that
$$
F_{Z}\circ \mathrm{inj}_{f}=\mathrm{inj}_{\mathrm{cod}\left( f\right) }\circ\left( \forget \left( f\right) \otimes \forget \left( \mathrm{cod}\left( f\right) \right)^{\ast } \right) .
$$
\begin{personal}
The idea is that since $\circ$ for vector spaces is bilinear, it is enough to know the relations in $\cI_{\cE}$ for a family of $f$ such that all the others are linear combinations of these.
\end{personal}
Every $\forget \left( X\right)\otimes \forget \left(Y\right) ^{\ast }$ is a left $C$-comodule with structure coming from the left-most tensor factor, the coproduct of comodules is again a comodule (\cite[\S3.7]{BrWi}) and all the arrows involved in the realization of $E$ are $C$-colinear, whence it can be recovered as coequalizer in ${^C\M_f}$.

Taking advantage of the fact that all the $\forget(Y)^*$ are finite-dimensional, we may consider the isomorphism of $C$-comodules $\forget(X)\otimes \forget(Y)^*\cong \forget(X)^{\dim(\forget(Y)^*)}$ and hence claim that $E$ is a cokernel of a morphism of comodules that are finite biproducts of comodules of the form ${\forget^C}(X)$ for $X\in \cC$. In the same way, by using $E\otimes M\cong E^{\dim(M)}$ and $E\otimes E\otimes M\cong E^{\dim(E)\dim(M)}$ we may claim that $M$ is the kernel of a morphism between comodules that can be constructed by taking finite biproducts and cokernel of comodules of the form ${\forget^C}(X)$ for $X\in \cC$. Summing up, every finite-dimensional $C$-comodule can be recovered by taking finite biproducts, kernels and cokernels of comodules coming from objects in $\cC$ (see also \cite[Corollary 2.2.9]{Sch-Tannaka}).
\end{invisible}
Since in an abelian monoidal category with exact tensor product (\eg the category of comodules over a $\K$-coalgebra), the family of rigid objects is closed under finite biproducts, kernels and cokernels, if $\cC$ is rigid then ${^C\M_f}$ has to be rigid as well and hence $C$ admits a preantipode in light of \cite[Theorem 2.6]{Sch-TwoChar}.

\begin{invisible}
Let us prove briefly the claim about abelian monoidal categories, before continuing. Let $\mathcal{C}$ be an abelian monoidal category with exact tensor product and let $f:X\rightarrow Y$ be a morphism between rigid objects in $\mathcal{C}$. We want to prove that $\ker \left( f\right) ^{\star }=\coker\left( f^{\star }\right) $ and that $\coker\left( f\right) ^{\star}=\ker \left( f^{\star }\right) $. Let us adopt the following notation
\begin{gather*}
0 \longrightarrow \ker \left( f\right) \overset{k}{\longrightarrow }X\overset{f}{\longrightarrow }Y\overset{c}{\longrightarrow }\coker\left( f\right) \longrightarrow 0, \\
0 \longrightarrow \ker \left( f^{\star }\right) \overset{k_{\star }}{\longrightarrow }Y^{\star }\overset{f^{\star }}{\longrightarrow }X^{\star }\overset{c_{\star }}{\longrightarrow }\coker\left( f^{\star }\right)\longrightarrow 0.
\end{gather*}
First of all, we define $\db{k}:\I \rightarrow \coker\left( f\right) ^{\star }\otimes \ker \left( f\right) $ as follows. Consider the composition
\begin{equation*}
\I \overset{\db{X}}{\longrightarrow }X^{\star }\otimes X\overset{c_{\star }\otimes X}{\longrightarrow }\coker\left( f^{\star}\right) \otimes X.
\end{equation*}
We have that
\begin{align*}
\left( \coker\left( f^{\star }\right) \otimes f\right) \circ \left(c_{\star }\otimes X\right) \circ \db{X} &=\left( c_{\star }\otimes Y\right) \circ \left( X^{\star }\otimes f\right) \circ \db{X} \\
&=\left( c_{\star }\otimes Y\right) \circ \left( f^{\star }\otimes Y\right)\circ \db{Y}=0
\end{align*}
and hence $\left( c_{\star }\otimes X\right) \circ \db{X}$ factors through the kernel of $\coker\left( f^{\star }\right) \otimes f$, i.e. there exists a unique $\db{k}$ such that 
\begin{equation*}
\left( c_{\star }\otimes X\right) \circ \db{X}=\left( \coker\left( f^{\star }\right) \otimes k\right) \circ \db{k}.
\end{equation*}
Secondly, we define $\ev{k}:\ker \left( f\right) \otimes \coker\left( f^{^{\star }}\right) \rightarrow \I $ as follows. Consider the composition
\begin{equation*}
\ker \left( f\right) \otimes X^{\star }\overset{k\otimes X^{\star }}{\longrightarrow }X\otimes X^{\star }\overset{\ev{X}}{\longrightarrow }\I .
\end{equation*}
We have that
\begin{align*}
\ev{X}\circ \left( k\otimes X^{\star }\right) \circ \left( \ker\left( f\right) \otimes f^{\star }\right)  &=\ev{X}\circ \left(X\otimes f^{\star }\right) \circ \left( k\otimes Y^{\star }\right)  \\
&=\ev{Y}\circ \left( f\otimes Y^{\star }\right) \circ \left(k\otimes Y^{\star }\right) =0
\end{align*}
and hence $\ev{X}\circ \left( k\otimes X^{\star }\right) $ factors through the cokernel of $\ker \left( f\right) \otimes f^{\star }$, i.e. there exists a unique $\ev{k}$ such that
\begin{equation*}
\ev{k}\circ \left( \ker \left( f\right) \otimes c_{\star }\right) =\ev{X}\circ \left( k\otimes X^{\star }\right) .
\end{equation*}
Let us check that these satisfy the zigzag identities. We compute
\begin{align*}
k\circ & \left( \ev{k}\otimes \ker \left( f\right) \right) \circ\left( \ker \left( f\right) \otimes \db{k}\right) \\
& =\left( \ev{k}\otimes X\right) \circ \left( \ker \left( f\right) \otimes \coker\left( f^{\star }\right) \otimes k\right) \circ \left( \ker\left( f\right) \otimes \db{k}\right)  \\
&=\left( \ev{k}\otimes X\right) \circ \left( \ker \left( f\right)\otimes c_{\star }\otimes X\right) \circ \left( \ker \left( f\right)\otimes \db{X}\right)  \\
&=\left( \ev{X}\otimes X\right) \circ \left( k\otimes X^{\star}\otimes X\right) \circ \left( \ker \left( f\right) \otimes \db{X}\right)  \\
&=\left( \ev{X}\otimes X\right) \circ \left( X\otimes \db{X}\right) \circ k=k,
\end{align*}
from which we deduce (since $k$ is mono) that $\left( \ev{k}\otimes\ker \left( f\right) \right) \circ \left( \ker \left( f\right) \otimes \db{k}\right) =\mathrm{id}_{\ker \left( f\right) }$, and
\begin{align*}
\left( \coker\left( f^{\star }\right) \otimes \ev{k}\right) & \circ \left( \db{k}\otimes \coker\left( f^{\star }\right)\right) \circ c_{\star } \\
&=\left( \coker\left( f^{\star }\right)\otimes \ev{k}\right) \circ \left( \coker\left( f^{\star}\right) \otimes \ker \left( f\right) \otimes c_{\star }\right) \circ \left( \db{k}\otimes X^{\star }\right)  \\
&=\left( \coker\left( f^{\star }\right) \otimes \ev{X}\right) \circ \left( \coker\left( f^{\star }\right) \otimes k\otimes X^{\star }\right) \circ \left( \db{k}\otimes X^{\star}\right)  \\
&=\left( \coker\left( f^{\star }\right) \otimes \ev{X}\right) \circ \left( c_{\star }\otimes X\otimes X^{\star }\right) \circ \left( \db{X}\otimes X^{\star }\right)  \\
&=c_{\star }\circ \left( X\otimes \ev{X}\right) \circ \left( \db{X}\otimes X^{\star }\right) =c_{\star },
\end{align*}
from which we deduce that $\left( \coker\left( f^{\star }\right) \otimes \ev{k}\right) \circ \left( \db{k}\otimes \coker\left( f^{\star }\right) \right) =\mathrm{id}_{\coker\left(f^{\star }\right) }$. This shows that $\coker\left( f^{\star}\right) $ is a right dual for $\ker \left( f\right) $. Conversely, let us consider
\begin{equation*}
\I \overset{\db{Y}}{\longrightarrow }Y^{\star }\otimes Y\overset{Y^{\star }\otimes c}{\longrightarrow }Y^{\star }\otimes \coker\left( f\right) .
\end{equation*}
Since
\begin{align*}
\left( f^{\star }\otimes \coker\left( f\right) \right) \circ \left(Y^{\star }\otimes c\right) \circ \db{Y} &=\left( X^{\star}\otimes c\right) \circ \left( f^{\star }\otimes Y\right) \circ \db{Y} \\
&=\left( X^{\star }\otimes c\right) \circ \left( X^{\star }\otimes f\right) \circ \db{X}=0
\end{align*}
there exists a unique morphism $\I \overset{\db{c}}{\longrightarrow }\ker \left( f^{\star }\right) \otimes \coker\left(f\right) $ such that
\begin{equation*}
\left( k_{\star }\otimes \coker\left( f\right) \right) \circ \db{c}=\left( Y^{\star }\otimes c\right) \circ \db{Y}.
\end{equation*}
Moreover, if we consider 
\begin{equation*}
Y\otimes \ker \left( f^{\star }\right) \overset{Y\mathsf{\otimes k}_{\star }}	{\longrightarrow }Y\otimes Y^{\star }\overset{\ev{Y}}{\longrightarrow }\I 
\end{equation*}
then
\begin{align*}
\ev{Y}\circ \left( Y\otimes k_{\star }\right) \circ \left( f\otimes \ker \left( f^{\star }\right) \right)  &=\ev{Y}\circ \left( f\otimes Y^{\star }\right) \circ \left( X\otimes k_{\star }\right)  \\
&=\ev{X}\circ \left( X\otimes f^{\star }\right) \circ \left( X\otimes k_{\star }\right) =0,
\end{align*}
whence there exists a unique morphism $\coker\left( f\right) \otimes \ker \left( f^{\star }\right) \overset{\ev{c}}{\longrightarrow } \I $ such that
\begin{equation*}
\ev{c}\circ \left( c\otimes \ker \left( f^{\star }\right) \right) =\ev{Y}\circ \left( Y\otimes k_{\star }\right) .
\end{equation*}
Let us show these satisfy the zigzag identities as well. From
\begin{align*}
\left( \ev{c}\otimes \coker\left( f\right) \right) & \circ\left( \coker\left( f\right) \otimes \db{c}\right) \circ c \\
 & =\left( \ev{c}\otimes \coker\left( f\right) \right) \circ\left( c\otimes \ker \left( f^{\star }\right) \otimes \coker\left(f\right) \right) \circ \left( Y\otimes \db{c}\right)  \\
&=\left( \ev{Y}\otimes \coker\left( f\right) \right) \circ \left( Y\otimes k_{\star }\otimes \coker\left( f\right) \right) \circ \left( Y\otimes \db{c}\right)  \\
&=\left( \ev{Y}\otimes \coker\left( f\right) \right) \circ \left( Y\otimes Y^{\star }\otimes c\right) \circ \left( Y\otimes \db{Y}\right)  \\
&=c\circ \left( \ev{Y}\otimes Y\right) \circ \left( Y\otimes \db{Y}\right) =c
\end{align*}
and
\begin{align*}
k_{\star } &\circ \left( \ker \left( f^{\star }\right) \otimes \ev{c}\right) \circ \left( \db{c}\otimes \ker \left( f^{\star}\right) \right) \\
& =\left( Y^{\star }\otimes \ev{c}\right) \circ \left( k_{\star }\otimes \coker\left( f\right) \otimes \ker \left( f^{\star }\right) \right) \circ \left( \db{c}\otimes \ker \left(f^{\star }\right) \right)  \\
&=\left( Y^{\star }\otimes \ev{c}\right) \circ \left( Y^{\star}\otimes c\otimes \ker \left( f^{\star }\right) \right) \circ \left(\db{Y}\otimes \ker \left( f^{\star }\right) \right)  \\
&=\left( Y^{\star }\otimes \ev{Y}\right) \circ \left( Y^{\star}\otimes Y\otimes k_{\star }\right) \circ \left( \db{Y}\otimes \ker\left( f^{\star }\right) \right)  \\
&=\left( Y^{\star }\otimes \ev{Y}\right) \circ \left( \db{Y}\otimes Y^{\star }\right) \circ k_{\star }=k_{\star }
\end{align*}
we conclude that $\left( \ev{c}\otimes \coker\left(f\right) \right) \circ \left( \coker\left( f\right) \otimes \db{c}\right) =\mathrm{id}_{\coker\left( f\right) }$ and $\left(\ker \left( f^{\star }\right) \otimes \ev{c}\right) \circ \left( \db{c}\otimes \ker \left( f^{\star }\right) \right) =\id_{\ker \left( f^{\star }\right) }$, whence $\ker \left( f^{\star }\right) $ is a right dual object for $\coker\left( f\right) $.
\end{invisible}

This purely categorical argument shows that, at least when $\K$ is a field, the validity of our reconstruction theorem should not be surprising. However, the main focus in the present paper is not only on proving the existence of a preantipode for the reconstructed coquasi-bialgebra (even when $\K$ is just a commutative ring), but also to provide an explicit construction of it (thing that, up to our knowledge, cannot be obtained from the foregoing approach).
\end{remark}

\subsection{The classical reconstruction}\label{ssec:classical}

The results in this subsection are well-known. Nevertheless, we retrieve the main steps of the classical reconstruction process for the sake of the unaccustomed reader. We refer to \cite{Majid-reconstruction} and \cite{Sch-Tannaka} for further details.

Let $\left( \cC ,\boxtimes ,\I ,\calpha ,\clambda,\crho \right) $ be an essentially small monoidal category equipped with a quasi-monoidal functor $\forget:\cC \rightarrow \fvect $ from $\cC $ into the category of finitely-generated and projective $\K$-modules. This means that in $ \M $ we have a family of isomorphisms $\varphi _{X,Y}:\forget\left( X\right) \otimes \forget\left( Y\right) \rightarrow \forget\left( X\boxtimes Y\right) $, which is natural in both components, and an isomorphism $\varphi_0:\Bbbk \rightarrow \forget\left( \I \right) $ compatible with the left and right unit constraints as in \eqref{eq:unitfunctor}.
For every $\K$-module $V$ and every  $n\geq 1$, denote by $\forget^{n}:\cC ^{n}\rightarrow {\fvect }$ the functor mapping every $n$-uple of objects $\left( X_{1},\ldots, X_{n}\right) $ in $\cC^{n}$ to the tensor product $\forget\left( X_{1}\right) \otimes \cdots \otimes \forget\left( X_{n}\right) $ in $\M$ and by $\Nat \left( \forget^{n},V\otimes \forget^{n}\right) $ the set  of natural transformations between $\forget^{n}$ and the functor $V\otimes \forget^{n}:\cC^{n} \rightarrow  \M $, sending $\left( X_{1},\ldots, X_{n}\right) $ to $V\otimes \forget^{n}( X_{1},\ldots, X_{n})$. It turns out the functor $\Nat \left( \forget^{n},-\otimes \forget^{n}\right) : \M \rightarrow \Set $ is represented by the $n$-fold tensor product $H_{\forget}^{\otimes n}$ of a suitable coquasi-bialgebra $H_{\forget}$ via a natural isomorphism 
\begin{equation}\label{eq:repr}
\vartheta^{n}:\Homk\left( H_{\forget}^{\otimes n},-\right) \cong \mathrm{Nat
}\left( \forget^{ n},-\otimes \forget^{ n}\right).
\end{equation}
%As a matter of notation and if no confusion may arise, given an arrow $f:X\rightarrow Y$ and other two objects $V$ and $W$ in a category $\cC$ with tensor product $\boxtimes$, we will eventually write simply $f$ instead of $V\boxtimes f\boxtimes W$, leaving the identity morphisms out and understanding that $f$ has to be applied to the suitable tensorand. 
%With these conventions we have explicitly
For all $X_1,\ldots,X_n$ in $\cC$, $V$ in $\M$ and $f\in \Homk\left( H_{\forget}^{\otimes n},V\right)$, this is given explicitly by
\begin{equation*}
{\vartheta _{{ V}}^{ n}\left( f\right) _{{X_{1},\ldots,X_{n}}}  =  \left(  f  \otimes \forget^{ n}\left(X_{1},\ldots, X_{n}\right) \right)  \mb{\tau}_{n}  \left( \frho _{{X_{1}}}  \otimes \cdots \otimes  \frho _{{X_{n}}}\right)}
\end{equation*}
where $\frho :=\vartheta _{ {H}}\left( \mathrm{id}_{H}\right) : \forget\rightarrow H\otimes \forget$, $\mb{\tau}_{n}:=\tau _{{\forget^{ n-1}(X_{1},\ldots , X_{n-1}),H}} \circ \cdots \circ \tau_{{\forget( X_{1}),H}}$ and $\tau _{V,W}:V\otimes W\rightarrow W\otimes V$ denotes the natural transformation acting as $\tau_{V,W}(v\otimes w)=w\otimes v$ for every pair of objects $V,W$ in $ \M $.
%(see the sentence above \cite[Lemma 2.3]{Majid-reconstruction}; cf. also \cite[Lemma 2.3.6]{Sch-Tannaka}).
Since $\forget$ is fixed, we may write $H$ instead of $H_{\forget}$ and we refer to it as the \emph{coendomorphism coquasi-bialgebra} of $\forget$. As a $\K$-module, it is defined to be the coend\footnote{see e.g. \cite[\S IX.6]{Mac} for details about the coend construction} of the functor $\forget \otimes \forget^{\ast }$ from $\cC \times \cC ^{\mathrm{op}}$ to $ \M $. The comultiplication $\Delta$ and the counit $\varepsilon$ are the unique linear maps such that $\vartheta _{{H\otimes H}}\left( \Delta \right)=\left( H\otimes \frho\right) \frho$ and $\vartheta _{\Bbbk }\left( \varepsilon \right)=\mathrm{id}_{\forget}$. The multiplication $m:H\otimes H\rightarrow H$ is uniquely given by the relation $\left( H\otimes \varphi _{{X,Y}}\right)  \vartheta _{{H}}^{2}\left(m\right) _{{X,Y}}=\frho _{{X\boxtimes Y}} \varphi _{{X,Y}}$ while the reassociator $\omega \in \left( H\otimes H\otimes H\right)^{\ast }$ satisfies
\begin{equation}
\varphi _{{X\boxtimes Y,Z}} \left( \varphi _{{X,Y}}\otimes \forget
\left( Z\right) \right)  \vartheta _{\Bbbk }^{3}\left( \omega \right)
_{{X,Y,Z}}= \forget\left( \calpha _{{X,Y,Z}}^{-1}\right)  \varphi _{{X,Y\boxtimes
Z}} \left( \forget\left( X\right) \otimes \varphi _{{Y,Z}}\right)
\label{eq:associativity}
\end{equation}
 for all $X,Y,Z$ in $\cC $. The unit is the unique morphism $u:\Bbbk\rightarrow H$ such that
\begin{equation} \label{eq:defu}
\left(H\otimes \varphi_0\right)  \left(u\otimes \K\right)=\frho_{{\I}} \varphi_0.
\end{equation}
Observe that every $\forget(X)$ is an $H$-comodule via $\rho_{\forget(X)}=\delta_X$ and that $\vartheta_H^2(m)_{X,Y}=\rho_{\forget(X)\otimes\forget(Y)}$ and $u\otimes\K =\rho_{\K}$ are exactly the coactions that makes of ${^H\M}$ a monoidal category. Summing up, we have the following central result.

\begin{theorem}[{\cite[Theorem 2.2]{Majid-reconstruction}}]\label{th:majid}
Let $\left( \cC ,\boxtimes ,\I ,\calpha ,\clambda ,\crho \right) $ be an essentially small monoidal category and let $\left( \forget,\varphi,\varphi_0\right)$ $\forget:\cC \rightarrow \fvect $, be a quasi-monoidal functor. Then there is a coquasi-bialgebra $H$, unique up to isomorphism, universal with the property that $\forget$ factorizes as a monoidal functor $\forget^{H}:\cC \rightarrow {^{H}  \M }$ followed by the forgetful functor. Universal means that if $ H^{\prime }$ is another such coquasi-bialgebra then there is a unique map of coquasi-bialgebras $\epsilon:H\rightarrow H^{\prime }$ inducing a functor ${^{\epsilon}  \M }:{^{H}  \M }\rightarrow {^{H^{\prime }} \M }$ such that ${^{\epsilon} \M }\, \forget^{H} = \forget^{H^{\prime}}:\cC \rightarrow {^{H^{\prime}}  \M }$.
\end{theorem}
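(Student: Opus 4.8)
The plan is to run the classical transport-of-structure argument, letting the representing isomorphisms $\vartheta^{n}$ of \eqref{eq:repr} carry all the bookkeeping. First I would record why they exist: as $\cC$ is essentially small and $\M$ is cocomplete, the coend $H=\int^{X}\forget(X)\otimes\forget(X)^{\ast}$ exists, and since each $\forget(X)$ is finitely generated and projective there are natural isomorphisms $\Homk\big(\forget(X),V\otimes\forget(X)\big)\cong\Homk\big(\forget(X)\otimes\forget(X)^{\ast},V\big)$, so the universal property of the coend yields $\Nat(\forget,V\otimes\forget)\cong\Homk(H,V)$, i.e.\ $\vartheta$ for $n=1$. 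For general $n$ I would use that the dual of a tensor product of finitely generated projectives is, up to the canonical flip, the tensor product of the duals in the reverse order, together with the fact that $\otimes$ over $\K$ preserves colimits in each variable; Fubini for coends then identifies $\int^{X_{1},\dots,X_{n}}\forget^{n}\otimes(\forget^{n})^{\ast}$ with $H^{\otimes n}$ and produces $\vartheta^{n}$. The one structural fact worth isolating at this stage is the \emph{multiplicativity} of $\vartheta$: for $f\colon H^{\otimes m}\to V$ and $g\colon H^{\otimes n}\to W$, the transformation $\vartheta^{m+n}_{V\otimes W}(f\otimes g)$ is obtained from $\vartheta^{m}_{V}(f)$ and $\vartheta^{n}_{W}(g)$ by the shuffle that collects the $H$-tensorands to the front; this is immediate from the explicit formula for $\vartheta^{n}$ recalled just before the statement together with the naturality of $\tau$. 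This first step is where the real care is needed; once it is in place, everything below is a mechanical, if lengthy, unwinding of the monoidal-category axioms of $\cC$ through the $\vartheta^{n}$.

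With that in hand, I would check that $(H,m,u,\Delta,\varepsilon,\omega)$ is a coquasi-bialgebra by pushing the axioms of $\cC$ through $\vartheta$. Each $\forget(X)$ carries the coaction $\frho_{X}$; coassociativity and counitality of $\Delta$ are precisely the comodule axioms for $\frho$, which by injectivity of $\vartheta$ (for $n=1$) and the multiplicativity above reduce to the defining relations $\vartheta_{H\otimes H}(\Delta)=(H\otimes\frho)\frho$ and $\vartheta_{\K}(\varepsilon)=\mathrm{id}_{\forget}$. That $m$ and $u$ are coalgebra morphisms, and that the quasi-associativity $m(H\otimes m)\ast\omega=\omega\ast m(m\otimes H)$ holds, would follow from the fact that the diagonal coactions $\vartheta^{2}_{H}(m)_{X,Y}$ make the spaces $\forget(X)\otimes\forget(Y)$ into genuine $H$-comodules for which each $\varphi_{X,Y}$ is colinear (exactly the defining relation of $m$). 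The $3$-cocycle identity \eqref{eq:3-cocycle} and the unit relations \eqref{eq:quasi-unitairity cocycle2} I would obtain as the images under $\vartheta^{4}_{\K}$ and $\vartheta^{2}_{\K}$ of the Pentagon and Triangle Axioms in $\cC$, while convolution invertibility of $\omega$ reflects invertibility of $\calpha$, the convolution inverse being the transport of $\forget(\calpha)$. Under the neutrality hypothesis on $\forget$, the unit constraints of $\cC$ pull back to the identity of $\M$, and this forces the would-be maps $l,r$ to coincide with $\varepsilon$, so one lands automatically in the normalized form $l=r=\varepsilon$ adopted throughout the paper.

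Next I would produce the factorization. Putting $\forget^{H}(X):=\big(\forget(X),\frho_{X}\big)$ and $\forget^{H}(f):=\forget(f)$, which is $H$-colinear by naturality of $\frho$, defines a functor with $\cU\circ\forget^{H}=\forget$. Its monoidality amounts to three points: $\varphi_{X,Y}$ is an isomorphism in ${}^{H}\M$ for the diagonal coaction $\vartheta^{2}_{H}(m)_{X,Y}$ (the defining relation of $m$), $\varphi_{0}$ is one by \eqref{eq:defu}, and the coherence \eqref{eq:assfunctor} for $\forget^{H}$ is exactly \eqref{eq:associativity} once one observes that $\vartheta^{3}_{\K}(\omega)_{X,Y,Z}$ is the inverse of the associativity constraint of ${}^{H}\M$ evaluated on $\forget(X)\otimes\forget(Y)\otimes\forget(Z)$; the unit coherence is handled the same way via neutrality.

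Finally, universality. Given any coquasi-bialgebra $H'$ admitting a monoidal factorization $\forget=\cU'\circ\forget^{H'}$, the coactions $\rho'_{X}$ underlying $\forget^{H'}(X)$ assemble into a natural transformation $\forget\to H'\otimes\forget$ (naturality because $\forget^{H'}$ is a functor), hence equal $\vartheta_{H'}(\epsilon)$ for a unique $\K$-linear $\epsilon\colon H\to H'$, i.e.\ $\rho'_{X}=(\epsilon\otimes\forget(X))\frho_{X}$ for all $X$. Coassociativity and counitality of $\rho'$ together with multiplicativity of $\vartheta$ force $\epsilon$ to be a coalgebra map; monoidality of $\forget^{H'}$ forces $\epsilon\,m=m'(\epsilon\otimes\epsilon)$, $\epsilon\,u=u'$ and $\omega'(\epsilon\otimes\epsilon\otimes\epsilon)=\omega$, so $\epsilon$ is a morphism of coquasi-bialgebras; and $\rho'_{X}=(\epsilon\otimes\forget(X))\frho_{X}$ is precisely the identity ${}^{\epsilon}\M\circ\forget^{H}=\forget^{H'}$. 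Uniqueness of $\epsilon$ is the uniqueness built into $\vartheta$, and uniqueness of $H$ up to isomorphism then follows formally. Full details are in \cite{Majid-reconstruction} and \cite{Sch-Tannaka}.
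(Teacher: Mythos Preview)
Your proposal is correct and follows precisely the approach the paper itself outlines: the paper does not give a self-contained proof of this theorem but rather sketches the construction in \S\ref{ssec:classical}---the coend realization of $H$, the representing isomorphisms $\vartheta^{n}$, and the defining relations for $\Delta,\varepsilon,m,u,\omega$---and then states the theorem with a citation to \cite{Majid-reconstruction} and \cite{Sch-Tannaka}. Your write-up is essentially a fleshed-out version of that same sketch, including the universality argument and the verification that the axioms of a coquasi-bialgebra hold (the paper even remarks after the theorem that unitality of $m$ and $\omega$, not explicit in Majid's original, can be checked from the construction, and supplies that check in a suppressed passage).
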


In \cite{Majid-reconstruction} there's no explicit reference to the unitality of the multiplication or of the reassociator. Nevertheless, it can be checked that the above constructed maps satisfy all the conditions defining a coquasi-bialgebra.

\begin{remark}
\label{rem:reconstructioncoquasi}
Assume that $\K$ is a field, $\cC $ is already the category ${^{B}\fvect }$ of finite-dimensional comodules over a coquasi-bialgebra $B$ and $\forget$ is already the forgetful functor $\cU:{^{B} \M }_{f}\rightarrow {\fvect }$. Then $B$ itself is a representing object for $\Nat \left( \cU,-\otimes \cU\right)$ (cf. e.g. \cite[Lemma 2.2.1]{Sch-Tannaka}). In this case, the (co)multiplication, the (co)unit and the reassociator of $B$ already satisfy the defining relations for $\Delta$, $\varepsilon$, $m$, $u$ and $\omega$ stated in \S\ref{ssec:classical}, whence they are the unique coquasi-bialgebra structure maps induced on the $\K$-vector space $B$ by the isomorphisms \eqref{eq:repr} in view of Theorem \ref{th:majid}. 
\end{remark}

\begin{invisible}
\begin{lemma}
The reassociator $\omega$ and the multiplication $m$ are unital.
\end{lemma}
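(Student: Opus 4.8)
The plan is to check directly the two normalized unit conditions in \eqref{eq:quasi-unitairity cocycle2}, namely $m\circ(u\otimes H)=\id_H=m\circ(H\otimes u)$ and $\omega\circ(H\otimes u\otimes H)=\varepsilon\otimes\varepsilon$, starting from the defining relations for $m$, $u$ and $\omega$ recalled in \S\ref{ssec:classical}. Throughout we use that each $\vartheta^n$ is injective, that $\frho=\vartheta_H(\id_H)$ is a natural transformation, and that $\forget$ is neutral, i.e.\ \eqref{eq:unitfunctor} holds. The one preliminary observation is that $\frho_\I$ is the \emph{trivial} coaction: from \eqref{eq:defu} one reads off $\frho_\I\varphi_0=(H\otimes\varphi_0)(u\otimes\K)$, so that, under $\varphi_0\colon\K\xrightarrow{\sim}\forget(\I)$, $\frho_\I$ sends $v$ to $u(1)\otimes v$. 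This is what makes the unit object interact with $m$ and $\omega$.

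\emph{Unitality of $m$.} Evaluating the formula for $\vartheta^{2}$ from \S\ref{ssec:classical} at $(\I,Y)$ and using triviality of $\frho_\I$ gives $\vartheta_H^{2}(m)_{\I,Y}\bigl(\varphi_0(1)\otimes w\bigr)=\sum m\bigl(u(1)\otimes w_{-1}\bigr)\otimes\varphi_0(1)\otimes w_0$. On the other hand, in the defining relation $(H\otimes\varphi_{X,Y})\,\vartheta_H^{2}(m)_{X,Y}=\frho_{X\boxtimes Y}\,\varphi_{X,Y}$ take $X=\I$, use naturality of $\frho$ along $\forget(\clambda_Y)$ to replace $\frho_{\I\boxtimes Y}$ by $(H\otimes\forget(\clambda_Y^{-1}))\,\frho_Y\,\forget(\clambda_Y)$, and precompose with $\varphi_0\otimes\forget(Y)$; the first neutrality identity in \eqref{eq:unitfunctor} (for $\cC'=\M$, so that $\forget(\clambda_Y)\,\varphi_{\I,Y}\,(\varphi_0\otimes\forget(Y))$ is the canonical isomorphism $\K\otimes\forget(Y)\cong\forget(Y)$) then yields $\vartheta_H^{2}(m)_{\I,Y}\bigl(\varphi_0(1)\otimes w\bigr)=\sum w_{-1}\otimes\varphi_0(1)\otimes w_0$. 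Comparing the two expressions and cancelling the rank-one middle tensorand (apply $H\otimes\varphi_0^{-1}\otimes\forget(Y)$) we obtain $\sum m\bigl(u(1)\otimes w_{-1}\bigr)\otimes w_0=\frho_Y(w)$ for all $Y$ and $w$. Hence $\vartheta_H$ sends the endomorphism $h\mapsto m(u(1)\otimes h)$ of $H$ to $\frho=\vartheta_H(\id_H)$, so $m\circ(u\otimes H)=\id_H$ by injectivity. The identity $m\circ(H\otimes u)=\id_H$ follows symmetrically, working at $(X,\I)$ and using $\crho$ and the second half of \eqref{eq:unitfunctor} in place of $\clambda$ and the first.

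\emph{Unitality of $\omega$.} Evaluating the formula for $\vartheta^{3}$ at $(X,\I,Z)$ and again using triviality of $\frho_\I$ gives $\vartheta_\K^{3}(\omega)_{X,\I,Z}(x\otimes y\otimes z)=\sum\omega\bigl(x_{-1}\otimes u(1)\otimes z_{-1}\bigr)\,x_0\otimes y\otimes z_0$. It therefore suffices to prove that $\vartheta_\K^{3}(\omega)_{X,\I,Z}$ is the identity (after the canonical identification $\K\otimes(-)\cong(-)$): for then $\sum\omega(x_{-1}\otimes u(1)\otimes z_{-1})\,x_0\otimes z_0=x\otimes z$ for all $X,Z$, which says that $\vartheta_H^{2}$ carries the functional $a\otimes c\mapsto\omega(a\otimes u(1)\otimes c)$ and the functional $\varepsilon\otimes\varepsilon$ to the same natural transformation (the latter by the counit property $(\varepsilon\otimes\forget(X))\,\frho_X=\id$), whence $\omega\circ(H\otimes u\otimes H)=\varepsilon\otimes\varepsilon$ by injectivity of $\vartheta^{2}$. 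To show $\vartheta_\K^{3}(\omega)_{X,\I,Z}=\id$, solve \eqref{eq:associativity} for $\vartheta_\K^{3}(\omega)_{X,\I,Z}$ at $Y=\I$ and simplify its right-hand side: rewrite $\calpha_{X,\I,Z}^{-1}=(\crho_X^{-1}\boxtimes Z)\circ(X\boxtimes\clambda_Z)$ via the Triangle Axiom, push the images under $\forget$ of these two morphisms through the ambient $\varphi$'s by naturality of $\varphi$, and then use both halves of \eqref{eq:unitfunctor} to rewrite $\forget(\clambda_Z)\,\varphi_{\I,Z}$ and $\varphi_{X,\I}^{-1}\,\forget(\crho_X^{-1})$ in terms of the unit isomorphisms of $\M$ and of $\varphi_0^{\pm1}$. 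All the $\varphi$'s then cancel in pairs, leaving precisely $\id_{\forget(X)\otimes\forget(\I)\otimes\forget(Z)}$.

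The bulk of the work, and the only genuine subtlety, is this last simplification: it is a finite diagram chase in which every coherence isomorphism --- the $\varphi_{\bullet,\bullet}$, $\varphi_0$, and the unitors of $\cC$ and of $\M$ --- must be tracked and cancelled in the correct slot, and it uses the Triangle Axiom together with both neutrality identities. The argument for $m$ is of the same nature but lighter, since only one unitor and one half of \eqref{eq:unitfunctor} enter at a time.
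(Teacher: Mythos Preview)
Your proof is correct and follows essentially the same approach as the paper's own argument: both hinge on the observation that $\frho_{\I}$ is the trivial coaction determined by $u$ (so that evaluating $\vartheta^{n}$ with $\I$ in a slot amounts to plugging $u$ into that slot of the map), and then combine the defining relations for $m$ and $\omega$ with naturality of $\frho$ and $\varphi$, the Triangle Axiom, and the neutrality identities \eqref{eq:unitfunctor} to conclude via injectivity of the $\vartheta^{n}$. The only cosmetic difference is that the paper packages the ``insert $\I$, get $u$'' step as a general identity for $\vartheta^{n+1}$ before specializing, whereas you redo the relevant instances by hand.
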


\begin{proof}
Since the detailed computations would be too technical and out of the purposes of this paper, let us give only a sketch of the proof. 
Denote temporarily by $H^{(i)}$ the $i$-th tensorand in $H^{\otimes n}$ for some $n\geq 1$ and some $1\leq i\leq n$ and by $\varphi_0^{(i)}$ the morphism $\varphi_0$ applied in the $i$-th position of a tensor product. The following computation
\begin{equation*}
\gbeg{11}{12}
\gvac{1}\got{1}{\micro{X_1}}\gvac{1}\got{1}{\micro{X_2}}\got{3}{\micro{\cdots_{}}}\gvac{1}\got{2}{\micro{\cdots_{}}}\got{1}{\micro{X_n}}\gnl
\gvac{1}\gcl{1}\gvac{1}\gcl{1}\gvac{3}\gmpub{\varphi_0}\gvac{2}\gcl{1}\gnl
\glcm\glcm\gvac{2}\glcm\gvac{1}\glcm\gnl
\gcl{1}\gcl{1}\gcl{1}\gcl{1}\gcn{2}{1}{5}{1}\gvac{1}\gcl{1}\gcn{2}{1}{3}{1}\gcl{1}\gnl
\gcl{1}\gbr\gbr\gvac{2}\gbr\gvac{1}\gcl{1}\gnl
\gcl{1}\gcl{1}\gbr\gcn{1}{1}{1}{3}\gvac{1}\gcn{1}{1}{3}{1}\gvac{1}\gcl{1}\gvac{1}\gcl{1}\gnl
\gcl{1}\gcl{1}\gcl{1}\gcn{1}{1}{1}{3}\gvac{1}\gbr\gvac{1}\gcl{1}\gvac{1}\gcl{1}\gnl
\gcl{1}\gcl{1}\gcl{1}\gvac{1}\gbr\gcl{1}\gvac{1}\gcl{1}\gvac{1}\gcl{1}\gnl
\gcl{1}\gcl{1}\gcl{1}\gvac{1}\gcl{1}\gcl{1}\gcl{1}\gvac{1}\gcl{1}\gvac{1}\gcl{1}\gnl
\gsbox{5}\gnot{\hspace{16mm}f}\gvac{5}\gcl{1}\gcl{1}\gvac{1}\gcl{1}\gvac{1}\gcl{1}\gnl
\gvac{2}\gcl{1}\gvac{2}\gcl{1}\gcl{1}\gvac{1}\gcl{1}\gvac{1}\gcl{1}\gnl
\gvac{2}\got{1}{\micro{V_{}}}\gvac{2}\got{1}{\micro{X_1}}\got{1}{\micro{X_2}}\got{1}{\micro{\cdots_{}}}\got{1}{\micro{\I_{}}}\got{1}{\micro{\cdots_{}}}\got{1}{\micro{X_n}}
\gend
=
\gbeg{11}{11}
\gvac{1}\got{1}{\micro{X_1}}\gvac{1}\got{1}{\micro{X_2}}\got{3}{\micro{\cdots_{}}}\gvac{1}\got{2}{\micro{\cdots_{}}}\got{1}{\micro{X_n}}\gnl
\glcm\glcm\gvac{2}\gu{1}\gmpub{\varphi_0}\gvac{1}\glcm\gnl
\gcl{1}\gcl{1}\gcl{1}\gcl{1}\gcn{2}{1}{5}{1}\gvac{1}\gcl{1}\gcn{2}{1}{3}{1}\gcl{1}\gnl
\gcl{1}\gbr\gbr\gvac{2}\gbr\gvac{1}\gcl{1}\gnl
\gcl{1}\gcl{1}\gbr\gcn{1}{1}{1}{3}\gvac{1}\gcn{1}{1}{3}{1}\gvac{1}\gcl{1}\gvac{1}\gcl{1}\gnl
\gcl{1}\gcl{1}\gcl{1}\gcn{1}{1}{1}{3}\gvac{1}\gbr\gvac{1}\gcl{1}\gvac{1}\gcl{1}\gnl
\gcl{1}\gcl{1}\gcl{1}\gvac{1}\gbr\gcl{1}\gvac{1}\gcl{1}\gvac{1}\gcl{1}\gnl
\gcl{1}\gcl{1}\gcl{1}\gvac{1}\gcl{1}\gcl{1}\gcl{1}\gvac{1}\gcl{1}\gvac{1}\gcl{1}\gnl
\gsbox{5}\gnot{\hspace{16mm}f}\gvac{5}\gcl{1}\gcl{1}\gvac{1}\gcl{1}\gvac{1}\gcl{1}\gnl
\gvac{2}\gcl{1}\gvac{2}\gcl{1}\gcl{1}\gvac{1}\gcl{1}\gvac{1}\gcl{1}\gnl
\gvac{2}\got{1}{\micro{V_{}}}\gvac{2}\got{1}{\micro{X_1}}\got{1}{\micro{X_2}}\got{1}{\micro{\cdots_{}}}\got{1}{\micro{\I_{}}}\got{1}{\micro{\cdots_{}}}\got{1}{\micro{X_n}}
\gend
=
\gbeg{9}{8}
\gvac{1}\got{1}{\micro{X_1}}\gvac{1}\got{1}{\micro{X_2}}\got{2}{\micro{\cdots_{}}}\got{2}{\micro{\cdots_{}}}\got{1}{\micro{X_n}}\gnl
\glcm\glcm\gvac{3}\glcm\gnl
\gcl{1}\gbr\gcn{2}{1}{1}{5}\gvac{1}\gcn{1}{1}{3}{1}\gvac{1}\gcl{1}\gnl
\gcl{1}\gcl{1}\gcn{2}{1}{1}{5}\gvac{1}\gbr\gvac{1}\gcl{1}\gnl
\gcl{1}\gcl{1}\gu{1}\gvac{1}\gbr\gcl{1}\gvac{1}\gcl{1}\gnl
\gsbox{5}\gnot{\hspace{16mm}f}\gvac{5}\gcl{1}\gcl{1}\gvac{1}\gcl{1}\gnl
\gvac{2}\gcl{1}\gvac{2}\gcl{1}\gcl{1}\gmpub{\varphi_0}\gcl{1}\gnl
\gvac{2}\got{1}{\micro{V_{}}}\gvac{2}\got{1}{\micro{X_1}}\got{1}{\micro{X_2}}\got{1}{\micro{\cdots_{}\I_{}\cdots_{}}}\got{1}{\micro{X_n}}
\gend
\end{equation*}
shows that if $f:H^{\otimes n+1}\to V$ is an arrow in $\M$ for $V$ a $\K$-module, then
\begin{equation}\label{eq:thetaunittech}
\vartheta_{ {V}}^{{n+1}}\left(f\right)_{ {X_1,\ldots,\I, X_i ,\ldots,X_n}} \circ \varphi_{0}^{(i)} = \varphi_{0}^{(i+1)} \circ \vartheta_{ {V}}^{{n}}\left(f\circ u_{H^{(i)}}\right)_{ {X_1,\ldots,X_n}}.
\end{equation}
By omitting the constraints $l$ and $r$, the relations in \eqref{eq:unitfunctor} become
\begin{equation*}
\forget\left(\clambda_{ {X}}\right) \circ \varphi_{ {\I ,X}} \circ \left(\varphi_{0}\otimes \forget(X)\right) = \id_{\forget(X)} =  \forget\left(\crho_{ {X}}\right) \circ \varphi_{ {X ,\I}} \circ \left(\forget(X)\otimes \varphi_{0}\right),
\end{equation*}
so that Equation \eqref{eq:thetaunittech} can be rewritten as
\begin{subequations}\label{eq:thetan}
\begin{gather}
\forget\left(\clambda_{ {X_{i}}}\right)\circ \varphi_{ {\I ,X_{i}}}\circ \vartheta_{ {V}}^{{n+1}}\left(f\right)_{ {X_1,\ldots,\I ,X_i,\ldots,X_n}} = \vartheta_{ {V}}^{{n}}\left(f\circ u_{H^{(i)}}\right)_{ {X_1,\ldots,X_n}}\circ \forget\left(\clambda_{ {X_i}}\right)\circ \varphi_{ {\I ,X_i}}\quad\text{or} \label{eq:thetan1} \\
\forget\left(\crho_{ {X_{i-1}}}\right)\circ \varphi_{ {X_{i-1},\I }}\circ \vartheta_{ {V}}^{{n+1}}\left(f\right)_{ {X_1,\ldots,X_{i-1},\I ,\ldots,X_n}} = \vartheta_{ {V}}^{{n}}\left(f\circ u_{H^{(i)}}\right)_{ {X_1,\ldots,X_n}}\circ \forget\left(\clambda_{ {X_{i}}}\right)\circ \varphi_{ {\I,X_{i}}} \label{eq:thetan2}
\end{gather}
\end{subequations}
(notice that Equation \eqref{eq:thetan2} holds only if $1<i<n$, but as we will see this is the only case we are interested in). By naturality of $\vartheta_{ {H}}(\id_H)$, one checks that
\begin{align*}
\vartheta _{ {H}} \left( m \circ \left( u\otimes H\right)  \right)_{ {X}} \circ \forget(\clambda_{ {X}}) & \stackrel{\eqref{eq:thetan1}}{=} \left(H\otimes \forget(\clambda_{ {X}})\right) \circ \left(H\otimes \varphi_{ {\I ,X}}\right)\circ \vartheta _{ {H}}^{2}\left(m\right) _{ {\I ,X}}\circ \varphi_{ {\I ,X}}^{-1} \\
 & = \left(H\otimes \forget(\clambda_{ {X}})\right)\circ \vartheta_{ {H}}\left(\id_H\right)_{ {\I \boxtimes X}}=\vartheta_{ {H}}(\id_H)_{ {X}}\circ \forget(\clambda_{ {X}})
\end{align*}
 for all $X$ in $\cM$, which means that $m \circ \left( u\otimes H\right) =\id_{H}$ as desired. Unitality on the other side may be checked similarly. Let us conclude with the unitality of $\omega$. As above, we may compute
\begin{align*}
& \vartheta _{{\Bbbk} }^{2}\left( \omega \circ \left( H\otimes u\otimes H\right) \right)_{X,Y} \\
 & \stackrel{\eqref{eq:thetan2}}{=} \left(\forget(\crho_{ {X}})\otimes \forget(Y)\right) \circ \left(\varphi_{ {X,\I} }\otimes \forget(Y)\right) \circ \vartheta_{ {\Bbbk}}^3(\omega)_{ {X,\I ,Y}} \circ \left(\forget(X)\otimes \varphi_{ {\I ,Y}}^{-1}\right) \circ \left(\forget(X)\otimes \forget\left(\clambda_{ {Y}}^{-1}\right)\right) \\
 &  = \left(\forget(\crho_{ {X}})\otimes \forget(Y)\right) \circ \varphi _{ {X\boxtimes \I ,Y}}^{-1}  \circ \forget\left( \calpha _{ {X,\I ,Y}}^{-1}\right) \circ \varphi _{ {X,\I \boxtimes Y}} \circ \left(\forget(X)\otimes \forget\left(\clambda_{ {Y}}^{-1}\right)\right) \\
 & = \varphi _{ {X,Y}}^{-1}\circ  \forget(\crho_{ {X}}\boxtimes Y) \circ  \forget\left( \calpha _{ {X,
\I ,Y}}^{-1}\right) \circ  \forget\left(X\boxtimes \clambda_{ {Y}}^{-1}\right)\circ  \varphi _{ {X, Y}}\\
 &  = \id_{\forget(X)\otimes \forget(Y)}=\vartheta _{\Bbbk }^{2}\left( m_{\Bbbk }\circ  \left( \varepsilon \otimes
\varepsilon \right) \right)
\end{align*}
 for all $X,Y$ in $\cM$. Thus, $\omega \left( x\otimes 1_{H}\otimes y\right) =\varepsilon \left(
x\right) \varepsilon \left( y\right) $ for all $x,y\in H$.
\end{proof}
\end{invisible}

\subsection{The rigid case}

We recall briefly some facts about rigid objects in a monoidal category.

\begin{definition}
A \emph{right dual
object} $X^{\star  }$ of $X$ in $\cC $ is a triple $\left(
X^{\star  },\ev{X},\db{X}\right) $ in which $
X^{\star  }$ is an object in $\cC $ and $\ev{X}:X\boxtimes X^{\star  }\rightarrow \I $ and $\db{X}:\I \rightarrow X^{\star
}\boxtimes X$ are morphisms in $\cC $, called \emph{evaluation} and \emph{dual basis} respectively, that satisfy
\begin{gather}
\left( \ev{X}\boxtimes X\right) \, \calpha _{ {X,X^{\star 
},X}}^{-1}\, \left( X\boxtimes \db{X}\right) =\mathrm{id}
_{X},  \label{eq:idX} \\
\left( X^{\star  }\boxtimes \ev{X}\right) \, \calpha
_{ {X^{\star  },X,X^{\star  }}}\, \left( \db{X}\boxtimes
X^{\star  }\right) =\mathrm{id}_{X^{\star  }}.  \label{eq:idXstar}
\end{gather}
An object which admits a right dual object is said to be \emph{right rigid} (or \emph{dualizable}). If every object in $\cC $ is right rigid, then we say that $\cC $ is \emph{right rigid}.
\end{definition}

We will often refer to right dual objects simply as \emph{right duals} or just \emph{duals}.

%\begin{example}%\label{ex:classicalduals}
%The classical example of dualizable objects is provided by finitely-generated and projective $\K$-modules. Given $V$ in $\M_f$ with basis $\left\{v_i\mid i=1,\ldots,  d_{ {V}}\right\}$ there exist elements $v^j\in V^\ast$ such that $v^j\left(v_i\right)=\delta_{ij}$ (the Kronecker delta) for all $1\leq i,j\leq  d_{ {V}}$. The evaluation map is $\ev{V}\left(v\otimes f\right)=f(v)$ for $v\in V$ and $f\in V^\ast$. The dual basis is given by $\db{V}(1_{\Bbbk})=\sum_{i=1}^{ d_{ {V}}}v^i\otimes v_i$. For all $u\in V$, $f\in V^{\ast }$, relations \eqref{eq:idX} and \eqref{eq:idXstar} amounts to the well-known $u=\sum_{i=1}^{ d_{ {V}} }v^{i}\left( u\right)v_{i}$ and $f=\sum_{i=1}^{ d_{ {V}} }f\left( v_{i}\right) v^{i}$.
%\end{example}

\begin{remark}
Once chosen a right dual object $X^\star$ for every object $X$ in a right rigid monoidal category $\cC $, we have that
the assignment $\left( -\right) ^{\star  }:\cC ^{\mathrm{op}
}\rightarrow \cC $ defines a functor and $\ev{}:\left(
-\right) \boxtimes \left( -\right) ^{\star  }\rightarrow \I $
and\ $\db{}:\I \rightarrow \left( -\right) ^{\star 
}\boxtimes \left( -\right) $ define dinatural transformations\footnote{More precisely, these should be referred to as \emph{wedges}, since they are dinatural transformations to a constant functor. However, we avoided this in order to spare the proliferation of terminology. For the definition of dinatural transformations and wedges we refer to \cite[\S9.4]{Mac}.}, i.e., for every $X,Y$ and $f:X\rightarrow Y$ in $\cC$ we have $\left(f^\star\boxtimes Y\right)\db{Y}=\left(X^\star\boxtimes f\right)\db{X}$ and $\ev{X}\left(X\boxtimes f^\star\right)=\ev{Y}\left(f\boxtimes Y^\star\right)$.
\end{remark}

From now on, let us assume that $\cC$ is right rigid. If we have a different choice $\left( -\right) ^{\vee }:\mathcal{C}^{\mathrm{op}}\rightarrow \cC $ of right dual objects, then we write $\ev{}^{\left( \star  \right) }$ and $\db{}^{\left( \star  \right) }$ to mean the evaluation and dual basis maps associated with the dual $\left( -\right) ^{\star  }$ and $\ev{} ^{\left( \vee \right) }$ and $\db{}^{\left( \vee \right) }$ to mean those associated with $\left( -\right) ^{\vee }$. We know (see e.g. \cite[\S9.3]{Maj}) that for every $X$ in $\cC $, its right dual is unique up to isomorphism whenever it exists, i.e. we have an isomorphism $\kappa _{ {X}}:X^{\star  }\rightarrow X^{\vee }$ in $\cC $ given by the composition
\begin{equation}\label{eq:kappa}
\kappa _{ {X}}:=\crho _{ {X^{\vee }}} \left( X^{\vee }\boxtimes \ev{X}^{\left( \star  \right) }\right)  \calpha _{ {X^{\vee},X,X^{\star  }}} \left( \db{X}^{\left( \vee \right)}\boxtimes X^{\star  }\right)  \clambda _{ {X^{\star}  }}^{-1}.
\end{equation}

\begin{lemma}\label{lemma:uniquenessdual}
The isomorphism $\kappa _{X}:X^{\star  }\rightarrow X^{\vee }$ is natural in $X$ and
the dinatural transformations $\ev{}^{\left( \star  \right) }$, $
\db{}^{\left( \star  \right) }$, $\ev{}^{\left( \vee \right) }$
and $\db{}^{\left( \vee \right) }$ satisfy
\begin{equation}
\left( \kappa \boxtimes \mathrm{id}\right)  \db{}^{\left(
\star  \right) }=\db{}^{\left( \vee \right) }\text{\qquad and\qquad }
\ev{}^{\left( \vee \right) } \left( \mathrm{id}\boxtimes \kappa
\right) =\ev{}^{\left( \star  \right) }\text{.}  \label{eq:compkappa}
\end{equation}
\end{lemma}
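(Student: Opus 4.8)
Since $\kappa_X$ is already known to be an isomorphism (see \cite[\S9.3]{Maj}), it only remains to verify the naturality in $X$ and the two identities \eqref{eq:compkappa}. The plan is to establish \eqref{eq:compkappa} first and to deduce naturality from it afterwards. Both parts become transparent in the graphical calculus for monoidal categories, where the constraints $\calpha$, $\clambda$, $\crho$ are suppressed by Mac Lane's coherence theorem \cite{Mac}; reinstating them in the explicit formulas requires only the Pentagon and Triangle axioms and the naturality of the constraints, so below I merely flag the points at which the rigidity identities \eqref{eq:idX} and \eqref{eq:idXstar} intervene.

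\emph{The identities \eqref{eq:compkappa}.} For the first one I would substitute the definition \eqref{eq:kappa} of $\kappa_X$ into $\left(\kappa_X\boxtimes X\right)\db{X}^{(\star)}$, use the bifunctoriality of $\boxtimes$ together with the naturality of $\calpha$ and $\clambda$ to slide the factor $\db{X}^{(\vee)}$ past the remaining morphisms, and then recognise that the cup $\db{X}^{(\star)}$ appearing in $\left(\kappa_X\boxtimes X\right)\db{X}^{(\star)}$, the cap $\ev{X}^{(\star)}$ coming from $\kappa_X$, and the intervening associators assemble precisely into the left-hand side of a zigzag identity of type \eqref{eq:idX} applied to the $X$-leg of $\db{X}^{(\vee)}$; the composite therefore collapses to $\db{X}^{(\vee)}$, as claimed. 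The second identity in \eqref{eq:compkappa} is entirely analogous, the roles of the two families of duals $(-)^\star$ and $(-)^\vee$ being interchanged.

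\emph{Naturality.} Fix $f\colon X\to Y$ in $\cC$; the claim is $\kappa_X\circ f^\star=f^\vee\circ\kappa_Y$, both sides being morphisms $Y^\star\to X^\vee$. I would first observe that $g\mapsto\left(g\boxtimes Y\right)\db{Y}^{(\star)}$ is injective as a map $\Homk(Y^\star,X^\vee)\to\Homk(\I,X^\vee\boxtimes Y)$: indeed $g$ can be recovered from $\left(g\boxtimes Y\right)\db{Y}^{(\star)}$ by tensoring with $Y^\star$ on the right and composing with $X^\vee\boxtimes\ev{Y}^{(\star)}$ and the appropriate unit and associativity constraints, the verification being exactly the zigzag \eqref{eq:idXstar} for $Y$ together with the naturality of the constraints. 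Then, using the dinaturality of $\db{}^{(\star)}$ and of $\db{}^{(\vee)}$ (with respect to the two chosen families of duals, as recalled above) and the first identity in \eqref{eq:compkappa} for $X$ and for $Y$, one computes
\begin{equation*}
\left(\left(\kappa_X f^\star\right)\boxtimes Y\right)\db{Y}^{(\star)}=\left(X^\vee\boxtimes f\right)\db{X}^{(\vee)}=\left(\left(f^\vee\kappa_Y\right)\boxtimes Y\right)\db{Y}^{(\star)},
\end{equation*}
whence the injectivity just observed forces $\kappa_X f^\star=f^\vee\kappa_Y$.

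I expect the genuine difficulty to lie entirely in the proof of \eqref{eq:compkappa}: the reduction to the zigzag identities is immediate pictorially but, carried out symbolically, amounts to a somewhat lengthy reorganisation of the coherence constraints. Everything else is formal bookkeeping.
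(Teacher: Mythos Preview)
Your proof is correct. The paper in fact omits any proof of this lemma, treating it as a standard fact about duals in monoidal categories; your argument---first verifying \eqref{eq:compkappa} via the zigzag identity \eqref{eq:idX} (for the $(\star)$-duality in the first case and for the $(\vee)$-duality in the second), and then deducing naturality from \eqref{eq:compkappa} together with the dinaturality of $\db{}$ and the injectivity trick coming from \eqref{eq:idXstar}---fills this gap cleanly. One tiny remark: in your sketch of the second identity you say the roles of $(\star)$ and $(\vee)$ are ``interchanged'', but more precisely both identities rest on the same zigzag \eqref{eq:idX}, only applied to the $(\star)$-duality for the first and to the $(\vee)$-duality for the second; this is harmless and does not affect the validity of the argument.
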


\begin{notation}
In what follows we will retrieve some computations in terms of braided diagrams in the category of $\K$-modules. We will adopt the following notation
\begin{equation*}
\Delta=\gbeg{2}{3} \got{2}{\micro{H}}\gnl \gcmu\gnl \got{1}{\micro{H}}\got{1}{\micro{H}}\gend,
\quad
\varepsilon=\gbeg{1}{3} \got{1}{\micro{H}}\gnl \gcu{1}\gnl \gvac{1} \gend,
\quad
u=\gbeg{1}{3} \gvac{1}\gnl \gu{1}\gnl \got{1}{\micro{H}} \gend,
\quad
m=\gbeg{2}{3} \got{1}{\micro{H}}\got{1}{\micro{H}}\gnl \gmu\gnl \got{2}{\micro{H}} \gend,
\quad
\tau_{\micro{V},\micro{W}}=\gbeg{2}{3} \got{1}{\micro{V}}\got{1}{\micro{W}}\gnl \gbr\gnl \got{1}{\micro{W}}\got{1}{\micro{V}} \gend,
\quad
\frho_\micro{X}=\gbeg{2}{3} \gvac{1}\got{1}{\micro{X}}\gnl \glcm\gnl \got{1}{\micro{H}}\got{1}{\micro{X}} \gend.
\end{equation*}
We will also omit to write the functor $\forget$ in braided diagrams.
\end{notation}

Henceforth and unless stated otherwise, we assume also that a choice $(-)^\star$ of dual objects has been performed. Let us consider the following maps
\begin{equation}
\ev{\forget\left( X\right)}:=\varphi_0^{-1} \forget\left( 
\ev{X}\right)  \varphi _{ {X,X^{\star}  }} \qquad \text{and} \qquad \db{\forget\left( X\right)} :=\varphi _{ {X^{\star 
},X}}^{-1} \forget\left( \db{X}\right)  \varphi_0,
\label{eq:evaluation}
\end{equation}
which we will represent simply as $\ev{\forget(X)}=\gbeg{2}{2} \got{1}{\micro{X}}\got{1}{\micro{X^\star}}\gnl \gev \gend$ and $\db{\forget(X)}=\gbeg{2}{2}  \gdb\gnl \got{1}{\micro{X^\star}}\got{1}{\micro{X}} \gend$.

These do not endow $\forget\left( X^{\star  }\right) $ with a
structure of right dual object of $\forget\left( X\right) $ in the
category $\M$ because the functor $\forget:\cC \rightarrow  \M $ does not satisfy the associativity
condition \eqref{eq:assfunctor}. Nevertheless, we have the following result, whose proof follows easily from the definitions and the dinaturality of  $\ev{}$ and $\db{}$.

\begin{lemma}
\label{Lemma:dinaturality}The assignments $\ev{\forget\left(
X\right) }$ and $\db{\forget\left( X\right)}$ defined in \eqref
{eq:evaluation} give rise to dinatural
transformations $\ev{\forget\left( -\right) }:\forget
\otimes \forget^{\star  }\rightarrow \Bbbk $ and $\db{\forget\left( -\right) }:\Bbbk \rightarrow \forget^{\star 
}\otimes \forget$.
\end{lemma}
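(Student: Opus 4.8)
The plan is to verify the two wedge conditions directly from the definition~\eqref{eq:evaluation}, using only the naturality of $\varphi$, the functoriality of $\forget$, and the dinaturality of $\ev{}$ and $\db{}$ in $\cC$ recorded in the remark preceding Lemma~\ref{lemma:uniquenessdual}. Note in advance that no appeal is made to the associativity compatibility~\eqref{eq:assfunctor}: this is exactly why $\forget(X^\star)$ need not be a genuine right dual of $\forget(X)$ in $\M$ even though its evaluation and dual-basis maps are still dinatural in $X$. Fix a morphism $f\colon X\to Y$ in $\cC$, so that $f^\star\colon Y^\star\to X^\star$. For $\ev{\forget(-)}$ I must show
\[
\ev{\forget(Y)}\bigl(\forget(f)\otimes\forget(Y^\star)\bigr)=\ev{\forget(X)}\bigl(\forget(X)\otimes\forget(f^\star)\bigr)\colon\forget(X)\otimes\forget(Y^\star)\to\K ,
\]
and for $\db{\forget(-)}$ I must show
\[
\bigl(\forget(f^\star)\otimes\forget(Y)\bigr)\db{\forget(Y)}=\bigl(\forget(X^\star)\otimes\forget(f)\bigr)\db{\forget(X)}\colon\K\to\forget(X^\star)\otimes\forget(Y) .
\]

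For the evaluation I would start from the right-hand side, expand $\ev{\forget(X)}=\varphi_0^{-1}\,\forget(\ev{X})\,\varphi_{X,X^\star}$, and slide $\forget(X)\otimes\forget(f^\star)$ across $\varphi$ by naturality, turning the composite into $\varphi_0^{-1}\,\forget(\ev{X})\,\forget(X\boxtimes f^\star)\,\varphi_{X,Y^\star}$. Contracting $\forget(\ev{X})\,\forget(X\boxtimes f^\star)=\forget\bigl(\ev{X}\,(X\boxtimes f^\star)\bigr)$ and invoking the dinaturality $\ev{X}\,(X\boxtimes f^\star)=\ev{Y}\,(f\boxtimes Y^\star)$ in $\cC$, then splitting $\forget$ again and using naturality of $\varphi$ once more to rewrite $\forget(f\boxtimes Y^\star)\,\varphi_{X,Y^\star}=\varphi_{Y,Y^\star}\,\bigl(\forget(f)\otimes\forget(Y^\star)\bigr)$, the composite becomes $\varphi_0^{-1}\,\forget(\ev{Y})\,\varphi_{Y,Y^\star}\,\bigl(\forget(f)\otimes\forget(Y^\star)\bigr)=\ev{\forget(Y)}\bigl(\forget(f)\otimes\forget(Y^\star)\bigr)$, the left-hand side. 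The dual-basis identity is the mirror-image chase: expand $\db{\forget(X)}=\varphi_{X^\star,X}^{-1}\,\forget(\db{X})\,\varphi_0$, use naturality of $\varphi^{-1}$ to turn $\bigl(\forget(X^\star)\otimes\forget(f)\bigr)\,\varphi_{X^\star,X}^{-1}$ into $\varphi_{X^\star,Y}^{-1}\,\forget(X^\star\boxtimes f)$, contract inside $\forget$, apply the dinaturality $(X^\star\boxtimes f)\,\db{X}=(f^\star\boxtimes Y)\,\db{Y}$, split $\forget$ once more, and use naturality of $\varphi^{-1}$ a second time to reach $\bigl(\forget(f^\star)\otimes\forget(Y)\bigr)\db{\forget(Y)}$.

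There is no genuine obstacle here: each identity is a three-move diagram chase, and the only delicate point is bookkeeping---keeping track of which instance of $\varphi$ (with which pair of objects) is invoked at each naturality step, since the tensorands change as $f$ and $f^\star$ are pushed past $\varphi$. If preferred, both chases can equally be carried out in the braided-string notation fixed just above, where $\ev{X}$, $\db{X}$, $f$ and $f^\star$ appear as boxes and each step is a box sliding through the (suppressed) functor $\forget$, the identifications $\varphi$ and $\varphi_0$ serving only to record that $\forget$ carries $\boxtimes$ to $\otimes$.
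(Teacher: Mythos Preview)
Your argument is correct and is exactly the routine verification the paper has in mind: it explicitly omits the proof, saying it ``follows easily from the definitions and the dinaturality of $\ev{}$ and $\db{}$,'' which is precisely the naturality-of-$\varphi$ plus dinaturality-in-$\cC$ chase you carry out.
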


\begin{remark}\label{rem:rigidity}
Recall that if $\left( \cF,\phi ,\phi _{0}\right) :\left( 
\cC ,\boxtimes ,\I \right) \rightarrow \left( \mathcal{D}
,\circledast ,\mathbb{J}\right) $ is a monoidal functor between monoidal
categories and if $X$ in $\cC $ has a right dual $\left( X^{\star  },\ev{X},\db{X}\right) $, then $\cF\left( X\right) $ is right rigid with dual object $
\cF\left( X^{\star  }\right) $ and structure maps
\begin{equation*} % \label{eq:dualmonoidal}
\ev{\cF\left( X\right)}= \phi _{0}^{-1}\, \cF\left( \ev{X}\right) \, \phi
_{ {X,X^{\star}  }}\qquad \text{and}\qquad \db{\cF\left(
X\right) }=\phi _{ {X^{\star  },X}}^{-1}\, \cF\left( \db{X}\right) \, \phi _{0}
\end{equation*}
(cf. e.g. \cite[page 86]{Street}). Therefore, even if $\forget\left(X^{\star  }\right) $ is not a right dual of $\forget\left( X\right) $
in $ \M $, $\left( \forget\left( X^{\star  }\right) ,\frho_{ {X^{\star}  }}\right) $ is a right dual of $\left( \forget\left(X\right) ,\frho _{ {X}}\right) $ in ${^{H} \M }$ because $\forget^{H}:\cC \rightarrow {^{H} \M }$ is monoidal.
Evaluation and coevaluation maps are the same given in \eqref{eq:evaluation}
and they are morphisms of comodules. In particular,
\begin{gather}
\gbeg{6}{7}
\gvac{1}\got{1}{\micro{X}}\gvac{4}\gnl
\gvac{1}\gcl{1}\gvac{1}\gwdb{3}\gnl
\glcm\glcm\glcm\gnl
\gcl{1}\gbr\gbr\gcl{1}\gnl
\gcl{1}\gcl{1}\gbr\gcl{1}\gcl{1}\gnl
\gsbox{3}\gnot{\hspace{9mm}\omega}\gvac{3}\gev\gcl{1}\gnl
\gvac{5}\got{1}{\micro{X}}
\gend
=
\gbeg{1}{3}
\got{1}{\micro{X}}\gnl
\gcl{1}\gnl
\got{1}{\micro{X}}
\gend,
\qquad
\gbeg{6}{7}
\gvac{5}\got{1}{\micro{X^\star}}\gnl
\gvac{1}\gwdb{3}\gvac{1}\gcl{1}\gnl
\glcm\glcm\glcm\gnl
\gcl{1}\gbr\gbr\gcl{1}\gnl
\gcl{1}\gcl{1}\gbr\gcl{1}\gcl{1}\gnl
\gsbox{3}\gnot{\hspace{9mm}\omega^{-1}}\gvac{3}\gcl{1}\gev\gnl
\gvac{3}\got{1}{\micro{X^{\star}}}\gvac{2}
\gend
=
\gbeg{1}{3}
\got{1}{\micro{X^{\star}}}\gnl
\gcl{1}\gnl
\got{1}{\micro{X^{\star}}}
\gend, \label{eq:Xdual} \\
\gbeg{4}{5}
\gvac{1}\gwdb{3}\gnl
\glcm\glcm\gnl
\gcl{1}\gbr\gcl{1}\gnl
\gmu\gcl{1}\gcl{1}\gnl
\got{2}{\micro{H}}\got{1}{\micro{X^\star}}\got{1}{\micro{X}}
\gend
=
\gbeg{3}{2}
\gu{1}\gdb\gnl
\got{1}{\micro{H}}\got{1}{\micro{X^\star}}\got{1}{\micro{X}}
\gend,
\qquad
\gbeg{4}{5}
\gvac{1}\got{1}{\micro{X}}\gvac{1}\got{1}{\micro{X^\star}}\gnl
\glcm\glcm\gnl
\gcl{1}\gbr\gcl{1}\gnl
\gmu\gev\gnl
\got{2}{\micro{H}}\gvac{2}
\gend
=
\gbeg{3}{3}
\gvac{1}\got{1}{\micro{X}}\got{1}{\micro{X^\star}}\gnl
\gu{1}\gev\gnl
\got{1}{\micro{H}}\gvac{2}
\gend, \label{eq:dbcolin}
\end{gather}
%\begin{gather}
%(\ev{\forget(X)} \otimes \forget(X)) \circ \vartheta_\K^3(\omega)_{X,X^\star,X} \circ (\forget(X)\otimes \db{\forget(X)}) = \id_{\forget(X)}, \label{eq:Xdual} \\
%\left( \forget(X^\star) \otimes \ev{\forget(X)}\right) \circ \vartheta_\K^3\left(\omega^{-1}\right)_{X^\star,X,X^\star} \circ \left( \db{\forget(X)} \otimes \forget(X^\star)\right) = \id_{\forget(X^\star)}, \label{eq:Xstardual}\\
%\vartheta_H^2(m)_{X^{\star},X}\circ \db{\forget(X)} = \left(H\otimes \db{\forget(X)}\right) \circ \left( u \otimes \K \right), \label{eq:dbcolin}\\
%\left( u \otimes \K \right) \circ \ev{\forget(X)} = \left(H\otimes \ev{\forget(X)}\right) \circ \vartheta_H^2(m)_{X,X^{\star}}, \label{eq:evcolin}
%\end{gather}
where \eqref{eq:Xdual} %and \eqref{eq:Xstardual} 
encodes relations \eqref{eq:idX} and \eqref{eq:idXstar}.% respectively.
\end{remark}

%Let us pick an object $X$ in $\cC $. As a matter of notation, we are
%going to write

%\begin{equation*}
%\db{\forget\left( X\right)}\left( 1_{\Bbbk }\right)
%=\sum_{t}\lambda ^{t}\otimes x^{t}\in \forget\left( X^{\star  }\right)
%\otimes \forget\left( X\right)
%\end{equation*}
%and also $\gamma \left( x\right) :=\ev{\forget(X)}\left( x\otimes \gamma \right)$ for all $x\in \forget\left(
%X\right) $ and $\gamma \in \forget\left( X^{\star  }\right) $.
%%Since $\forget\left( X^{\star  }\right) $ is dual of $\forget\left( X\right) $ in ${^{H} \M } $, 
%With these conventions, we may explicitly write \eqref{eq:Xdual} and \eqref{eq:Xstardual} as follows:
%\begin{equation*}
%y  =\sum_{t}\omega \left( y_{-1}\otimes \lambda _{-1}^{t}\otimes
%x_{-1}^{t}\right) \lambda _{0}^{t}\left( y_{0}\right) x_{0}^{t}, \quad
%\gamma =\sum_{t}\omega ^{-1}\left( \lambda _{-1}^{t}\otimes
%x_{-1}^{t}\otimes \gamma _{-1}\right) \gamma _{0}\left( x_{0}^{t}\right)
%\lambda _{0}^{t},  \notag
%\end{equation*}
%for all $y\in \forget\left( X\right) $ and for all $\gamma \in \forget\left( X^{\star  }\right) $.
%Indeed, its enough to track elements $y\in \forget\left( X\right) $ and $\gamma \in \forget\left( X^{\star  }\right) $ through the compositions in \eqref{eq:idX} and \eqref{eq:idXstar} respectively, keeping in mind  relation \eqref{eq:Hreassociator}.

\subsection{The natural transformation \texorpdfstring{$\nabla$}{}}

%Let $\cC$ be an essentially small right rigid monoidal category endowed with a quasi-monoidal functor $\left(\forget,\varphi,\varphi_0\right)$, $\forget:\cC\rightarrow\M_f$. We have a 
Consider the distinguished natural transformation $\nabla^{\forget}:\Nat(\forget,-\otimes \forget) \rightarrow \Nat(\forget,-\otimes\forget)$
given by
\begin{equation}\label{eq:Nabla}
\nabla^{\forget}_V(\xi)_X=(V\otimes \ev{\forget\left( X\right)}\otimes \forget(X))\, \tau_{{\forget(X),V}}\,  \xi_{{X^\star}} \, (\forget(X)\otimes \db{\forget(X)})
\end{equation}
for all $V$ in $\M$, $\xi\in \Nat(\forget,V\otimes \forget)$ and $X$ in $\cC$ (when it would be clear from the context where to apply a morphism, we will omit to tensor by the identity maps). Graphically,
\begin{equation}
\nabla^{\forget}_{V}(\xi)_X =
\gbeg{4}{6}
\got{1}{\micro{X}}\gvac{3}\gnl
\gcl{1}\gvac{1}\gdb\gnl
\gcl{1}\gsbox{2}\gnot{\hspace{5mm}\xi_{X^\star}}\gvac{2}\gcl{1}\gnl
\gbr\gcl{1}\gcl{1}\gnl
\gcl{1}\gev\gcl{1}\gnl
\got{1}{\micro{V}}\gvac{2}\got{1}{\micro{X}}
\gend.
\end{equation}

\begin{proposition}\label{rem:nabla}
Let $\cC$ and $\cD$ be essentially small right rigid monoidal categories. Let $(\cV,\psi,\psi_0)$, $\cV:\cD\rightarrow\M_f$, be a quasi-monoidal functor and let $(\cG,\zeta,\zeta_0)$, $\cG:\cC\rightarrow\cD$, be a monoidal one. For all $V\in\M$ and $\xi\in\Nat (\cV,V\otimes \cV)$ we have
\begin{equation}\label{eq:Nablacomp}
\nabla^{ {\cV}}_{ {V}}(\xi)\cG=\nabla^{ {\cV\cG}}_{ {V}}(\xi\cG).
\end{equation}
\end{proposition}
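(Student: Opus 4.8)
The plan is to verify \eqref{eq:Nablacomp} componentwise. Since $\bigl(\nabla^{\cV}_{V}(\xi)\cG\bigr)_{X}=\nabla^{\cV}_{V}(\xi)_{\cG(X)}$ and $(\xi\cG)_{X^{\star}}=\xi_{\cG(X^{\star})}$, it is enough to show $\nabla^{\cV}_{V}(\xi)_{\cG(X)}=\nabla^{\cV\cG}_{V}(\xi\cG)_{X}$ for every object $X$ of $\cC$. First I would make explicit which right dual of $\cG(X)$ in $\cD$ underlies each side. The left-hand side is built, via \eqref{eq:evaluation} and \eqref{eq:Nabla}, from the chosen dual $\cG(X)^{\star}$ of $\cG(X)$ in $\cD$ together with its evaluation and dual basis $\ev{\cG(X)}$, $\db{\cG(X)}$. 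For the right-hand side, the composite $\cV\cG$ is quasi-monoidal with structure morphisms $\cV(\zeta_{-,-})\,\psi_{\cG(-),\cG(-)}$ and unit constraint $\cV(\zeta_{0})\,\psi_{0}$; inserting these into \eqref{eq:evaluation} and using the functoriality of $\cV$ shows that the maps $\ev{\cV\cG(X)}$ and $\db{\cV\cG(X)}$ occurring in $\nabla^{\cV\cG}_{V}(\xi\cG)_{X}$ are those attached to the object $\cG(X^{\star})$ of $\cD$ equipped with $\zeta_{0}^{-1}\cG(\ev{X})\zeta_{X,X^{\star}}$ and $\zeta_{X^{\star},X}^{-1}\cG(\db{X})\zeta_{0}$; by Remark \ref{rem:rigidity} these are precisely the structure maps making $\cG(X^{\star})$ a right dual of $\cG(X)$ in $\cD$ (this is the only point where the monoidality of $\cG$ enters).

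By the uniqueness of right duals (cf.\ \eqref{eq:kappa}) applied to the single object $\cG(X)$ of $\cD$ carrying these two dual structures, there is an isomorphism $\gamma\colon\cG(X^{\star})\to\cG(X)^{\star}$ in $\cD$ satisfying
\begin{gather*}
\ev{\cG(X)}\bigl(\cG(X)\boxtimes\gamma\bigr)=\zeta_{0}^{-1}\cG(\ev{X})\zeta_{X,X^{\star}},\\
\bigl(\gamma\boxtimes\cG(X)\bigr)\zeta_{X^{\star},X}^{-1}\cG(\db{X})\zeta_{0}=\db{\cG(X)}.
\end{gather*}
Combining these with the description of $\ev{\cV\cG(X)}$ and $\db{\cV\cG(X)}$ from the previous paragraph and pushing $\cV(\gamma)$ through $\psi$ by the naturality of $\psi$ in each variable, I obtain
\begin{gather*}
\ev{\cV\cG(X)}=\ev{\cV(\cG(X))}\circ\bigl(\cV\cG(X)\otimes\cV(\gamma)\bigr),\\
\db{\cV\cG(X)}=\bigl(\cV(\gamma)^{-1}\otimes\cV\cG(X)\bigr)\circ\db{\cV(\cG(X))},
\end{gather*}
where $\ev{\cV(\cG(X))}$ and $\db{\cV(\cG(X))}$ now denote the maps \eqref{eq:evaluation} for the functor $\cV$ and the chosen dual $\cG(X)^{\star}$, that is, exactly the maps entering $\nabla^{\cV}_{V}(\xi)_{\cG(X)}$.

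It then remains to substitute these two identities, together with $(\xi\cG)_{X^{\star}}=\xi_{\cG(X^{\star})}$, into the defining formula \eqref{eq:Nabla} for $\nabla^{\cV\cG}_{V}(\xi\cG)_{X}$. The factor $\cV(\gamma)^{-1}$ coming from $\db{\cV\cG(X)}$ sits between $\db{\cV(\cG(X))}$ and $\xi_{\cG(X^{\star})}$ in the associated braided diagram; by the naturality of $\xi$ at the morphism $\gamma^{-1}\colon\cG(X)^{\star}\to\cG(X^{\star})$ it passes to the other side of $\xi$, at the cost of turning $\xi_{\cG(X^{\star})}$ into $\xi_{\cG(X)^{\star}}$. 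After this move, $\cV(\gamma)^{-1}$ and the factor $\cV(\gamma)$ produced by $\ev{\cV\cG(X)}$ act on the same strand and are separated only by $\tau_{\cV\cG(X),V}$, which involves the other two strands; by the interchange law they can be brought together and cancel, since $\cV(\gamma)\circ\cV(\gamma)^{-1}=\mathrm{id}$. What remains is exactly the formula \eqref{eq:Nabla} defining $\nabla^{\cV}_{V}(\xi)_{\cG(X)}$, which proves the claim.

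The step I expect to be the real obstacle is the identification in the first paragraph: recognizing that the right dual of $\cG(X)$ produced by the composite quasi-monoidal functor $\cV\cG$ through \eqref{eq:evaluation} is $\cG(X^{\star})$ with the dual structure induced by $\cG$ as in Remark \ref{rem:rigidity}, and keeping precise track of where the comparison isomorphism $\gamma$ ends up once it has been absorbed into the constraints $\psi$ and $\psi_{0}$. Once $\gamma$ and $\gamma^{-1}$ are correctly placed, the final cancellation is a routine use of the naturality of $\xi$ and of the interchange law; in fact this last step amounts to the observation that the natural transformation $\nabla^{\cV}_{V}(\xi)$ is insensitive to the chosen right duals, which could be isolated as a separate lemma.
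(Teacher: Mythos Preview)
Your proof is correct and follows essentially the same route as the paper: the paper introduces the canonical isomorphism $\kappa_X\colon\cG(X^\star)\to\cG(X)^\vee$ (your $\gamma$), derives the compatibility relations between the two sets of evaluation and dual-basis maps (your two displayed identities, which are the paper's equation \eqref{eq:dbcomp}), and then performs the same substitution-plus-naturality computation you describe, only running from $\nabla^{\cV}_V(\xi)_{\cG(X)}$ towards $\nabla^{\cV\cG}_V(\xi\cG)_X$ rather than the other way around. Your closing observation that the argument isolates the independence of $\nabla$ from the choice of duals is exactly what the paper records immediately afterwards as Corollary~\ref{lemma:Sunique}.
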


\begin{proof}
Assume that we are given a choice of right duals $(-)^\star$ in $\cC$ and $(-)^\vee$ in $\cD$. Since $\cG$ is monoidal we have a natural isomorphism $\kappa_{ {X}}:\cG(X^\star)\rightarrow\cG(X)^\vee$ as in \eqref{eq:kappa}. Note that the composition $\cV\cG$ is still a quasi-monoidal functor with structure isomorphisms $\phi=(\cV\zeta)\circ\psi (\cG\times \cG)$ and $\phi_0=\cV(\zeta_0) \psi_0$. We will need the following relations, which descend from \eqref{eq:compkappa},
\begin{equation}\label{eq:dbcomp} 
\left(\cV\kappa\otimes \cV\cG\right)\circ\db{}(\cV\cG)=(\db{} \cV) \cG \quad \text{ and } \quad \ev{} (\cV\cG)=\left(\ev{} \cV\right) \cG\circ\left(\cV\cG\otimes \cV\kappa\right).
\end{equation}
That is, for every object $X$ in $\cC$ we have
\begin{gather*}
\left(\cV\left(\kappa_{ {X}}\right)\otimes \cV\cG(X)\right) \, \db{\cV\cG(X)}=\db{\cV(\cG(X))},\qquad \ev{\cV\cG(X)} =\ev{\cV(\cG(X))}\, \left(\cV\cG(X)\otimes \cV\left(\kappa_{ {X}}\right)\right). 
\end{gather*}
As a consequence, for every $\xi\in\Nat(\cV,V\otimes \cV)$ we can compute directly
\begin{align*}
& \nabla^{ {\cV}}_{ {V}}\left(\xi\right)_{ {\cG\left(X\right)}} \stackrel{\eqref{eq:Nabla}}{=} \left(V\otimes \ev{\cV\left(\cG\left(X\right)\right)}\otimes \cV\cG\left(X\right)\right) \, \tau_{ {\cV\cG\left(X\right),V}} \, \xi_{ {\cG\left(X\right)^\vee}} \, \left(\cV\cG\left(X\right)\otimes \db{\cV\left(\cG\left(X\right)\right)}\right) \\
 &\stackrel{\eqref{eq:dbcomp}}{=} \left(V\otimes \ev{\cV\left(\cG\left(X\right)\right)}\otimes \cV\cG\left(X\right)\right) \, \tau_{ {\cV\cG\left(X\right),V}}  \, \xi_{ {\cG\left(X\right)^\vee}}\, \cV\left(\kappa_{ {X}}\right) \, \left(\cV\cG\left(X\right)\otimes \db{\cV\cG\left(X\right)}\right) \\
 & \hspace{2pt}\stackrel{(*)}{=}\hspace{2pt} \left(V\otimes \ev{\cV\left(\cG\left(X\right)\right)}\otimes \cV\cG\left(X\right)\right) \, \tau_{ {\cV\cG\left(X\right),V}}  \, \cV\left(\kappa_{ {X}}\right) \, \xi_{ {\cG\left(X^\star\right)}}\, \left(\cV\cG\left(X\right)\otimes \db{\cV\cG\left(X\right)}\right) \\
 & \stackrel{\eqref{eq:dbcomp}}{=}\left(V\otimes \ev{\cV\cG\left(X\right)}\otimes\cV\cG\left(X\right)\right)\, \tau_{ {\cV\cG\left(X\right),V}} \, \xi_{ {\cG\left(X^\star\right)}} \,  \left(\cV\cG\left(X\right)\otimes \db{\cV\cG\left(X\right)}\right) \stackrel{\eqref{eq:Nabla}}{=} \nabla^{ {\cV\cG}}_{ {V}}\left(\xi \cG\right)_{ {X}}
\end{align*}
where in $(*)$ we used the naturality of $\xi$.
\end{proof}

\begin{corollary}\label{lemma:Sunique}
Let $\cC$ be an essentially small right rigid monoidal category and let $\forget:\cC\rightarrow \M_f$ be a quasi-monoidal functor. The natural transformation $\nabla^{{\forget}}$ does not depend on the choice of the dual objects.
\end{corollary}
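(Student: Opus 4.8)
The plan is to deduce the statement at once from Proposition \ref{rem:nabla}, by specializing $\cD = \cC$ and $\cG = \id_{\cC}$. Fix two choices $(-)^{\star}$ and $(-)^{\vee}$ of right dual objects in $\cC$ and, temporarily, write $\nabla^{\forget,\star}$ and $\nabla^{\forget,\vee}$ for the natural transformations defined by \eqref{eq:Nabla} using these respective choices. I would invoke Proposition \ref{rem:nabla} with $\cV = \forget$ (carrying its given quasi-monoidal structure $(\varphi,\varphi_0)$), with the target category $\cD = \cC$ equipped with the choice of duals $(-)^{\vee}$, with the source category $\cC$ equipped with the choice of duals $(-)^{\star}$, and with $\cG = \id_{\cC}$ equipped with the identity (strict) monoidal structure $\zeta = \id$, $\zeta_0 = \id$; recall that the identity functor is strict monoidal, so this is a legitimate instance of the proposition.

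The next step is the routine bookkeeping that this instance collapses both sides to the objects of interest. With $\cG = \id_{\cC}$ and $\zeta,\zeta_0$ the identities, the composite $\cV\cG$ is literally $\forget$ again, with structure isomorphisms $\phi = (\forget\zeta)\circ\varphi(\id\times\id) = \varphi$ and $\phi_0 = \forget(\zeta_0)\,\varphi_0 = \varphi_0$; moreover, for any $\xi \in \Nat(\forget, V\otimes\forget)$ one has $\xi\cG = \xi$ (since $\cG$ is the identity on objects), and likewise $\nabla^{\forget,\vee}_V(\xi)\cG = \nabla^{\forget,\vee}_V(\xi)$. As in the proof of Proposition \ref{rem:nabla}, the superscript $\cV$ there refers to the $\nabla$ computed with the duals of the target category $\cD$ (here $(-)^{\vee}$) while the superscript $\cV\cG$ refers to the $\nabla$ computed with the duals of the source category $\cC$ (here $(-)^{\star}$). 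Hence Proposition \ref{rem:nabla} reads, for all $V$ in $\M$ and $\xi \in \Nat(\forget, V\otimes\forget)$,
\[
\nabla^{\forget,\vee}_V(\xi) \;=\; \nabla^{\forget,\vee}_V(\xi)\,\cG \;=\; \nabla^{(\forget\cG),\star}_V(\xi\,\cG) \;=\; \nabla^{\forget,\star}_V(\xi).
\]
Since $V$ and $\xi$ are arbitrary, $\nabla^{\forget,\vee} = \nabla^{\forget,\star}$, which is precisely the claim that $\nabla^{\forget}$ is independent of the choice of dual objects. There is no genuine obstacle here: all the content sits in Proposition \ref{rem:nabla}, and the only thing left to check is the trivial verification that $\id_{\cC}$ is a strict monoidal functor and that substituting it does not alter the quasi-monoidal data of $\forget$.
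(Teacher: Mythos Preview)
Your proof is correct and follows exactly the approach of the paper, which simply states that it suffices to take $\cD=\cC$ and $\cG=\id_{\cC}$ in (the proof of) Proposition~\ref{rem:nabla}. You have merely spelled out the bookkeeping that the paper leaves implicit.
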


\begin{proof}
It is enough to take $\cD=\cC$ and $\cG=\id_{\cC}$ in the proof of Proposition \ref{rem:nabla}.
\end{proof}

\begin{remark}
Mimiking \cite{Sch-Tannaka} we may consider a category $\mathfrak{C}$ whose objects are pairs $(\cC,\cU)$ where $\cC$ is an essentially small right rigid monoidal category and $\cU:\cC\to\M_f$ is a quasi-monoidal functor. Morphisms in $\mathfrak{C}$ between two objects $(\cC,\cU)$ and $(\cD,\cV)$ are given by monoidal functors $\cG:\cC\rightarrow \cD$ such that $\cV\cG=\cU$ as quasi-monoidal functors. It follows from Proposition \ref{rem:nabla} that the transformation $\nabla^{\sim}$ introduced in the foregoing is a natural transformation between the functor $\Nat(\sim,-~\otimes \sim):\mathfrak{C}\rightarrow \mathrm{Funct}(\M,\Set)$ sending $(\cC,\cU)$ to $\Nat(\cU,-\otimes \cU)$ and itself.
\end{remark}

\subsection{Rigidity and the preantipode}
By Yoneda Lemma and the fact that $H$ represents the functor $\Nat(\forget,-\otimes \forget)$, there exists a unique natural transformation in $\Nat(\forget,H\otimes \forget)$ which corresponds to $\nabla^{\forget}$ and it is $\nabla^{\forget}_H(\frho)$. Its component at $X$ is
\begin{equation}\label{eq:nabla}
\nabla^{\forget}_{H}(\frho)_X=(H\otimes \ev{\forget\left( X\right)}\otimes \forget(X))\, \tau_{{\forget(X),H}}\,  \frho_{{X^\star}} \, (\forget(X)\otimes \db{\forget(X)}).
\end{equation}
Moreover, there exists a unique linear endomorphism $S$ of $H$ such that
\begin{equation}\label{eq:preantnabla}
\vartheta_{{H}}(S)_X =
\gbeg{2}{4}
\gvac{1}\got{1}{\micro{X}}\gnl
\glcm\gnl
\gmp{S}\gcl{1}\gnl
\got{1}{\micro{H}}\got{1}{\micro{X}}
\gend
=
\gbeg{4}{6}
\got{1}{\micro{X}}\gvac{3}\gnl
\gcl{1}\gvac{1}\gdb\gnl
\gcl{1}\glcm\gcl{1}\gnl
\gbr\gcl{1}\gcl{1}\gnl
\gcl{1}\gev\gcl{1}\gnl
\got{1}{\micro{H}}\gvac{2}\got{1}{\micro{X}}
\gend
=
 \nabla^{\forget}_{H}(\frho)_X.
\end{equation}
Notice that, by naturality of $\vartheta$ and $\nabla^{\forget}$, for all $g:H\rightarrow V$ in $\M$ we have
\begin{equation}
\vartheta_{{V}}(gS) = \nabla^{\forget}_{V}((g\otimes \forget)\,\frho). \label{eq:thetanabla}
\end{equation}

\begin{proposition}\label{lemma:pre1}
The morphism $S$ is a preantipode for $H$.
\end{proposition}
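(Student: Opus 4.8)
The plan is to verify separately the three defining relations of a preantipode in Definition~\ref{def:preantipode}: the first two are equalities of $\K$-linear maps $H\to H\otimes H$, and the third, equation~\eqref{fond S}, is an equality of maps $H\to\K$. The engine throughout will be the representing property: by the Yoneda lemma applied to \eqref{eq:repr}, two parallel $\K$-linear maps $H^{\otimes n}\to V$ coincide precisely when the natural transformations obtained by applying $\vartheta^{n}_{V}$ agree, and this may be tested componentwise at each tuple of objects of $\cC$. Accordingly, one rewrites every structure map appearing in the relations through $\vartheta$, using the identities recorded in \S\ref{ssec:classical}: $\vartheta_{H\otimes H}(\Delta)=(H\otimes\frho)\frho$, $\vartheta_{\Bbbk}(\varepsilon)=\mathrm{id}_{\forget}$, the multiplication rule $(H\otimes\varphi_{X,Y})\,\vartheta^{2}_{H}(m)_{X,Y}=\frho_{X\boxtimes Y}\,\varphi_{X,Y}$, the relation \eqref{eq:defu} for $u$ and \eqref{eq:associativity} for $\omega$, together with the defining relation \eqref{eq:preantnabla} for $S$ and its consequence \eqref{eq:thetanabla}. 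Each preantipode axiom then reduces to a diagrammatic identity in $\M$, to be discharged with the graphical relations \eqref{eq:Xdual}--\eqref{eq:dbcolin} of Remark~\ref{rem:rigidity}, the dinaturality of $\ev{\forget(-)}$ and $\db{\forget(-)}$ from Lemma~\ref{Lemma:dinaturality}, and the coassociativity/counitality of $\frho$. By Corollary~\ref{lemma:Sunique} neither $\nabla^{\forget}$ nor $S$ depends on the chosen duals, so we may freely use the fixed choice $(-)^{\star}$.

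For the first relation $\sum S(h_{1})_{1}h_{2}\otimes S(h_{1})_{2}=1_{H}\otimes S(h)$, I would apply $\vartheta^{1}_{H\otimes H}$. The right-hand side turns into $x\mapsto 1_{H}\otimes\vartheta_{H}(S)_{X}(x)$, which is $1_{H}\otimes\nabla^{\forget}_{H}(\frho)_{X}$ by \eqref{eq:preantnabla}. For the left-hand side, use \eqref{eq:thetanabla} with $g=\Delta$ and coassociativity of $\frho$ to express $\vartheta_{H\otimes H}(\Delta S)$ as $\nabla^{\forget}_{H\otimes H}\big((H\otimes\frho)\frho\big)$, and the multiplication rule to rewrite the product $S(h_{1})_{1}h_{2}$ as a coaction on a tensor product; after reorganising by naturality of $\frho$, the left-hand natural transformation at $X$ becomes the diagram which coevaluates via $\db{\forget(X)}$, iterates the coaction on $\forget(X^{\star})$ and on the created copy of $\forget(X)$, braids, and fuses the two resulting $H$-strands. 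The first relation in \eqref{eq:dbcolin} --- which says exactly that $\db{\forget(X)}$ is a morphism of $H$-comodules --- makes those strands collapse to $1_{H}$, leaving $1_{H}\otimes\nabla^{\forget}_{H}(\frho)_{X}$, as required. The second relation $\sum S(h_{2})_{1}\otimes h_{1}S(h_{2})_{2}=S(h)\otimes 1_{H}$ is handled in the mirror-symmetric fashion, the role of the colinearity of $\db{\forget(X)}$ now being played by that of $\ev{\forget(X)}$, i.e.\ by the second relation in \eqref{eq:dbcolin}.

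For the last relation \eqref{fond S}, I would apply $\vartheta^{1}_{\Bbbk}$; the right-hand side becomes $\vartheta_{\Bbbk}(\varepsilon)=\mathrm{id}_{\forget}$ by counitality of $\frho$. On the left-hand side, expand $\vartheta^{3}_{\Bbbk}(\omega)$ through \eqref{eq:associativity}, insert the defining diagram \eqref{eq:preantnabla} for $S$ in the middle slot, and replace the iterated comultiplications by iterated coactions on $\forget(X)$ using coassociativity of $\frho$. After absorbing the coherence isomorphisms $\varphi$ and running the naturality squares of $\frho$, the resulting diagram is precisely the left-hand side of the transported zig-zag relation \eqref{eq:Xdual} --- the one encoding the triangle identity \eqref{eq:idX} inside ${}^{H}\M$ --- and hence equals $\mathrm{id}_{\forget(X)}$. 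Since the two natural endomorphisms of $\forget$ then agree at every $X$, relation \eqref{fond S} follows.

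I expect the genuine difficulty to lie in this last step and, to a lesser degree, in the bookkeeping around the multiplication in the first two relations: the reassociator $\omega$ enters \eqref{eq:associativity} entangled with the isomorphisms $\varphi_{X,Y}$ and with $\forget(\calpha^{-1}_{X,Y,Z})$, and disentangling these so that the diagram matches exactly the shape of the right-rigidity relation \eqref{eq:Xdual} will require a careful sequence of naturality moves for $\frho$, braiding manipulations and uses of the bijectivity of the $\varphi$'s (and, plausibly, of the cocycle condition \eqref{eq:3-cocycle}). It is worth stressing that the argument nowhere uses invertibility or $\K$-flatness of $H$, nor any comparison between $\forget(X^{\star})$ and $\forget(X)^{\ast}$: everything is extracted formally from the representing property of $H^{\otimes n}$, so the conclusion holds over an arbitrary commutative ring $\K$, and, by Remark~\ref{Lempreant}, $S$ is then an ordinary antipode exactly when $\varepsilon S=\varepsilon$.
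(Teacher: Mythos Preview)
Your proposal is correct and follows essentially the same route as the paper: reduce each preantipode axiom via the representing isomorphism $\vartheta$ to a diagrammatic identity in $\M$, then discharge the first two using the $H$-colinearity of $\db{\forget(X)}$ and $\ev{\forget(X)}$ respectively (relations \eqref{eq:dbcolin}), and the third using the transported zig-zag identity \eqref{eq:Xdual}. The paper carries this out with braided diagrams rather than prose, but the logical skeleton is identical; your anticipated difficulty with disentangling $\omega$ from the $\varphi$'s in the third step is in fact already absorbed into the statement of \eqref{eq:Xdual} in Remark~\ref{rem:rigidity}, so neither the cocycle condition \eqref{eq:3-cocycle} nor any further coherence juggling is needed.
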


\begin{proof}
Since $\db{\forget(X)}$ is $H$-colinear, it follows that
\begin{equation*}
\gbeg{4}{10}
\gvac{3}\got{1}{\micro{X}}\gnl
\gvac{2}\glcm\gnl
\gvac{1}\gcn{2}{1}{3}{2}\gcl{1}\gnl
\gvac{1}\gcmu\gcl{1}\gnl
\gvac{1}\gmp{S}\gcl{1}\gcl{1}\gnl
\gcn{2}{1}{3}{2}\gcl{1}\gcl{1}\gnl
\gcmu\gcl{1}\gcl{1}\gnl
\gcl{1}\gbr\gcl{1}\gnl
\gmu\gcl{1}\gcl{1}\gnl
\got{2}{\micro{H}}\got{1}{\micro{H}}\got{1}{\micro{X}}
\gend
\stackrel{\eqref{eq:preantnabla}}{=}
\gbeg{4}{10}
\got{1}{\micro{X}}\gvac{3}\gnl
\gcl{1}\gvac{1}\gdb\gnl
\gcl{1}\glcm\gcl{1}\gnl
\gbr\gcl{1}\gcl{1}\gnl
\gcl{1}\gev\gcl{1}\gnl
\gcn{1}{1}{1}{2}\gvac{1}\glcm\gnl
\gcmu\gcl{1}\gcl{1}\gnl
\gcl{1}\gbr\gcl{1}\gnl
\gmu\gcl{1}\gcl{1}\gnl
\got{2}{\micro{H}}\got{1}{\micro{H}}\got{1}{\micro{X}}
\gend
=
\gbeg{6}{11}
\got{1}{\micro{X}}\gvac{5}\gnl
\gcl{1}\gvac{1}\gwdb{4}\gnl
\gcl{1}\glcm\gvac{1}\glcm\gnl
\gbr\gcn{2}{1}{1}{3}\gcl{1}\gcl{1}\gnl
\gcl{1}\gcl{1}\glcm\gcl{1}\gcl{1}\gnl
\gcl{1}\gbr\gcl{1}\gcl{1}\gcl{1}\gnl
\gcl{1}\gcl{1}\gev\gcl{1}\gcl{1}\gnl
\gcl{1}\gcl{1}\gcn{3}{1}{5}{1}\gcl{1}\gnl
\gcl{1}\gbr\gvac{2}\gcl{1}\gnl
\gmu\gcl{1}\gvac{2}\gcl{1}\gnl
\got{2}{\micro{H}}\got{1}{\micro{H}}\gvac{2}\got{1}{\micro{X}}
\gend
= 
\gbeg{5}{10}
\got{1}{\micro{X}}\gvac{5}\gnl
\gcl{1}\gvac{1}\gwdb{3}\gnl
\gcl{1}\glcm\glcm\gnl
\gcl{1}\gcl{1}\gbr\gcl{1}\gnl
\gcl{1}\gmu\gcl{1}\gcl{1}\gnl
\gcl{1}\gcn{1}{1}{2}{1}\glcm\gcl{1}\gnl
\gbr\gcl{1}\gcl{1}\gcl{1}\gnl
\gcl{1}\gbr\gcl{1}\gcl{1}\gnl
\gcl{1}\gcl{1}\gev\gcl{1}\gnl
\got{1}{\micro{H}}\got{1}{\micro{H}}\gvac{2}\got{1}{\micro{X}}
\gend
\stackrel{\eqref{eq:dbcolin}}{=} 
\gbeg{5}{6}
\got{1}{\micro{X}}\gvac{5}\gnl
\gcl{1}\gu{1}\gvac{1}\gdb\gnl
\gbr\glcm\gcl{1}\gnl
\gcl{1}\gbr\gcl{1}\gcl{1}\gnl
\gcl{1}\gcl{1}\gev\gcl{1}\gnl
\got{1}{\micro{H}}\got{1}{\micro{H}}\gvac{2}\got{1}{\micro{X}}
\gend
\end{equation*}
i.e. for every $h\in H$ we have $\sum S\left( h_{1}\right)_{1}h_{2}\otimes S\left( h_{1}\right) _{2}=1_{H}\otimes S\left( h\right) $. Now, since $\ev{\forget(X)}$ is $H$-colinear as well, we have also
\begin{equation*}
\gbeg{4}{10}
\gvac{3}\got{1}{\micro{X}}\gnl
\gvac{2}\glcm\gnl
\gvac{1}\gcn{2}{1}{3}{1}\gcl{1}\gnl
\gwcm{3}\gcl{1}\gnl
\gcl{1}\gvac{1}\gmp{S}\gcl{1}\gnl
\gcl{1}\gcn{2}{1}{3}{2}\gcl{1}\gnl
\gcl{1}\gcmu\gcl{1}\gnl
\gbr\gcl{1}\gcl{1}\gnl
\gcl{1}\gmu\gcl{1}\gnl
\got{1}{\micro{H}}\got{2}{\micro{H}}\got{1}{\micro{X}}
\gend
\stackrel{\eqref{eq:preantnabla}}{=}
\gbeg{5}{9}
\gvac{1}\got{1}{\micro{X}}\gvac{3}\gnl
\glcm\gvac{1}\gdb\gnl
\gcl{1}\gcl{1}\glcm\gcl{1}\gnl
\gcl{1}\gbr\gcl{1}\gcl{1}\gnl
\gcl{1}\gcn{1}{1}{1}{2}\gev\gcl{1}\gnl
\gcl{1}\gcmu\gvac{1}\gcl{1}\gnl
\gbr\gcl{1}\gvac{1}\gcl{1}\gnl
\gcl{1}\gmu\gvac{1}\gcl{1}\gnl
\got{1}{\micro{H}}\got{2}{\micro{H}}\gvac{1}\got{1}{\micro{X}}
\gend
=
\gbeg{6}{9}
\gvac{1}\got{1}{\micro{X}}\gvac{3}\gnl
\gvac{1}\gcl{1}\gvac{2}\gdb\gnl
\glcm\gvac{1}\glcm\gcl{1}\gnl
\gcl{1}\gcl{1}\gcn{2}{1}{3}{1}\gcl{1}\gcl{1}\gnl
\gcl{1}\gbr\glcm\gcl{1}\gnl
\gcl{1}\gcl{1}\gbr\gcl{1}\gcl{1}\gnl
\gbr\gcl{1}\gev\gcl{1}\gnl
\gcl{1}\gmu\gvac{2}\gcl{1}\gnl
\got{1}{\micro{H}}\got{2}{\micro{H}}\gvac{2}\got{1}{\micro{X}}
\gend
=
\gbeg{6}{9}
\gvac{1}\got{1}{\micro{X}}\gvac{4}\gnl
\gvac{1}\gcl{1}\gvac{2}\gdb\gnl
\gvac{1}\gcl{1}\gvac{1}\glcm\gcl{1}\gnl
\gvac{1}\gcl{1}\gcn{2}{1}{3}{1}\gcl{1}\gcl{1}\gnl
\gvac{1}\gbr\gvac{1}\gcl{1}\gcl{1}\gnl
\gcn{1}{1}{3}{1}\glcm\glcm\gcl{1}\gnl
\gcl{1}\gcl{1}\gbr\gcl{1}\gcl{1}\gnl
\gcl{1}\gmu\gev\gcl{1}\gnl
\got{1}{\micro{H}}\got{2}{\micro{H}}\gvac{2}\got{1}{\micro{X}}
\gend
\stackrel{\eqref{eq:dbcolin}}{=}
\gbeg{4}{7}
\got{1}{\micro{X}}\gvac{2}\gnl
\gcl{1}\gvac{1}\gdb\gnl
\gcl{1}\glcm\gcl{1}\gnl
\gbr\gcl{1}\gcl{1}\gnl
\gcl{1}\gev\gcl{1}\gnl
\gcl{1}\gu{1}\gvac{1}\gcl{1}\gnl
\got{1}{\micro{H}}\got{1}{\micro{H}}\gvac{1}\got{1}{\micro{X}}
\gend
\end{equation*}
i.e. for every $h\in H$ we have $\sum S\left( h_{2}\right)_{1}\otimes h_{1}S\left( h_{2}\right) _{2}=S\left( h\right) \otimes 1_{H}$. Finally
\begin{equation*}
\gbeg{4}{9}
\gvac{3}\got{1}{\micro{X}}\gnl
\gvac{2}\glcm\gnl
\gcn{3}{1}{5}{3}\gcl{1}\gnl
\gwcm{3}\gcl{1}\gnl
\gcl{1}\gcn{2}{1}{3}{2}\gcl{1}\gnl
\gcl{1}\gcmu\gcl{1}\gnl
\gcl{1}\gmp{S}\gcl{1}\gcl{1}\gnl
\gsbox{3}\gnot{\hspace{9mm}\omega}\gvac{3}\gcl{1}\gnl
\gvac{3}\got{1}{\micro{X}}
\gend
=
\gbeg{4}{7}
\gvac{3}\got{1}{\micro{X}}\gnl
\gvac{2}\glcm\gnl
\gcn{2}{1}{5}{1}\glcm\gnl
\gcl{1}\gcn{1}{1}{3}{1}\glcm\gnl
\gcl{1}\gmp{S}\gcl{1}\gcl{1}\gnl
\gsbox{3}\gnot{\hspace{9mm}\omega}\gvac{3}\gcl{1}\gnl
\gvac{3}\got{1}{\micro{X}}
\gend
\stackrel{\eqref{eq:preantnabla}}{=}
\gbeg{6}{8}
\gvac{1}\got{1}{\micro{X}}\gvac{4}\gnl
\glcm\gvac{1}\gwdb{3}\gnl
\gcl{1}\gcl{1}\glcm\glcm\gnl
\gcl{1}\gbr\gcl{1}\gcl{1}\gcl{1}\gnl
\gcl{1}\gcl{1}\gev\gcl{1}\gcl{1}\gnl
\gcl{1}\gcl{1}\gcn{3}{1}{5}{1}\gcl{1}\gnl
\gsbox{3}\gnot{\hspace{9mm}\omega}\gvac{3}\gvac{2}\gcl{1}\gnl
\gvac{5}\got{1}{\micro{X}}
\gend
=
\gbeg{6}{7}
\gvac{1}\got{1}{\micro{X}}\gvac{4}\gnl
\gvac{1}\gcl{1}\gvac{1}\gwdb{3}\gnl
\glcm\glcm\glcm\gnl
\gcl{1}\gbr\gbr\gcl{1}\gnl
\gcl{1}\gcl{1}\gbr\gcl{1}\gcl{1}\gnl
\gsbox{3}\gnot{\hspace{9mm}\omega}\gvac{3}\gev\gcl{1}\gnl
\gvac{5}\got{1}{\micro{X}}
\gend
\stackrel{\eqref{eq:Xdual}}{=}
\gbeg{2}{4}
\gvac{1}\got{1}{\micro{X}}\gnl
\glcm\gnl
\gcu{1}\gcl{1}\gnl
\gvac{1}\got{1}{\micro{X}}
\gend
\end{equation*}
so that $\sum \omega \left(h_{1}\otimes S\left( h_{2}\right) \otimes h_{3}\right) = \varepsilon(h) $ for all $h\in H$.
\end{proof}

\noindent Summing up, we can state our main theorem, connecting the rigidity of the category $\cC $ with the existence of a preantipode for the coendomorphism coquasi-bialgebra.

\begin{theorem}\label{th:reconstructionpreantipode}
Let $\cC$ be an essentially small right rigid monoidal category together with a neutral quasi-monoidal functor $\forget:\cC\to\fvect $. Then there exists a preantipode $S$ for the coendomorphism coquasi-bialgebra $H$ of $\left(\cC,\forget\right)$.
\end{theorem}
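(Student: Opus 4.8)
The plan is to assemble the statement from the pieces already in place rather than to start from scratch. First I would invoke Theorem~\ref{th:majid} to obtain the coendomorphism coquasi-bialgebra $H=H_{\forget}$ together with its representing isomorphisms $\vartheta^{n}$ of \eqref{eq:repr} and the universal natural transformation $\frho=\vartheta_{H}(\id_{H})\colon\forget\to H\otimes\forget$. Since $\cC$ is right rigid, I would fix a choice $(-)^{\star}$ of right dual objects and transport the evaluation and dual-basis morphisms of $\cC$ along $\forget$ as in \eqref{eq:evaluation}; by Lemma~\ref{Lemma:dinaturality} the families $\ev{\forget(-)}$ and $\db{\forget(-)}$ so obtained are dinatural, so formula \eqref{eq:Nabla} defines an endo-natural transformation $\nabla^{\forget}$ of the functor $\Nat(\forget,-\otimes\forget)$, and by Corollary~\ref{lemma:Sunique} this $\nabla^{\forget}$ does not depend on the chosen duals.

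Next, because $H$ represents $\Nat(\forget,-\otimes\forget)$ through $\vartheta$, the Yoneda lemma supplies a unique $\Bbbk$-linear endomorphism $S$ of $H$ with $\vartheta_{H}(S)=\nabla^{\forget}_{H}(\frho)$, that is, with \eqref{eq:preantnabla} holding componentwise: $\vartheta_{H}(S)_{X}$ is produced from the regular piece $\frho_{X^{\star}}$ by ``bending in'' the $X^{\star}$-leg with $\db{\forget(X)}$ on the source side and pairing it off with $\ev{\forget(X)}$ on the target side, exactly as the antipode is extracted from the regular corepresentation in the classical Hopf case. Finally I would appeal to Proposition~\ref{lemma:pre1}, which checks that this $S$ satisfies the three identities of Definition~\ref{def:preantipode}; since $H$ is precisely the coendomorphism coquasi-bialgebra of $(\cC,\forget)$, this proves the theorem.

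The genuine content therefore lies almost entirely in the construction of $S$ and in the verification packaged as Proposition~\ref{lemma:pre1}, and that is where I expect the main obstacle. The decisive inputs there are the $H$-colinearity of the transported maps $\ev{\forget(X)}$ and $\db{\forget(X)}$---which holds because $\forget^{H}\colon\cC\to{^{H}\M}$ is monoidal, so that $(\forget(X^{\star}),\frho_{X^{\star}})$ is an honest right dual of $(\forget(X),\frho_{X})$ in ${^{H}\M}$ (Remark~\ref{rem:rigidity})---together with the diagrammatic identities \eqref{eq:Xdual}--\eqref{eq:dbcolin} encoding \eqref{eq:idX}--\eqref{eq:idXstar} and the colinearity of the structure maps. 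The delicate point is that $\forget(X^{\star})$ need not be the linear dual $\forget(X)^{*}$, so the whole computation must be carried out with the transported maps \eqref{eq:evaluation} and the graphical calculus rather than with actual $\Bbbk$-dual spaces, and the reassociator $\omega$ has to be threaded carefully through each bending and unbending; keeping track of it is exactly what makes the third preantipode identity $\sum\omega(h_{1}\otimes S(h_{2})\otimes h_{3})=\varepsilon(h)$ come out, while the first two fall out of the colinearity of $\db{\forget(X)}$ and $\ev{\forget(X)}$ respectively.
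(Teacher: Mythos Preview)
Your proposal is correct and follows exactly the paper's approach: the theorem is stated there as a summary (``Summing up, we can state our main theorem'') immediately after Proposition~\ref{lemma:pre1}, with the construction of $S$ via \eqref{eq:preantnabla} and the verification of the preantipode axioms already carried out in that proposition using precisely the inputs you name (Remark~\ref{rem:rigidity} and the identities \eqref{eq:Xdual}--\eqref{eq:dbcolin}). There is nothing to add.
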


\begin{corollary}[{\cite[page 255, Theorem]{Ul}}] If in addition $\forget:\cC \rightarrow { \M }_{f}$ is monoidal, then the coendomorphism coquasi-bialgebra $H$ is a Hopf algebra.
\end{corollary}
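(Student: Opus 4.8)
The plan is to combine Theorem~\ref{th:reconstructionpreantipode} with the observation that, as soon as $\forget$ is monoidal, the reassociator $\omega$ of $H$ becomes trivial, so that $H$ is an ordinary bialgebra and its preantipode is automatically an ordinary antipode.

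First I would prove that $\omega=\varepsilon\otimes\varepsilon\otimes\varepsilon$. Since the associativity constraint of $\M$ is the identity, condition \eqref{eq:assfunctor} for $\forget$ reads $\forget(\calpha_{X,Y,Z})\,\varphi_{X\boxtimes Y,Z}\,(\varphi_{X,Y}\otimes\forget(Z))=\varphi_{X,Y\boxtimes Z}\,(\forget(X)\otimes\varphi_{Y,Z})$; rearranging this and substituting it into the right-hand side of the defining relation \eqref{eq:associativity} for $\omega$, one can cancel the isomorphism $\varphi_{X\boxtimes Y,Z}(\varphi_{X,Y}\otimes\forget(Z))$ on the left and conclude that the natural transformation $\vartheta^{3}_{\Bbbk}(\omega)$ equals the identity of $\forget^{3}$ (under the canonical identification $\Bbbk\otimes\forget^{3}\cong\forget^{3}$). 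A short unwinding of the explicit formula for $\vartheta^{3}$ shows that $\vartheta^{3}_{\Bbbk}(\varepsilon\otimes\varepsilon\otimes\varepsilon)$ is also the identity of $\forget^{3}$: it acts on $x\otimes y\otimes z$ by $\sum\varepsilon(x_{-1})\varepsilon(y_{-1})\varepsilon(z_{-1})\,x_{0}\otimes y_{0}\otimes z_{0}$, which reduces to the identity by the counit axioms for $\frho_{X},\frho_{Y},\frho_{Z}$. Since $\vartheta^{3}$ is a natural isomorphism by \eqref{eq:repr}, injectivity of $\vartheta^{3}_{\Bbbk}$ gives $\omega=\varepsilon\otimes\varepsilon\otimes\varepsilon$.

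With $\omega$ trivial, every relation of Proposition~\ref{prop:coquasi} involving $\omega$ becomes an identity, and the relation $m(H\otimes m)\ast\omega=\omega\ast m(m\otimes H)$ degenerates to the associativity $m(H\otimes m)=m(m\otimes H)$; together with \eqref{eq:quasi-unitairity cocycle2} and the fact that $m$ and $u$ are coalgebra morphisms, this shows that $(H,m,u,\Delta,\varepsilon)$ is an ordinary bialgebra. Finally, let $S$ be the preantipode of $H$ produced by Theorem~\ref{th:reconstructionpreantipode}. Evaluating the third preantipode axiom \eqref{fond S} with $\omega=\varepsilon\otimes\varepsilon\otimes\varepsilon$ gives $\varepsilon S(h)=\sum\varepsilon(h_{1})\varepsilon(S(h_{2}))\varepsilon(h_{3})=\varepsilon(h)$ for all $h\in H$, so Remark~\ref{Lempreant} yields $\sum h_{1}S(h_{2})=\varepsilon(h)1_{H}=\sum S(h_{1})h_{2}$; that is, $S$ is an ordinary antipode and $H$ is a Hopf algebra. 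I expect the only genuinely non-formal point to be the identification $\vartheta^{3}_{\Bbbk}(\omega)=\mathrm{id}_{\forget^{3}}$ together with the analogous computation for $\varepsilon\otimes\varepsilon\otimes\varepsilon$; once the reassociator is known to be trivial, the conclusion is an immediate consequence of Theorem~\ref{th:reconstructionpreantipode} and Remark~\ref{Lempreant}.
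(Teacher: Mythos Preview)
Your proof is correct and follows exactly the paper's approach: show that monoidality of $\forget$ forces $\omega=\varepsilon\otimes\varepsilon\otimes\varepsilon$, so $H$ is an ordinary bialgebra, and then the preantipode $S$ from Theorem~\ref{th:reconstructionpreantipode} satisfies $\varepsilon S(h)=\sum\omega(h_1\otimes S(h_2)\otimes h_3)=\varepsilon(h)$, whence $S$ is an antipode by Remark~\ref{Lempreant}. The only difference is that the paper states the triviality of $\omega$ without proof, whereas you spell out the argument via \eqref{eq:associativity}, \eqref{eq:assfunctor}, and injectivity of $\vartheta^3_{\Bbbk}$.
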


\begin{proof}
Since $\omega =\varepsilon \otimes \varepsilon \otimes \varepsilon $, $H$ is a bialgebra and the preantipode provided by Theorem \ref{th:reconstructionpreantipode} satisfies
\begin{equation*}
\varepsilon S\left( h\right) =\sum \omega \left( h_{1}\otimes S\left(h_{2}\right) \otimes h_{3}\right) \stackrel{\eqref{fond S}}{=}\varepsilon \left( h\right) ,
\end{equation*} % it follows from $\omega=\varepsilon\otimes \varepsilon\otimes\varepsilon$!!
\ie it is an ordinary antipode (see Remark \ref{Lempreant}).
\end{proof}

\begin{remark}
Between the distinguished natural transformations in $\Nat(\forget,\forget)$ that one may consider, there is also $\left(\forget(X) \otimes \db{\forget(X)}\right)\left( \ev{\forget(X)} \otimes \forget(X) \right)$. This however does not endow $H$ with a new structure. Instead, it can be checked that
\begin{align*}
\left( \varepsilon S \otimes \forget(X) \right) \frho_{X} & = \left(\forget(X) \otimes \db{\forget(X)}\right)\left( \ev{\forget(X)} \otimes \forget(X) \right) \\
 & = \left(\left( \omega^{-1}(S\otimes H \otimes S)(\Delta \otimes H)\Delta \right)\otimes \forget(X)\right)\delta_X
\end{align*}
whence $\omega^{-1}(S(h_1)\otimes h_2\otimes S(h_3)) = \varepsilon S(h)$ for all $h\in H$ as in \cite[Lemma 2.14]{Ardi-Pava-Bosonization}.
\end{remark}

We conclude this subsection by showing that Theorem \ref{th:reconstructionpreantipode} can be refined in order to get a reconstruction theorem for coquasi-Hopf algebras as well (this result can be considered as a dual version of \cite[Lemma 4]{reconQuasiHopf}). Recall that a \emph{coquasi-Hopf algebra} is a coquasi-bialgebra $H$ endowed with a \emph{coquasi-antipode}, that is to say, a triple $(s,\alpha,\beta)$ composed by an anti-coalgebra endomorphism $s:H\rightarrow H$ and two linear maps $\alpha ,\beta \in H^{\ast }$, such that, for all $h\in H$
\begin{gather*}
\sum h_{1}\beta (h_{2})s(h_{3}) =\beta (h)1_{H}, \quad \sum s(h_{1})\alpha (h_{2})h_{3} =\alpha (h)1_{H}, \\
\sum \omega (h_{1}\otimes \beta (h_{2})s(h_{3})\alpha (h_{4})\otimes h_{5})
 = \varepsilon (h), \\
\sum \omega ^{-1}(s(h_{1})\otimes \alpha
(h_{2})h_{3}\beta (h_{4})\otimes s(h_{5}))= \varepsilon (h).
\end{gather*}

\begin{remark}[Reconstruction theorem for coquasi-Hopf algebras]\label{rem:reconcoquasi}
Let $(\cC,\forget)$ be as in Theorem \ref{th:reconstructionpreantipode} and assume in addition that we have a natural isomorphism $\nu_{X}:\forget\left(X^\star\right)\rightarrow \forget(X)^*$ in $\M_f$. Consider the associated coendomorphism coquasi-bialgebra $H$.  We may endow $\forget(X)^*$ with an $H$-comodule structure given by $\rho_{\forget(X)^*}:=\nu_X\circ \delta_{X^\star} \circ \nu_X^{-1}$. To simplify the exposition, we will denote it by $\frho_{X^*}$, even if this notation does not strictly make sense. With this coaction, $\forget(X)^*$ becomes a right dual object of $\forget(X)$ in ${^H\M_f}$ with evaluation and dual basis maps given by
\begin{equation*}
\ev{\forget(X)}^{(*)} = \ev{\forget(X)}  \left(\forget(X)\otimes \nu_X^{-1} \right) \qquad \text{and} \qquad \db{\forget(X)}^{(*)} = \left(\nu_X\otimes \forget(X)\right)  \db{\forget(X)}.
\end{equation*}
Denote by $\cV:{^H\M_f}\rightarrow \M_f$ the forgetful functor and by $\ev{V}^{(\K)}:V\otimes V^*\rightarrow \K$ and $\db{V}^{(\K)}:\K\rightarrow V^*\otimes V$ the ordinary evaluation and dual basis maps for finitely-generated and projective $\K$-modules. Then, there exist unique linear morphisms $\alpha,\beta\in H^*$ and $s:H\rightarrow H$ such that
\begin{gather*}
\left( \alpha \otimes \forget(X) \right) \delta_X = \left( \ev{\forget(X)}^{(\K)} \otimes \forget(X) \right) \left( \forget(X) \otimes \db{\forget(X)}^{(*)} \right), \\
\left( \beta \otimes \forget(X) \right) \delta_X = \left( \ev{\forget(X)}^{(*)} \otimes \forget(X) \right) \left( \forget(X) \otimes \db{\forget(X)}^{(\K)} \right), \\
\left( s \otimes \forget(X) \right) \delta_X = \left( H \otimes \ev{\forget(X)}^{(\K)} \otimes \forget(X) \right) \tau_{\forget(X),H}  \delta_{X^*} \left( \forget(X) \otimes \db{\forget(X)}^{(\K)} \right).
\end{gather*}
It can be checked that $(s,\alpha,\beta)$ is a coquasi-antipode for $H$. A posteriori, notice that the following relations hold
\begin{subequations}\label{eq:evdbcoquasiHopf}
\begin{align}
\tau_{\forget(X)^*,H} \left( \forget(X)^* \otimes H \otimes \ev{\forget(X)}^{(\K)} \right) s \, \delta_X ( \db{\forget(X)}^{(\K)} \otimes \forget & (X)^* ) = \delta_{X^*} \label{eq:rhostarcoquasiHopf}, \\
\left(  \forget(X)^* \otimes  \alpha \otimes \forget(X) \right) \left( \forget(X)^* \otimes \delta_X \right) \db{\forget(X)}^{(\K)} & = \db{\forget(X)}^{(*)} , \label{eq:dbcoquasihopf}\\
\ev{\forget(X)}^{(\K)} \left( \beta \otimes \forget(X) \otimes \forget(X)^* \right) \left( \delta_X \otimes \forget(X)^* \right) & = \ev{\forget(X)}^{(*)} \label{eq:evcoquasihopf}.
\end{align}
\end{subequations}
\end{remark}

\subsection{The field case}\label{ssec:field}
Under the additional assumption that $\K$ is a field, some previous results can be refined and some further conclusions can be drawn.
%For example, we can characterize coquasi-bialgebras with preantipode and coquasi-Hopf algebras via the rigidity of their categories of finite-dimensional or we may recover the uniqueness of the preantipode, the fact that any morphism of coquasi-bialgebras automatically preserves preantipodes and the fact that any coquasi-Hopf algebra admits a preantipode.
The key point, as we already mentioned in Remark \ref{rem:reconstructioncoquasi}, is that the reconstruction process applied to ${^C\fvect}$ for $C$ a coalgebra over a field gives back the starting coalgebra.

\begin{remark}\label{rem:compnabla}
Assume that $B$ is a coquasi-bialgebra with a preantipode $S_{ {B}}$. Denote by $\cU:{^{B}\M_f}\rightarrow\M_f$ the forgetful functor and by $\rho\in\Nat \left(\cU,B\otimes\cU\right)$ the natural coaction of the $B$-comodules in ${^{B}\M_f}$. It can be checked that for $V$ in ${^{B}\M_f}$, a right dual of $V$ is given by $V^{\star}=\left(V^*\otimes B\right)^{\mathrm{co}B}$ with coaction $\rho _{ {V^{\star  }}}\left( \sum_{t}f_{t}\otimes b_{t}\right)=\sum_{t}\left(b_{t}\right) _{1}\otimes \left( f_{t}\otimes \left(b_{t}\right) _{2}\right)$. Evaluation and dual basis maps are given by
\begin{gather*}
\ev{V} \hspace{-1pt} \left(u\otimes \sum_{t}( f_{t}\otimes b_{t})\right) \hspace{-1pt} =\sum_{t}f_{t}( u) \varepsilon( b_{t}), \qquad 
\db{V}(1_{\Bbbk})= \sum_{i=1}^{ d_{ {V}}}\left( v^{i} _{0}\otimes S_{ {B}}( v^{i}_{1})\right) \otimes v_{i},
\end{gather*}
for all $\sum_{t}f_{t}\otimes b_{t}\in V^{\star}$, $u\in V$, where $\sum_{i=1}^{ d_{ {V}}}v^i\otimes v_i\in V^*\otimes V$ is a dual basis for $V$ as a finite-dimensional vector space. In particular, ${^B}\M_f$ is right rigid.
\begin{invisible}
Notice that the reason why we need $\K$ to be a field in this case is that we want a comodule structure on $\left(V^*\otimes B\right)^{\mathrm{co}B}$.
\end{invisible}
\end{remark}

The following theorem characterizes coquasi-bialgebras with preantipode and coquasi-Hopf algebras via rigidity of the categories of finite-dimensional comodules.

\begin{theorem}\label{thm:characterization}
A coquasi-bialgebra $B$ admits a preantipode if and only if ${^B\fvect}$ is right rigid. It is a coquasi-Hopf algebra if and only if, in addition, the right duals can be given by endowing the $\K$-linear duals with a suitable comodule structure.
\end{theorem}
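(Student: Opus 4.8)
The plan is to derive all four implications, grouped as the two biconditionals, reducing everything but one step to results already established.

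For ``$B$ admits a preantipode $\Longleftrightarrow$ ${^B\fvect}$ is right rigid'': the forward implication is exactly Remark~\ref{rem:compnabla}, which, given a preantipode $S_B$, exhibits a right dual of any $V\in{^B\fvect}$ as $\left(V^*\otimes B\right)^{\mathrm{co}B}$ with the coaction, evaluation and dual basis displayed there. For the converse I would feed the pair $\cC:={^B\fvect}$, $\forget:=\cU$ (the forgetful functor, which is strict and neutral quasi-monoidal into $\fvect$ over a field) into Theorem~\ref{th:reconstructionpreantipode}: the category ${^B\fvect}$ is essentially small (a coassociative counital coaction on $\K^{n}$ ranges over a set) and right rigid by hypothesis, so the coendomorphism coquasi-bialgebra of $(\cC,\cU)$ admits a preantipode; by Remark~\ref{rem:reconstructioncoquasi} that coquasi-bialgebra is $B$ itself with its original structure maps, whence $B$ has a preantipode.

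For ``$B$ is a coquasi-Hopf algebra $\Longleftrightarrow$, in addition, the right duals can be realized on the $\K$-linear duals'': the backward implication is again a citation. If a right dual of every $X$ can be chosen with underlying $\K$-module $\cU(X)^{*}$ endowed with a suitable coaction, then in particular there is a natural isomorphism $\nu_X\colon\cU(X^\star)\to\cU(X)^{*}$ in $\M_f$, which is precisely the extra datum required in Remark~\ref{rem:reconcoquasi}; that remark then produces a coquasi-antipode $(s,\alpha,\beta)$ on the reconstructed coquasi-bialgebra, which by Remark~\ref{rem:reconstructioncoquasi} is $B$, so $B$ is coquasi-Hopf.

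The only step needing genuine work is the forward implication of the second biconditional. Starting from a coquasi-antipode $(s,\alpha,\beta)$ of $B$, for a finite-dimensional $B$-comodule $(V,\rho_V)$ with basis $\{v_j\}$, dual basis $\{v^j\}$ and $\rho_V(v_j)=\sum_k b_{kj}\otimes v_k$ ($b_{kj}\in B$), I would put on $V^{*}$ the coaction $\rho_{V^{*}}(v^j):=\sum_k s(b_{jk})\otimes v^k$ together with $\ev{V}^{(*)}(v\otimes f):=\sum\beta(v_{-1})\,f(v_0)$ and $\db{V}^{(*)}(1):=\sum_j\alpha\big((v_j)_{-1}\big)\,v^j\otimes (v_j)_0$ --- the maps forced by \eqref{eq:rhostarcoquasiHopf}, \eqref{eq:evcoquasihopf}, \eqref{eq:dbcoquasihopf} upon replacing $\ev{}^{(\K)},\db{}^{(\K)}$ by the ordinary $\K$-linear evaluation and dual basis, and well-defined because $\dim_{\K}V<\infty$ (this is where $\K$ being a field enters). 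Then $\rho_{V^{*}}$ is coassociative and counital since $s$ is an anti-coalgebra endomorphism of $B$; one checks that $\ev{V}^{(*)}$ and $\db{V}^{(*)}$ are $B$-colinear and that the zigzag identities \eqref{eq:idX}--\eqref{eq:idXstar} hold for the triple $\big(V^{*},\ev{V}^{(*)},\db{V}^{(*)}\big)$ in ${^B\fvect}$, so ${^B\fvect}$ is right rigid with the duals supported on the linear duals. This last verification, which uses all four coquasi-antipode identities together with the $3$-cocycle condition \eqref{eq:3-cocycle} and the quasi-unitality conditions on $\omega$, is the bookkeeping-heavy heart of the argument and the main obstacle; everything else is either cited or routine. (One could instead first note that a coquasi-Hopf algebra has a preantipode, deduce right rigidity from the first biconditional, and only identify the abstract dual of Remark~\ref{rem:compnabla} with $V^{*}$; but that identification is no shorter than the direct construction above.)
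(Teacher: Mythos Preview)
Your proposal is correct and mirrors the paper's proof almost exactly: the first biconditional is dispatched via Theorem~\ref{th:reconstructionpreantipode} and Remark~\ref{rem:compnabla}, and the backward half of the second via Remark~\ref{rem:reconcoquasi} (combined with Remark~\ref{rem:reconstructioncoquasi}). The only difference is that for the forward half of the second biconditional the paper simply cites \cite[page 334]{Schauenburg-HopExt} for the fact that ${^H\fvect}$ over a coquasi-Hopf algebra is right rigid with duals supported on $V^{*}$ and structure maps as in \eqref{eq:evdbcoquasiHopf}, whereas you propose to verify this directly; your outline of that verification is accurate.

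One small point worth tightening: in the backward half of the second biconditional you assert that having the duals supported on $\cU(X)^{*}$ yields ``in particular'' a \emph{natural} isomorphism $\nu_X\colon\cU(X^\star)\to\cU(X)^{*}$, which is what Remark~\ref{rem:reconcoquasi} needs. Taking $\nu_X=\id$ requires $\cU(f^\star)=(\cU f)^{*}$ for all $f$, and this is not literally part of the hypothesis as phrased in the theorem. The paper's proof is equally terse here; the cleanest fix is to read the hypothesis as including this naturality (which is how the paper evidently intends it), or to note that it follows from uniqueness of duals once one knows that $\ev{}^{(*)}$ and $\db{}^{(*)}$ are dinatural.
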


\begin{proof}
The first assertion follows from Theorem \ref{th:reconstructionpreantipode} together with Remark \ref{rem:compnabla}. For the second one, it is already known that the converse of Remark \ref{rem:reconcoquasi} holds true, in the sense that the category of finite-dimensional left $H$-comodules ${^{H} \M }_{f}$ over a coquasi-Hopf algebra $H$ is a right rigid monoidal category, where the dual of $\left( V,\rho_V \right) $ in ${^{H} \M }_{f}$ is given by its dual vector space $V^{\ast }$ with structure maps given as in \eqref{eq:evdbcoquasiHopf} (cf. \cite[page 334]{Schauenburg-HopExt}).
\begin{invisible}
Explicitly, the dual of $\left( V,\rho_V \right) $ in ${^{H} \M }_{f}$ is given by its dual vector space $V^{\ast }$ with comodule structure $\rho _{V^\ast }\left( f\right) =\sum f_{-1}\otimes f_{0}$ given by the relation
\begin{equation}
\sum f_{-1}f_{0}\left( v\right) =\sum s\left( v_{-1}\right) f\left(
v_{0}\right)  \label{eq:coactant}
\end{equation}
for all $v\in V$, $f\in V^{\ast }$. Moreover, if $\db{V}^{(\Bbbk) }\left( 1_{\Bbbk}\right) =\sum_{i=1}^{ d_{ {V}} }v^{i}\otimes v_{i}\in V^{\ast }\otimes V$ is a dual basis for $V$ as a vector space, then 
\begin{equation}
\rho _{V^\ast }\left( f\right) =\sum_{i=1}^{ d_{ {V}} 
}s\left( \left( v_{i}\right) _{-1}\right) f\left( \left( v_{i}\right)
_{0}\right) \otimes v^{i}
\end{equation} 
and the evaluation and dual basis morphisms are given by
\begin{equation}
\ev{V}\left( v\otimes f\right) =\sum \beta \left( v_{-1}\right) f\left( v_{0}\right) , \qquad \db{V}\left( 1_{\Bbbk }\right) =\sum_{i=1}^{ d_{ {V}}}v^{i}\otimes \alpha \left( \left( v_{i}\right) _{-1}\right) \left( v_{i}\right) _{0}
\end{equation}
for all $v\in V$, $f\in V^{\ast }$ (cf. \cite[page 334]{Schauenburg-HopExt}). 
\end{invisible}
\end{proof}

\begin{corollary}[{\cite[Theorem 3.10]{Ardi-Pava}}]\label{cor:quasi-preantipode}
Every coquasi-Hopf algebra $H$ with coquasi-antipode $\left(s,\alpha ,\beta \right) $ admits a preantipode given by $S:=\beta*s*\alpha$.
\end{corollary}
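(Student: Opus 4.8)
The plan is to obtain this from the reconstruction machinery of Section~\ref{sec:tannaka} rather than by a direct Sweedler computation (which, because of the reassociator, is rather involved). Since $\K$ is a field, the assumption that $H$ is a coquasi-Hopf algebra forces ${^H\fvect}$ to be a right rigid monoidal category, the right dual of $(V,\rho_V)$ being the $\K$-linear dual $V^*$ equipped with the comodule structure and the evaluation/dual-basis morphisms $\ev{V}^{(*)}$, $\db{V}^{(*)}$ described in Remark~\ref{rem:reconcoquasi} (this is the direction of Theorem~\ref{thm:characterization} taken from \cite{Schauenburg-HopExt}). Applying Theorem~\ref{th:reconstructionpreantipode} to $(\cC,\forget)=({^H\fvect},\cU)$ --- legitimate because over a field the forgetful functor $\cU$ is automatically a neutral quasi-monoidal functor into $\fvect$ --- produces a preantipode on the coendomorphism coquasi-bialgebra of $\cU$; by Remark~\ref{rem:reconstructioncoquasi} the latter is $H$ with its original structure maps. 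So $H$ carries a preantipode, namely the map $S$ given by \eqref{eq:preantnabla} (Proposition~\ref{lemma:pre1}).

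It then remains to identify this $S$ with $\beta*s*\alpha$. By Corollary~\ref{lemma:Sunique} the transformation $\nabla^{\forget}$, hence $S$, is independent of the choice of right duals, so we may use the ones above; reading \eqref{eq:preantnabla} for the forgetful functor (where $\frho_V=\rho_V$) it becomes
\[
(S\otimes V)\,\rho_V=(H\otimes \ev{V}^{(*)}\otimes V)\,\tau_{V,H}\,\frho_{V^*}\,(V\otimes \db{V}^{(*)})
\]
for every finite-dimensional $H$-comodule $V$. Now I substitute the three identities of \eqref{eq:evdbcoquasiHopf}: \eqref{eq:rhostarcoquasiHopf} rewrites $\frho_{V^*}$ through $s$ and the ordinary maps $\ev{V}^{(\K)}$, $\db{V}^{(\K)}$; \eqref{eq:evcoquasihopf} rewrites $\ev{V}^{(*)}$ as $\ev{V}^{(\K)}$ precomposed with $\beta$ and $\delta_V$; and \eqref{eq:dbcoquasihopf} rewrites $\db{V}^{(*)}$ as $\db{V}^{(\K)}$ postcomposed with $\alpha$ and $\delta_V$. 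Collapsing the two ordinary zig-zags --- the genuine triangle identities for $\ev{V}^{(\K)}$, $\db{V}^{(\K)}$ in $\fvect$ --- and using coassociativity of $\Delta$, the right-hand side turns into the expression defining $s$ in Remark~\ref{rem:reconcoquasi}, convolved on the left with $\beta$ and on the right with $\alpha$; that is, $(S\otimes V)\rho_V=((\beta*s*\alpha)\otimes V)\rho_V$. Taking $V=H$ with $\rho_H=\Delta$ and applying $H\otimes\varepsilon$ yields $S=\beta*s*\alpha$.

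The step I expect to be the real obstacle is this last substitution. One has to juggle several stacked comultiplications and verify, via coassociativity and the triangle identities, that precisely one copy of $s$ survives and that it is the one defining the coquasi-antipode in Remark~\ref{rem:reconcoquasi} --- a delicate point since the coaction $\frho_{V^*}$ occurring in \eqref{eq:preantnabla} is itself built out of $s$ through \eqref{eq:rhostarcoquasiHopf}, so one must make sure it is not counted twice. This bookkeeping is most transparent in the braided-diagram calculus used in the previous subsections. Once it is carried out the identification is immediate, and since over a field the preantipode of a coquasi-bialgebra is unique, $\beta*s*\alpha$ is \emph{the} preantipode of $H$.
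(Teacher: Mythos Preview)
Your approach is essentially the paper's own: invoke rigidity of ${^H\fvect}$ from the coquasi-Hopf structure, apply Theorem~\ref{th:reconstructionpreantipode}, and then unwind \eqref{eq:preantnabla} via the identities \eqref{eq:evdbcoquasiHopf}. The paper carries out exactly this substitution---first replacing $\ev{V}^{(*)}$ and $\db{V}^{(*)}$ by \eqref{eq:evcoquasihopf} and \eqref{eq:dbcoquasihopf}, then sliding $\alpha$ and $\beta$ to the outside, and finally recognising the remaining middle piece as $(s\otimes V)\rho_V$ via \eqref{eq:rhostarcoquasiHopf}---arriving at $(S\otimes V)\rho_V=((\beta*s*\alpha)\otimes V)\rho_V$ for every $V\in{^H\fvect}$. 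Your worry about ``counting $s$ twice'' does not arise: $\frho_{V^*}$ is left untouched until the very last step, where it is traded for $(s\otimes V)\rho_V$ in one move.

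There is, however, a small but genuine gap in your final line: you cannot ``take $V=H$ with $\rho_H=\Delta$'', because $H$ need not be finite-dimensional and hence is not an object of ${^H\fvect}$. The correct way to conclude---and the one the paper uses implicitly---is that the identity $(S\otimes V)\rho_V=((\beta*s*\alpha)\otimes V)\rho_V$ for all finite-dimensional $V$ says precisely $\vartheta_H(S)=\vartheta_H(\beta*s*\alpha)$ in $\Nat(\cU,H\otimes\cU)$, and since $\vartheta_H$ is the bijection \eqref{eq:repr} this forces $S=\beta*s*\alpha$. (Alternatively, over a field, sit each $h\in H$ inside a finite-dimensional subcoalgebra via the fundamental theorem of coalgebras and argue elementwise.)
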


\begin{proof}
We may apply Theorem \ref{th:reconstructionpreantipode} as well as Remark \ref{rem:reconcoquasi} with $\nu_V=\id_{V^*}$ for all $(V,\rho_V)$ in ${^H\M_f}$. The outcome is a coquasi-bialgebra with preantipode structure on $H$, where $\omega $ is the former one and $S $ is uniquely given by 
\begin{align*}
( S  &  \otimes V)\rho_V \stackrel{\eqref{eq:preantnabla}}{=} (H\otimes \ev{V}^{(*)} \otimes V)\, \tau_{{V,H}}\,  \frho_{V^*} \, (V\otimes \db{V}^{(*)}) \\
 & \stackrel{\eqref{eq:evdbcoquasiHopf}}{=} \ev{V}^{(\K)}  \left( H \otimes (\beta \otimes V)\rho_V \otimes V^* \otimes V \right) \tau_{{V,H}}\,  \frho_{V^*} \left(V \otimes   V^* \otimes \left( \alpha \otimes V \right) \rho_V \right) \left( V \otimes\db{V}^{(\K)} \right) \\
 & \stackrel{\phantom{(27)}}{=} \left( \alpha \otimes V \right) \rho_V \, \ev{V}^{(\K)} \,  \tau_{{V,H}}\,  \frho_{V^*} \left( V \otimes\db{V}^{(\K)} \right) (\beta \otimes V)\rho_V \\
 & \stackrel{\eqref{eq:rhostarcoquasiHopf}}{=} \left( H \otimes \alpha \otimes V \right) \rho_V \left( s \otimes V\right) \rho_V (\beta \otimes V)\rho_V = ((\beta * s* \alpha) \otimes V)\rho_V
\end{align*}
Therefore, $S=\beta \ast s\ast \alpha $.
\end{proof}

\begin{lemma}%\label{cor:uniqueness}
If a preantipode for a coquasi-bialgebra $B$ exists, it is unique.
\end{lemma}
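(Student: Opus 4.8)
Since $\K$ is a field, every $\K$-module is flat, so the statement is already subsumed by the uniqueness of preantipodes for $\K$-flat coquasi-bialgebras proved in \S\ref{section4}. I would nonetheless give the more informative argument below, which identifies an arbitrary preantipode with the one produced by the explicit reconstruction of Section~\ref{sec:tannaka}. So fix a preantipode $T$ for $B$ and put $\cC:={^B\M_f}$, $\forget:=\cU\colon{^B\M_f}\to\M_f$. By Remark~\ref{rem:compnabla} the category $\cC$ is right rigid, a choice of right dual of $X=(V,\rho_V)$ being $X^\star=(V^*\otimes B)^{\mathrm{co}B}$ with evaluation and dual basis built out of $T$; and by Remark~\ref{rem:reconstructioncoquasi} the coquasi-bialgebra $B$ together with its structure maps is the coendomorphism coquasi-bialgebra of $(\cC,\forget)$, whence $\vartheta^1_B(f)_X=(f\otimes\forget(X))\frho_X$ for every linear map $f$ out of $B$ and every $X$ in $\cC$. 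In particular the preantipode $S$ furnished by Proposition~\ref{lemma:pre1} satisfies $(S\otimes\forget(X))\frho_X=\nabla^{\forget}_B(\frho)_X$ by \eqref{eq:preantnabla}.

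The crux is to compute $\nabla^{\forget}_B(\frho)_X$ for $X=(V,\rho_V)$ in ${^B\M_f}$, using precisely the duals of Remark~\ref{rem:compnabla}. Unwinding \eqref{eq:Nabla} on $v\in V$: feed $v$ into $\forget(X)\otimes\db{\forget(X)}$, apply the coaction of $X^\star$ to the middle tensorand, transpose with $\tau$, and contract the remaining $\forget(X)\otimes\forget(X^\star)$ with $\ev{\forget(X)}$; since $\ev{\forget(X)}$ applies $\varepsilon$ to the $B$-leg, one is left with $\sum_i v^i_0(v)\,T(v^i_1)\otimes v_i$, where $\sum_i v^i\otimes v_i\in V^*\otimes V$ is a dual basis and $\rho^r_{V^*}(v^i)=\sum v^i_0\otimes v^i_1$. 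Now one uses the identity $\sum_i v^i_0\otimes v^i_1\otimes v_i=\sum_i v^i\otimes(v_i)_{-1}\otimes(v_i)_0$ in $V^*\otimes B\otimes V$, checked by pairing the first slot with an arbitrary $v\in V$: on the left side this is the defining relation $\sum f_0(v)f_1=\sum v_{-1}f(v_0)$ with $f=v^i$, while on the right side it is the $\K$-linearity of $\rho_V$ applied to $\sum_i v^i(v)v_i=v$. Applying $T$ to the $B$-leg and pairing with $v$ then gives $\sum_i v^i_0(v)\,T(v^i_1)\otimes v_i=(T\otimes\forget(X))\rho_V(v)$, so that $\nabla^{\forget}_B(\frho)_X=(T\otimes\forget(X))\frho_X$ for every $X$ in $\cC$, and therefore $(S\otimes\forget(X))\frho_X=(T\otimes\forget(X))\frho_X$.

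Finally, taking $X=(B,\Delta)$ and applying $B\otimes\varepsilon$ to the last identity yields $S=T$; as $T$ was an arbitrary preantipode, $S$ is the unique one. (Equivalently, without mentioning $S$: for two preantipodes $T_1,T_2$ the previous paragraph computes $\nabla^{\forget}_B(\frho)_X$ once from the duals attached to $T_1$ and once from those attached to $T_2$, while Corollary~\ref{lemma:Sunique} guarantees that $\nabla^{\forget}$ is independent of that choice, so $(T_1\otimes\forget(X))\frho_X=(T_2\otimes\forget(X))\frho_X$ for all $X$, and one concludes as above.) The only delicate point is the bookkeeping in the second paragraph --- keeping straight which tensorand carries which comodule structure through \eqref{eq:Nabla}, together with the duality identity above --- but this is essentially the calculation already performed, in the coquasi-Hopf case, in the proof of Corollary~\ref{cor:quasi-preantipode}.
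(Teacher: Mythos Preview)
Your approach is essentially the paper's own: show that any preantipode $T$ satisfies the defining relation \eqref{eq:preantnabla} by computing $\nabla^{\forget}_B(\frho)_X$ with the duals built from $T$, hence $T$ coincides with the reconstructed $S$. The computation in your second paragraph matches the paper's intended verification.

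There is one small slip in the last step: you take $X=(B,\Delta)$, but $B$ need not lie in ${^B\M_f}$ when it is infinite-dimensional, so this particular choice of $X$ is illegitimate. The conclusion is still immediate, however: from $(S\otimes\forget(X))\frho_X=(T\otimes\forget(X))\frho_X$ for all $X$ in ${^B\M_f}$ you have $\vartheta_B(S)=\vartheta_B(T)$, and since $\vartheta_B$ is a bijection (Remark~\ref{rem:reconstructioncoquasi}) this gives $S=T$ directly. Your parenthetical alternative via Corollary~\ref{lemma:Sunique} also works and avoids the issue.
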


\begin{proof}
It can be checked directly that $S_{ {B}}$ satisfies condition \eqref{eq:preantnabla} and hence $S_{ {B}}=S$, the unique linear endomorphism induced on $B$.
\begin{invisible}
In details, since ${^B}\M_f$ is right rigid, $\cU:{^{B}\M_f}\rightarrow\M_f$ is a tensor functor and $B$ is a representing object for $\Nat(\cU,-\otimes \cU)$, we have the natural transformation $\nabla^{{\cU}}_B(\rho)\in\Nat \left(\cU,B\otimes \cU\right)$ as in \eqref{eq:nabla} (in fact, $\rho=\vartheta_{B}(\id_{B})$). In view of Remark \ref{rem:compnabla}, we may compute explicitly for all $V$ in ${^{B}\M_f}$ and $y\in \cU(V)$
\begin{align*}
& \nabla^{{\cU}}_B(\rho)_{{V}}(y)\stackrel{\eqref{eq:nabla}}{=}\left( \left( B\otimes \ev{V}\otimes V\right) \, \tau_{ {V,B}}\, \rho_{ {V^{\star } }}\right) \left( \sum_{i=1}^{ d_{ {V}}}y\otimes \left(  v^{i} _{0}\otimes S_{ {B}}\left(v^{i}_{1}\right)\right) \otimes
v_{i}\right) \\
&\stackrel{\phantom{(31)}}{=}\sum_{i=1}^{ d_{ {V}}}S_{ {B}}\left( v^{i} _{1}\right)_{1} v^{i}_{0}\left(
y\right) \varepsilon\left( S_{ {B}}\left( v^{i} _{1}\right)_{2}\right)
\otimes v_{i} =\sum_{i=1}^{ d_{ {V}}}S_{ {B}}\left(  v^{i}_{0}\left(y\right) v^{i}_{1} \right)\otimes v_{i} \\
 &\stackrel{\phantom{(31)}}{=}\sum_{i=1}^{ d_{ {V}}}S_{ {B}}(y_{-1}v^{i}\left( y_0\right)
)\otimes v_{i}=\sum S_{ {B}}\left( y_{-1}\right) \otimes y_{0},
\end{align*}
so that $\nabla^{{\cU}}_B(\rho)=\left(S_{{B}}\otimes \cU\right) \rho$. This means that $S_{ {B}}$ satisfies condition \eqref{eq:preantnabla} and so it follows that $S_{ {B}}=S$, the unique linear endomorphism induced on $B$ by $\nabla^{{\cU}}_B(\rho)$.
\end{invisible}
\end{proof}

\begin{lemma}\label{lemma:preantipodemorphi}
Let $g:A\rightarrow B$ be a morphism between coquasi-bialgebras $A$ and $B$ with preantipodes $S_{ {A}}$ and $S_{ {B}}$ respectively. Then $g \, S_{ {A}}=S_{ {B}}\, g$.
\end{lemma}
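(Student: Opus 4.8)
The plan is to deduce this from Proposition~\ref{prop:preantipodemorph}. Since we are in the field case, $\K$ is a field, so $B$ is automatically $\K$-flat and the hypotheses of that Proposition on the target coquasi-bialgebra are met. First I would record that $g\colon A\to B$, being a morphism of coquasi-bialgebras, is in particular a morphism of coalgebras, so that $\Delta_B g=(g\otimes g)\Delta_A$ and $\varepsilon_B g=\varepsilon_A$, and moreover $m_B(g\otimes g)=g\,m_A$, $g\,u_A=u_B$ and $\omega_B(g\otimes g\otimes g)=\omega_A$. Then I would set $h:=g\,S_A\colon A\to B$ and check that the pair $(g,h)$ satisfies the three relations \eqref{eq:preantmorph1}, \eqref{eq:preantmorph2}, \eqref{eq:preantmorph3} of Proposition~\ref{prop:preantipodemorph}, with $C=A$ and the ambient coquasi-bialgebra being $(B,S_B)$.

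Concretely, using comultiplicativity and multiplicativity of $g$ together with the first two defining identities of the preantipode $S_A$ (Definition~\ref{def:preantipode}), one computes, for all $z\in A$,
\begin{gather*}
\sum \bigl(g S_A(z_2)\bigr)_1\otimes g(z_1)\bigl(g S_A(z_2)\bigr)_2
= \sum g\bigl(S_A(z_2)_1\bigr)\otimes g\bigl(z_1 S_A(z_2)_2\bigr)
= g S_A(z)\otimes 1_B, \\
\sum \bigl(g S_A(z_1)\bigr)_1 g(z_2)\otimes \bigl(g S_A(z_1)\bigr)_2
= \sum g\bigl(S_A(z_1)_1 z_2\bigr)\otimes g\bigl(S_A(z_1)_2\bigr)
= 1_B\otimes g S_A(z),
\end{gather*}
which are exactly \eqref{eq:preantmorph1} and \eqref{eq:preantmorph2} for $h=g S_A$; while from $\omega_B(g\otimes g\otimes g)=\omega_A$ and the third defining identity \eqref{fond S} one gets
\[
\sum \omega_B\bigl(g(z_1)\otimes g S_A(z_2)\otimes g(z_3)\bigr)
= \sum \omega_A\bigl(z_1\otimes S_A(z_2)\otimes z_3\bigr)
= \varepsilon_A(z),
\]
which is \eqref{eq:preantmorph3}. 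Proposition~\ref{prop:preantipodemorph} then gives $h=S_B\,g$, that is, $g\,S_A=S_B\,g$.

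The only thing requiring care is the Sweedler-notation bookkeeping in the three displays above, where one repeatedly invokes coassociativity and the fact that $g$ intertwines the structure maps; this is entirely routine and presents no real obstacle, so the substance of the argument is simply the verification that $h=g S_A$ fits the template of Proposition~\ref{prop:preantipodemorph}. (Alternatively, and equivalently, one could bypass that Proposition and argue directly as in its proof: transport the coquasi-Hopf bicomodule $A\,\hat\otimes\,A$ along $g\otimes g$ to $B\,\hat\otimes\,B$, note that this sends $(A\,\hat\otimes\,A)^{\mathrm{co}A}$ into $(B\,\hat\otimes\,B)^{\mathrm{co}B}$, and compare the two explicit formulas for the inverse of the counit of the adjunction $(F,G)$; but invoking Proposition~\ref{prop:preantipodemorph} is the cleaner route.)
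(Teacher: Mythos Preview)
Your proof is correct, but it takes a different route from the paper's own proof of this particular lemma. You invoke Proposition~\ref{prop:preantipodemorph} directly, verifying that $h=gS_A$ satisfies the three relations \eqref{eq:preantmorph1}--\eqref{eq:preantmorph3}; this is exactly the argument the paper already used in \S\ref{section4} to prove the general $\K$-flat version of the statement (the unnamed Proposition following Proposition~\ref{prop:preantipodemorph}). The paper's proof of Lemma~\ref{lemma:preantipodemorphi}, by contrast, is deliberately categorical: it uses the strict monoidal functor $\cG\colon{^{A}\fvect}\to{^{B}\fvect}$ induced by $g$, together with the compatibility $\nabla^{\cV}(\xi)\cG=\nabla^{\cV\cG}(\xi\cG)$ from Proposition~\ref{rem:nabla}, to show that $\vartheta_B(gS_A)=\vartheta_B(S_Bg)$ via the reconstruction machinery. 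The point of \S\ref{ssec:field} is precisely to illustrate that uniqueness and preservation of preantipodes can be \emph{recovered} from their categorical counterparts (the independence of $\nabla$ from choices of duals, its naturality in the functor), so the paper's proof is chosen to exhibit that philosophy rather than for efficiency. Your approach is shorter and more self-contained; the paper's approach buys conceptual unity with the Tannaka-Kre{\u\i}n viewpoint developed in the surrounding section.
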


\begin{proof}
Since $g$ is a coquasi-bialgebra morphism, it induces a strict monoidal functor ${^{g} \M }:{^{A} \M }\rightarrow {^{B} \M }$, which in turn restricts to a strict monoidal functor $\mathcal{G}:{^{A}\fvect }\rightarrow {^{B}\fvect }$ such that $\cV\cG=\cU$, where $\cU:{^{A}\fvect }\rightarrow {\fvect }$ and $\cV:{^{B}\fvect }\rightarrow {\fvect }$ are the forgetful functors. Observe that, in particular, this implies that $\left(g\otimes \cU(X)\right)\,\rho_{ {X}}^{ {A}}=\rho^{ {B}}_{ {\cG(X)}}$ for every $X$ in ${^A\M_f}$. Let us denote by $\vartheta:\Homk(A,-)\rightarrow \Nat \left(\cU,-\otimes \cU\right)$ the natural isomorphism such that $\vartheta_{ {V}}(f)=\left(f\otimes \cU\right)\, \rho^{ {A}}$ for all $V$ in $\M$ and $f\in\Homk(A,V)$. We want to show that $\vartheta_{ {B}}(g\, S_{ {A}})=\vartheta_{ {B}}(S_{ {B}}\, g)$. For all $X$ in ${^{A}\fvect }$ we may compute
\begin{align*}
\vartheta _{{B}} & ( g  S_{{A}})_{X} \stackrel{\eqref{eq:thetanabla}}{=} \nabla_{B}^{\cU}\left((g\otimes \cU)\,\rho^A\right)_X = \nabla_B^{\cV\cG}\left(\rho^B\cG\right)_X \stackrel{\eqref{eq:Nablacomp}}{=} \nabla_B^\cV\left(\rho^B\right)_{\cG(X)}\\
& = \left( S_B\otimes \cV\cG(X)\right) \rho^B_{\cG(X)}= \left( S_B\otimes \cU(X)\right) (g\otimes \cU(X)) \rho^A_{X} = \vartheta_B\left( S_B\, g \right)_X.
 %& \stackrel{\eqref{eq:nabla}}{=}( g\otimes \cU(X)) (A\otimes \ev{\cU\left( X\right)}\otimes \cU(X))\, \tau_{{\cU(X),A}}\,  \rho^{{A}}_{{X^\star}} \, (\cU(X)\otimes \db{\cU(X)}) \\
 %& \stackrel{\phantom{(34)}}{=} (B\otimes \ev{\cU\left( X\right)}\otimes \cU(X))\, \tau_{{\cU(X),B}}\,  \rho^{{B}}_{\cG({X^\star})} \, (\cU(X)\otimes \db{\cU(X)}) \\
%& \stackrel{\phantom{(34)}}{=} ( S_{{B}}\otimes \cV\cG(X))   \rho^{{B}}_{\cG(X)}=( S_{{B}}\otimes \cU(X)) ( g\otimes \cU(X)) \rho^{{A}}_X=\vartheta_{B}(S_{{B}}\, g)_{X}.
\end{align*}
Hence $g\, S_{ {A}}=S_{ {B}}\, g$ as claimed.
\end{proof}

\begin{remark}
We conclude this subsection with two observations. 
\begin{enumerate}
\item Let $B$ be a coquasi-bialgebra over a commutative ring $\K$. The natural coaction $\rho\in\Nat\left(\cU,B\otimes \cU\right)$ coming from ${^B\fvect}$ induces, via the isomorphism $\Homk(H_{\cU},B)\cong \Nat\left(\cU,B\otimes \cU\right)$, a canonical morphism $\mathsf{can}_B:H_{\cU}\to B$. Then, all the results in \S\ref{ssec:field} still remain true if we assume to work with $\K$-flat coquasi-bialgebras $B$ such that $\mathsf{can}_B$ is an isomorphism (mimicking \cite{LaiachiPepe,PepeJoost}, these may be referred to as \emph{Galois coalgebras}).
\item Even if the characterization of coquasi-bialgebras with preantipode in Theorem \ref{thm:characterization} was already known before (see \cite{Sch-TwoChar}), the one for coquasi-Hopf algebras is a new achievement, as far as we know.
\end{enumerate}
\end{remark}

\section{The finite-dual of a quasi-bialgebra with preantipode}\label{sec:finitedual}

Assume that $\K$ is a field. As a final application of the theory we developed, let us show that the finite dual coalgebra of a quasi-bialgebra with preantipode is a coquasi-bialgebra with preantipode (for the definition of the finite dual coalgebra we refer to \cite[Chapter VI]{Swe}, for the definition of a preantipode for a quasi-bialgebra and its properties we refer to \cite{StructTheo}). The proof of this fact lies on the following result, which can be deduced from \cite[Chapter 3, \S1.2]{Abe}.

\begin{lemma}\label{lemma:isocat}
Let $A$ be an algebra and $A^\circ$ be its finite dual coalgebra. We have an isomorphism $\cL:{^{A^\circ}\M_f}\rightarrow {_f\M_A}$ between the category of finite-dimensional left $A^\circ$-comodules and that of finite-dimensional right $A$-modules that satisfies $\cV\,\cL=\cU$, where $\cV:{_f\M_A}\rightarrow\M_f$ and $\cU:{^{A^\circ}\M_f}\rightarrow\M_f$ are the obvious forgetful functors.
\end{lemma}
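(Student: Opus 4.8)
The statement is essentially a categorical restatement of a classical fact about the finite dual of an algebra, so the strategy is to spell out the correspondence between finite-dimensional right $A$-modules and finite-dimensional left $A^\circ$-comodules, and then check compatibility with the forgetful functors. First I would recall that, for a finite-dimensional vector space $V$, endowing $V$ with a right $A$-module structure $\mu\colon V\otimes A\to V$ is the same as giving a left $A$-module structure on $V^*$, equivalently an algebra map $A\to \End{\K}{V}$, equivalently (by finite-dimensionality and the defining property of $A^\circ$, cf.\ \cite[Chapter 3, \S1.2]{Abe}) a coalgebra map $\End{\K}{V}^\circ\to A^\circ$; composing with the canonical coalgebra map $\K \to \End{\K}{V}$ dualised, or rather using that for finite-dimensional $V$ one has $\End{\K}{V}^*\cong V\otimes V^*$ as a coalgebra, this is the same as a left $A^\circ$-comodule structure $\rho\colon V\to A^\circ\otimes V$ on $V$ itself. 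Concretely, if $\{e_i\}$ is a basis of $V$ and $\mu(e_j\otimes a)=\sum_i \langle a_{ij},a\rangle e_i$ for suitable $a_{ij}\in A^*$, the key point (the one place where finite-dimensionality is used) is that each $a_{ij}$ lies in $A^\circ$, because the matrix coefficients of a finite-dimensional representation are representative functions; then $\rho(e_j)=\sum_i a_{ij}\otimes e_i$ defines the comodule structure, and conversely a comodule structure produces the matrix coefficients and hence the module structure.

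Second, I would check that these assignments are functorial and mutually inverse: a $\K$-linear map $f\colon V\to W$ is right $A$-linear if and only if it is left $A^\circ$-colinear, which is a direct diagram chase once one unwinds the pairing between $A$ and $A^\circ$. This gives the functor $\cL\colon {^{A^\circ}\M_f}\to{_f\M_A}$ together with an inverse, so $\cL$ is an isomorphism of categories. Finally, by construction $\cL$ does not change the underlying vector space nor the underlying linear maps, so $\cV\,\cL=\cU$ holds strictly, which is the required compatibility with the forgetful functors.

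\textbf{Main obstacle.} The only genuinely non-formal point is the assertion that the matrix coefficients $a_{ij}$ of a finite-dimensional $A$-module land in $A^\circ$ rather than merely in $A^*$; this is exactly the content of the characterisation of $A^\circ$ as the set of representative linear functionals (those $\lambda\in A^*$ whose kernel contains a cofinite ideal, equivalently $\lambda\in A^\circ$ iff the left translates of $\lambda$ span a finite-dimensional subspace of $A^*$), and it is precisely what is recorded in \cite[Chapter 3, \S1.2]{Abe}. Everything else is bookkeeping: unwinding the (co)module axioms through the evaluation pairing $A\otimes A^\circ\to\K$ and verifying the two composites $\cL\circ\cL^{-1}$ and $\cL^{-1}\circ\cL$ are identities on objects and morphisms. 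Since the excerpt explicitly allows us to \emph{deduce} the lemma from \cite[Chapter 3, \S1.2]{Abe}, I would keep the write-up short, citing that reference for the module/comodule dictionary and limiting the original argument to the observation that $\cL$ leaves underlying spaces and maps untouched, whence $\cV\,\cL=\cU$.
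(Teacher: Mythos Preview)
Your proposal is correct and matches the paper's treatment: the paper does not give a proof but simply defers to \cite[Chapter~3, \S1.2]{Abe} and then records the explicit formulas for $\cL$ and its inverse $\cR$ (namely $\mu_N^\rho(n\otimes a)=\sum n_{-1}(a)n_0$ and $\rho_M^\mu(m)=\sum_i (e^i\mu_m)\otimes e_i$), which is exactly the matrix-coefficient dictionary you outline. Your identification of the one non-formal point---that the coefficients $a_{ij}$ land in $A^\circ$---is precisely the content borrowed from Abe, and your observation that $\cL$ fixes underlying spaces and maps is the paper's ``$\cV\cL=\cU$'' (and $\cU\cR=\cV$).
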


For the sake of completeness, let us recall that $\cL$ associates every  left $A^\circ$-comodule $(N,\rho_{ {N}})$ with the right $A$-module $\left(N,\mu^{ {\rho}}_{ {N}}\right)$ where the action is given by $\mu_{ {N}}^{ {\rho}}\left(n\otimes a\right)=\sum n_{-1}(a)n_0.$
Its inverse $\cR:{_f\M_{A}}\rightarrow{^{A^\circ}\M_f}$ assigns to every finite-dimensional right $A$-module $(M,\mu_{ {M}})$, the left $A^\circ$-comodule $(M,\rho^{ {\mu}}_{ {M}})$ with coaction
\begin{equation}\label{eq:coaction}
\rho_{ {M}}^{ {\mu}}\left(m\right)=\sum_{i=1}^{ d_{ {M}}}\left(e^i \mu_m\right)\otimes e_i 
\end{equation}
where $\mu_m(a):=\mu_{ {M}}(m\otimes a)$ for all $a\in A$ and $\sum_{i=1}^{ d_{ {M}}}e^i\otimes e_i\in M^\ast\otimes M$ is a dual basis for $M$ as a vector space. Notice that ${\cU}\cR=\cV$ as well.

\begin{lemma}\label{lemma:rigidmod}
Let $\left(A,m,u,\Delta,\varepsilon,\Phi,S\right)$ be a quasi-bialgebra with preantipode. The category of finite-dimensional right $A$-modules ${_f\M_A}$ is a right rigid monoidal category with quasi-monoidal forgetful functor $\cV:{_f\M_A}\rightarrow \M_f$.
\end{lemma}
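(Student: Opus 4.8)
The plan is to transport structure along the standard monoidal equivalence between finite-dimensional right $A$-modules and finite-dimensional left $A^\circ$-comodules, and then invoke the machinery already developed in the paper. First I would recall that for a quasi-bialgebra $(A,m,u,\Delta,\varepsilon,\Phi)$ the category ${_f\M_A}$ is monoidal: the tensor product of two right $A$-modules carries the diagonal action through $\Delta$, the unit is $\K$ with action through $\varepsilon$, and the associativity, left and right unit constraints are given by the action of $\Phi$, $\Phi$-components, respectively (this is the dual of the coquasi-bialgebra situation of \S\ref{Subsec:moncat}). The forgetful functor $\cV\colon {_f\M_A}\to\M_f$ is quasi-monoidal with $\psi$ and $\psi_0$ the identities, since the tensor product of modules is the tensor product of underlying vector spaces; it is neutral because the unit constraints of ${_f\M_A}$ act as the identity on underlying spaces after applying $\varepsilon u=\id_\K$.

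Next I would use Lemma \ref{lemma:isocat}, which gives the isomorphism of categories $\cL\colon {^{A^\circ}\M_f}\to {_f\M_A}$ with $\cV\,\cL=\cU$. The key point is that $\cL$ (equivalently $\cR$) is in fact \emph{monoidal}: the coalgebra structure of $A^\circ$ is dual to the algebra structure of $A$, so the diagonal $A^\circ$-coaction on a tensor product of comodules corresponds, under $\cR$, precisely to the diagonal $A$-action through $\Delta$, and the reassociator $\omega$ of $A^\circ$ is the functional $a\otimes b\otimes c\mapsto \langle \Phi, ?\rangle$ dual to $\Phi$, so the associativity constraint of ${^{A^\circ}\M_f}$ matches that of ${_f\M_A}$. (This is part of the statement I am allowed to draw from \cite[Chapter 3, \S1.2]{Abe} / the discussion around Lemma \ref{lemma:isocat}; I would spell out only the compatibility with tensor and unit, not re-derive the whole correspondence.) Since a preantipode is a purely internal datum of the coalgebra-with-extra-structure, the hypothesis that $A$ has a preantipode $S$ translates, via the finite-dual construction, into the statement that $A^\circ$ is a coquasi-bialgebra with preantipode — this is exactly the content being proved in \S\ref{sec:finitedual}, and here I would simply cite it (or, if circularity is a concern, observe that it is precisely the instance of the main construction we are about to run).

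With these identifications in place, the conclusion is immediate: by Theorem \ref{thm:characterization} (first assertion), since $A^\circ$ is a coquasi-bialgebra admitting a preantipode, the category ${^{A^\circ}\M_f}$ is right rigid. Transporting right duals, evaluations and dual bases along the monoidal isomorphism $\cL$ shows that ${_f\M_A}$ is right rigid as well, and the forgetful functor $\cV$ is the quasi-monoidal functor identified above. The main obstacle I anticipate is the bookkeeping of the third paragraph — verifying cleanly that $\cL$ (resp. $\cR$) is monoidal with the explicit coaction \eqref{eq:coaction}, and that a preantipode for the quasi-bialgebra $A$ yields a preantipode for the coquasi-bialgebra $A^\circ$ — but this is routine dualization (pairing the defining identities of a preantipode for $A$ against elements of $A^\circ$) and involves no genuinely new idea; once it is granted, rigidity of ${_f\M_A}$ follows formally from Theorem \ref{thm:characterization} and Remark \ref{rem:compnabla}.
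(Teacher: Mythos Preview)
Your approach is genuinely different from the paper's and, as written, has a real gap.

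The paper does not go through $A^\circ$ at all. It gives a completely direct construction: for a finite-dimensional right $A$-module $M$ one sets
\[
M^\star \;=\; \frac{A\otimes M^\ast}{A^+\,(A\otimes M^\ast)},\qquad
\overline{a\otimes f}\cdot x=\overline{ax\otimes f},
\]
\[
\ev{M}\big(m\otimes \overline{a\otimes f}\big)=f\big(m\cdot S(a)\big),\qquad
\db{M}(1_\K)=\sum_i \overline{1_A\otimes e^i}\otimes e_i,
\]
and checks the zigzag identities using only the preantipode axioms \eqref{eq:qbP} for $A$. This is short, self-contained, and is then \emph{used} to prove Proposition \ref{prop:finitedual}, i.e.\ that $A^\circ$ acquires a preantipode via reconstruction.

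Your plan reverses this logical flow, and that is where the problem lies. Citing \S\ref{sec:finitedual} to get a preantipode on $A^\circ$ is circular, since Proposition \ref{prop:finitedual} rests on Lemma \ref{lemma:rigidmod}. Your fallback --- ``routine dualization (pairing the defining identities of a preantipode for $A$ against elements of $A^\circ$)'' --- hides the actual obstacle: before you can dualize anything you must know that $S^\ast$ restricts to a map $A^\circ\to A^\circ$, i.e.\ that $f\circ S$ lies in $A^\circ$ whenever $f$ does. Since $S$ is neither an algebra nor an anti-algebra morphism, this is not automatic; in the paper it is obtained from the anti-multiplicativity formula \eqref{antimultpreanti} in Appendix \ref{sec:appendix2}, whose proof occupies three lemmas. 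Only once that is in hand does pairing the axioms \eqref{eq:qbP} against $f\in A^\circ$ yield the coquasi preantipode identities of Definition \ref{def:preantipode}. So the step you flag as bookkeeping is in fact the entire content of the appendix plus the explicit Remark after Proposition \ref{prop:finitedual}.

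In short: your route can be made to work, but it is strictly longer (it needs \eqref{antimultpreanti} and a direct verification that $\cR$ is strict monoidal, which the paper only observes \emph{after} the fact), and as stated it either invokes a circular reference or understates the one non-routine step. The paper's direct construction of $M^\star$ avoids all of this.
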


\begin{proof}
As it happens for coquasi-bialgebras, the axioms of a quasi-bialgebra ensures that the category of right $A$-modules $\M_A$ becomes a monoidal category with tensor product the tensor product over $\Bbbk$, unit object $\Bbbk$ itself, and associativity constraint
\begin{equation*}
\calpha_{ {M,N,P}}\left(\left(m\otimes n\right)\otimes p\right)=\left(m\otimes \left(n\otimes p\right)\right)\cdot \Phi^{-1}
\end{equation*}
for all $M$, $N$, $P$ in ${\M_A}$ and $m$, $n$, $p$ in $M$, $N$, $P$ respectively.
The unit constraints are the same of $\M$. In particular, the forgetful functor $\cV:\M_A\rightarrow \M$ is a quasi-monoidal functor and the same property holds for its restriction to finite-dimensional modules.
%By the right-handed version of \cite[Theorem 4]{StructTheo}, the existence of a preantipode for $A$ is equivalent to a structure theorem for quasi-Hopf $A$-bimodules. As a consequence, it is possible to replicate the same procedure of Appendix \ref{sec:appendix} to show that ${_f\M_A}$ is right rigid. Otherwise, 
One may check directly that a dual object of an $A$-module $M$ is given by % the right $A$-module
\begin{equation*}
M^\star:=\frac{A\otimes M^\ast}{A^+\left(A\otimes M^\ast\right)}
\end{equation*}
where $A^+:=\ker\left(\varepsilon\right)$ and $M^\ast$ is the $\Bbbk$-linear dual of $M$. The $A$-module structure on $M^\star$ is $\overline{a\otimes f}\cdot x=\overline{ax\otimes f}$ for all $a,x\in A$ and $f\in M^\ast$. Evaluation and dual basis maps are explicitly given by
\begin{equation*}
\ev{M}\left(m\otimes \overline{a\otimes f}\right)=f\left(m\cdot S(a)\right) \qquad\text{and}\qquad \db{M}\left(1_{\Bbbk}\right)=\sum_{i=1}^{ d_{ {M}}}\overline{1_A\otimes e^i}\otimes e_i
\end{equation*}
for all $m\in M$, $f\in M^\ast$ and  $a\in A$ and where $\sum_{i=1}^{ d_{ {M}}} e^i\otimes e_i$ is a dual basis of $M$ as a finite-dimensional vector space.
\end{proof}

\begin{proposition}\label{prop:finitedual}
Let $\left(A,m,u,\Delta,\varepsilon,\Phi,S\right)$ be a quasi-bialgebra with preantipode. Let $\left(A^\circ,\Delta_\circ,\varepsilon_\circ\right)$ be its finite dual coalgebra. Then $A^\circ$ can be endowed with a structure of a coquasi-bialgebra with preantipode.
\end{proposition}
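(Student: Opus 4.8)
The plan is to realize the finite dual $A^\circ$ as (the underlying coalgebra of) the coendomorphism coquasi-bialgebra of a right rigid monoidal category, and then to read off the coquasi-bialgebra structure and the preantipode from Theorem~\ref{th:reconstructionpreantipode}. The two lemmas above supply exactly the inputs needed: Lemma~\ref{lemma:rigidmod} says that the category ${_f\M_A}$ of finite-dimensional right $A$-modules is right rigid monoidal with quasi-monoidal forgetful functor $\cV:{_f\M_A}\to\M_f$, and Lemma~\ref{lemma:isocat} gives an isomorphism of categories $\cL:{^{A^\circ}\M_f}\to{_f\M_A}$ with $\cV\cL=\cU$, where $\cU:{^{A^\circ}\M_f}\to\M_f$ is the forgetful functor. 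First I would note that $\cV$ is a \emph{neutral} quasi-monoidal functor: it is strict (its structure isomorphisms are the identities, as in the proof of Lemma~\ref{lemma:rigidmod}) and the unit constraints of ${_f\M_A}$ are the canonical ones of $\M$, so the identities \eqref{eq:unitfunctor} are satisfied; moreover ${_f\M_A}$ is plainly essentially small. Hence $({_f\M_A},\cV)$ meets the hypotheses of Theorem~\ref{th:reconstructionpreantipode}, which therefore produces a preantipode $S$ on the coendomorphism coquasi-bialgebra $H$ of $({_f\M_A},\cV)$.

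The heart of the argument is then the identification of $H$, as a coalgebra, with the finite dual $(A^\circ,\Delta_\circ,\varepsilon_\circ)$. By the construction recalled in \S\ref{ssec:classical}, $H$ represents the functor $\Nat(\cV,-\otimes\cV)$, its underlying $\K$-module being $\coend(\cV\otimes\cV^\ast)$ and its comultiplication and counit being uniquely determined by the universal coaction $\frho:\cV\to H\otimes\cV$. Precomposition with the isomorphism of categories $\cL$ (together with $\cV\cL=\cU$) yields a natural isomorphism $\Nat(\cV,-\otimes\cV)\cong\Nat(\cU,-\otimes\cU)$, so $H$ also represents $\Nat(\cU,-\otimes\cU)$; but this functor is represented, together with the canonical coaction of the comodules in ${^{A^\circ}\M_f}$, by $(A^\circ,\Delta_\circ,\varepsilon_\circ)$ — this is the classical fact recalled in Remark~\ref{rem:reconstructioncoquasi}, \ie\cite[Lemma~2.2.1]{Sch-Tannaka}. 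Uniqueness of representing objects (and of the associated universal elements) then gives an isomorphism $H\cong A^\circ$ of $\K$-modules identifying $(\Delta_H,\varepsilon_H)$ with $(\Delta_\circ,\varepsilon_\circ)$.

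Finally I would transport the multiplication, the unit and the reassociator of $H$, as well as the preantipode $S$, along this coalgebra isomorphism: this is what endows $(A^\circ,\Delta_\circ,\varepsilon_\circ)$ with the structure of a coquasi-bialgebra together with a preantipode, which is the assertion. The substantial content is concentrated in Lemma~\ref{lemma:rigidmod} — where the right duals $M^\star=(A\otimes M^\ast)/A^{+}(A\otimes M^\ast)$ are built out of the preantipode of $A$, via the formula for $\ev{M}$ involving $S$ — and, to a much lesser extent, in making precise the identification of $\coend(\cV\otimes\cV^\ast)$ with the finite dual through Lemma~\ref{lemma:isocat} and the universal property of the coend; once these are in place, the remaining work is bookkeeping in transport of structure, so I do not expect a serious obstacle.
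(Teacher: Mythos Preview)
Your proposal is correct and follows essentially the same approach as the paper: use Lemma~\ref{lemma:isocat} together with Remark~\ref{rem:reconstructioncoquasi} to identify $A^\circ$ as a representing object for $\Nat(\cV,-\otimes\cV)$, invoke Lemma~\ref{lemma:rigidmod} for the right rigidity of ${_f\M_A}$, and then apply Theorem~\ref{th:reconstructionpreantipode}. The only cosmetic difference is that the paper phrases the identification directly (``$A^\circ$ itself is a representing object'') rather than first producing $H$ and then transporting structure along $H\cong A^\circ$, but this is the same argument.
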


\begin{proof}
Denote by $\cV:{_f\M_A}\rightarrow\fvect$ and $\cU:{^{A^\circ}\M_f}\rightarrow\fvect$ the forgetful functors. As a consequence of Lemma \ref{lemma:isocat}, we have a chain of natural isomorphism
\begin{equation*}
\mathrm{Nat}\left(\cV,-\otimes \cV\right)\cong \mathrm{Nat}\left(\cU,-\otimes \cU\right)\cong \Homk\left(A^\circ,-\right)
\end{equation*}
which allows us to consider $A^\circ$ itself as a representing object for $\mathrm{Nat}\left(\cV,-\otimes \cV\right)$.
If we consider then the category of finite-dimensional right $A$-modules ${_f\M_A}$ as a right rigid monoidal category together with the quasi-monoidal forgetful functor $\cV:{_f\M_A}\rightarrow \M_f$, then $A^\circ$ can be endowed with a structure of a coquasi-bialgebra with preantipode in view of Theorem \ref{th:reconstructionpreantipode}.
\end{proof}

\begin{remark}
It is worthy to point out that the corestriction $\cV^{A^\circ}:{_f\M_A}\rightarrow{^{A^\circ}\M_f}$ of the functor $\cV^{A^\circ}:{_f\M_A}\rightarrow{^{A^\circ}\M}$ provided by Theorem \ref{th:reconstructionpreantipode} coincides with the functor $\cR$, which becomes this way a strict monoidal functor.
\end{remark}

\begin{remark}
If we want to know explicitly the coquasi-bialgebra structure on $A^\circ$ we may proceed as follows. First of all observe that the quasi-monoidal structure on $\cV:{_f\M_A}\rightarrow\fvect$ is the strict one: $\varphi_{ {M,N}}=\mathrm{id}_{M\otimes N}$ and $\varphi_0=\mathrm{id}_{\Bbbk}$. Secondly, for every object $M$ in ${_f\M_A}$ the natural transformation $\rho_{ {M}}:{\cV}(M)\rightarrow A^\circ\otimes {\cV}(M)$ is given by the coaction \eqref{eq:coaction}. Let us denote by $\sum_{i=1}^{d_{ {M}}}e^i_{ {M}}\otimes e^{ {M}}_i\in M^\ast\otimes M$ a dual basis for $M$ as a vector space, for all $M$ in ${_f\M_A}$. If we denote by $\mu^{ {M\otimes N}}$ the $A$-action on the tensor product, then
\begin{equation*}
\rho_{ {M\otimes N}}\left(x\right)=\sum_{i,j}\left(\left(e^i_{ {M}}\otimes e^j_{ {N}}\right)\, \mu^{ {M\otimes N}}_{x}\right)\otimes \left(e^{ {M}}_i\otimes e^{ {N}}_j\right)
\end{equation*}
for all $x\in M\otimes N$, where we considered $M^\ast\otimes N^\ast$ injected in $\left(M\otimes N\right)^\ast$. Furthermore, it is well-known from the associative case that the convolution product $*$ restricts to a morphism $*:A^\circ\otimes A^\circ \rightarrow A^\circ$. It is also clear that $\varepsilon\in A^\circ$. To show that they are the multiplication and the unit induced on $A^\circ$, denote by $\mu^{ {M}}$ and $\mu^{ {N}}$ the $A$-actions on $M$ and $N$ respectively and compute for $\sum_{i=1}^tm_i\otimes n_i\in M\otimes N$
\begin{equation*}
(A^\circ\otimes \varphi_{ {M,N}})\left(\vartheta_{ {A^\circ}}^2(*)_{ {M,N}}\left(\sum_{i=1}^tm_i\otimes n_i\right)\right) \hspace{-2pt} =\sum_{i,h,k}\left((e_{ {M}}^h \mu^{ {M}}_{m_i})*(e_{ {N}}^k \mu^{ {N}}_{n_i})\right)\otimes (e^{ {M}}_h\otimes e^{ {N}}_k).
\end{equation*}
Since for every $a\in A$, $f\in M^\ast$, $g\in N^\ast$ and $x=\sum_{i=1}^tm_i\otimes n_i\in M\otimes N$ we have
\begin{equation*}
\sum_{i=1}^t\left(\left(f\, \mu^{ {M}}_{m_i}\right)*\left(g\, \mu^{ {N}}_{n_i}\right)\right)(a)=\sum_{i=1}^t\left(f\, \mu^{ {M}}_{m_i}\right)(a_1)\left(g\, \mu^{ {N}}_{n_i}\right)(a_2)=\left(f\otimes g\right)\mu^{ {M\otimes N}}_{x}(a),
\end{equation*}
we conclude that $(A^\circ\otimes \varphi_{ {M,N}})\, \vartheta_{ {A^\circ}}^2(*)_{ {M,N}}=\rho_{ {M\otimes N}}\,\varphi_{ {M,N}}$ and by uniqueness of the morphism $A^\circ\otimes A^\circ\rightarrow A^\circ$ satisfying this relation we have that the multiplication induced on $A^\circ$ is exactly $*$. Moreover, if we compute
\begin{equation*}
r_{ {A^\circ}}\left(\rho_{\Bbbk}\left(1_{\Bbbk}\right)\right)=r_{ {A^\circ}}\left(\varepsilon\otimes 1_\Bbbk\right)=\varepsilon,
\end{equation*}
then we recover that the unit of the multiplication $*$ is $\varepsilon$, in view of \eqref{eq:defu} and the fact that $\varphi_0=\text{id}_{\Bbbk}$. Consider also the assignment
\begin{equation*}
\omega:A^\circ \otimes A^\circ \otimes A^\circ \rightarrow \Bbbk; \quad \omega\left(f\otimes g\otimes h\right)= \sum f\left(\Phi^1\right)g\left(\Phi^2\right)h\left(\Phi^3\right).
\end{equation*}
For every $M$, $N$, $P$ in ${_f\M_A}$ and all $m\in M$, $n\in N$, $p\in P$, it satisfies
\begin{align*}
& \varphi_{ {M\otimes N,P}}\left(\left(\varphi_{ {M,N}}\otimes \cV(P)\right)\left(\vartheta_{\Bbbk}^3\left(\omega\right)_{ {M,N,P}}(m\otimes n\otimes p)\right)\right) \\
 & =\sum_{i,j,k}\omega\left(\left(e_{ {M}}^i\,\mu_m^{ {M}}\right)\otimes \left(e_{ {N}}^j\, \mu_n^{ {N}}\right)\otimes \left(e_{ {P}}^k\,\mu_p^{ {P}}\right)\right)e^{ {M}}_i\otimes e^{ {N}}_j\otimes e^{ {P}}_k \\
& = \sum m\cdot \Phi^1\otimes n\cdot \Phi^2 \otimes p\cdot \Phi^3,
\end{align*}
whence $\varphi_{ {M\otimes N,P}} \, (\varphi_{ {M,N}}\otimes \cV(P) ) \, \vartheta_{\Bbbk}^3 (\omega )_{ {M,N,P}} = \cV (\calpha_{ {M,N,P}}^{-1} ) \, \varphi_{ {M,N\otimes P}} \, (\cV(M)\otimes \varphi_{ {N,P}} )
$ and so $\omega$ is in fact the induced reassociator. The antipode can be constructed explicitly as well. Consider the transpose $S^*:A^*\rightarrow A^*$. Let us show firstly that $S^\ast$ factors through a linear map $S^\circ:A^\circ\rightarrow A^\circ$; the proof relies on formula \eqref{antimultpreanti} from Appendix \ref{sec:appendix2}. Pick $f\in A^\circ$ and compute
\begin{equation*}
\begin{split}
& S^*\left(f\right)\left(ab\right) = f\left(S\left(ab\right)\right) \stackrel{\eqref{antimultpreanti}}{=} \sum f\left( S\left( \varphi ^{1}b\right) \varphi ^{2}S\left( \psi ^{1}\varphi ^{3}\right) \psi ^{2}S\left( a\psi ^{3}\right)\right) \\
 & = \sum f_1 S\left( \varphi ^{1}b\right)f_2\left( \varphi ^{2}S\left( \psi ^{1}\varphi ^{3}\right) \psi ^{2}\right) f_3S\left( a\psi ^{3}\right) \\
 & = \left(\sum \left(\psi ^{3}\rightharpoonup f_3S\right) \otimes f_2\left( \varphi ^{2}S\left( \psi ^{1}\varphi ^{3}\right) \psi ^{2}\right) \left(f_1 S\leftharpoonup\varphi ^{1}\right)\right) \left(a\otimes b\right).
\end{split}
\end{equation*}
Since this implies that $m^*\left(S^*\left(f\right)\right)\in A^\ast\otimes A^\ast$, in view of \cite[Proposition 6.0.3]{Swe} we have that $S^*\left(f\right)\in A^\circ$. Let us prove now that $S^\circ$ satisfies the relation $\vartheta_{ {A^\circ}}(S^\circ)=\nabla_{ {A^\circ}}^{ {\cV}}\left(\rho\right)$. For all $M$ in ${_f\M_A}$ and all $m\in M$ we need to show that
\begin{equation}\label{eq:condpreantipdoe}
\sum S^\circ\left(m_{-1}\right)\otimes m_0=\sum_{i=1}^{d_{ {M}}}\left(\overline{1_A\otimes e^i}\right)_0(m)\left(\overline{1_A\otimes e^i}\right)_{-1}\otimes e_i.
\end{equation}
Since $M^\star$ is finite-dimensional, we may fix a dual basis $\sum_{j=1}^{d_{ {M^\star}}}\gamma^j\otimes \gamma_j$ of $M^\star$ as an object in $\fvect$ and then, by \eqref{eq:coaction}, the right-hand member of \eqref{eq:condpreantipdoe} can be rewritten as
\begin{equation*}
\sum_{i=1}^{d_{ {M}}}\sum_{j=1}^{d_{ {M^\star}}}\gamma_j(m)\left(\gamma^j\mu_{\overline{1_A\otimes e^i}}^{ {M^\star}}\right)\otimes e_i.
\end{equation*}
Let us focus on $\sum_{j=1}^{d_{ {M^\star}}}\gamma_j(m)\left(\gamma^j\mu_{\overline{1_A\otimes e^i}}^{ {M^\star}}\right)\in A^\circ$. For all $a\in A$,
\begin{equation*}
\sum_{j=1}^{d_{ {M^\star}}}\gamma_j(m)\left(\gamma^j\mu_{\overline{1_A\otimes e^i}}^{ {M^\star}}\right)(a)=\sum_{j=1}^{d_{ {M^\star}}}\gamma_j(m)\gamma^j\left(\overline{a\otimes e^i}\right)=\overline{a\otimes e^i}(a) =e^i(m\cdot S(a))
\end{equation*}
and since $e^i(m\cdot S(a))=S^\circ\left(e^i\, \mu^{ {M}}_m\right)(a)$, we have
\begin{equation*}
\sum_{i=1}^{d_{ {M}}}\sum_{j=1}^{d_{ {M^\star}}}\gamma_j(m)\left(\gamma^j\mu_{\overline{1_A\otimes e^i}}^{ {M^\star}}\right)\otimes e_i =\sum_{i}S^\circ\left(e^i\, \mu^{ {M}}_m\right)\otimes e_i .
\end{equation*}
We can conclude then that relation \eqref{eq:condpreantipdoe} is satisfied, as desired.
\end{remark}

\begin{remark}
The fact that the finite dual coalgebra of a quasi-bialgebra is a coquasi-bialgebra has already been shown in \cite[\S5.2]{ArdiLaiachiPaolo} with a different approach.
\end{remark}

\appendix

\section{A relation for the preantipode of a quasi-bialgebra}\label{sec:appendix2}

Recall from \cite{StructTheo} that a preantipode for a quasi-bialgebra $(A,\Delta,\varepsilon,m,u,\Phi)$ is a $\K$-linear map $\varfun{S}{A}{A}$ that satisfies
\begin{gather}
\sum a_1S(ba_2)=\varepsilon(a)S(b)= \sum S(a_1b)a_2, \qquad \sum \Phi^1S(\Phi^2)\Phi^3=1, \label{eq:qbP}
\end{gather}
for all $a,b\in A$, where $\sum \Phi^1\otimes \Phi^2\otimes \Phi^3=\Phi$. Let us introduce also the following extended notation for the reassociator and its inverse:
\begin{gather*}
\Phi  =\sum \Phi ^{1}\otimes \Phi ^{2}\otimes \Phi ^{3}=\sum \Psi
^{1}\otimes \Psi ^{2}\otimes \Psi ^{3}=\ldots  \\
\Phi ^{-1} =\sum \phi ^{1}\otimes \phi ^{2}\otimes \phi ^{3}=\sum
\psi ^{1}\otimes \psi ^{2}\otimes \psi ^{3}=\ldots 
\end{gather*}
Let $(A,m,u,\Delta ,\varepsilon ,\Phi ,S)$ be a quasi-bialgebra with
preantipode and consider the $A$-actions on $\mathrm{End}(A)=\Homk(A,A)$  defined by $\left(f\leftharpoonup a\right)(b)  =  f(ab)$ and $\left(a\rightharpoonup f\right)(b)  =  f(ba)$ for all $a,b\in A$ and for all $f\in\End{}{A}$. Define the elements
\begin{eqnarray}
p :=\sum \varphi ^{1}\otimes \varphi ^{2}\left( \varphi ^{3}\rightharpoonup S\right) & \in & A\otimes \End{}{A} , \label{eq:Defp} \\
q :=\sum \left( S \leftharpoonup \varphi ^{1}\right) \varphi ^{2}\otimes \varphi ^{3} & \in & \End{}{A}\otimes A, \notag %\label{eq:Defq}
\end{eqnarray}
where $\left(x\left(y\rightharpoonup f\right)\right)(a)  =  xf(ay)$ and $\left(\left(f\leftharpoonup x\right)y\right)(a) = f(ax)y$ for all $a,x,y\in A$ and for all $f\in \End{}{A}$. Let us introduce the following notation for shortness:
\begin{equation*}
p:=\sum p^1\otimes p^2 \qquad \textrm{ and } \qquad q:=\sum q^1\otimes q^2.
\end{equation*}

\begin{lemma*}
\label{Lemma-p-q-quasicase}
In the foregoing notation we have that for every $a\in A$
\begin{equation}\label{p-q-def-quasicase}
\begin{split}
\sum p^1\otimes p^2(a) &=\sum \varphi _{1}^{1}\psi ^{1}\otimes \varphi _{2}^{1}\psi ^{2}\Phi
^{1}S\left( a\varphi ^{2}\psi _{1}^{3}\Phi ^{2}\right) \varphi ^{3}\psi
_{2}^{3}\Phi ^{3},  \\
\sum q^1(a)\otimes q^2 &=\sum \Phi ^{1}\varphi _{1}^{1}\psi ^{1}S\left( \Phi ^{2}\varphi
_{2}^{1}\psi ^{2} a\right) \Phi ^{3}\varphi ^{2}\psi _{1}^{3}\otimes \varphi
^{3}\psi _{2}^{3}.
\end{split}
\end{equation}
Moreover, the following relations hold for every $a,b\in A$
\begin{align}
\sum p^1a\otimes p^2(b) & = \sum a_{11}p^1\otimes a_{12}p^2(ba_2), \label{eq:pInvariant} \\
\sum q^1(a)\otimes bq^2 & = \sum q^1(b_1a)b_{21}\otimes q^2b_{22}. \label{eq:qInvariant}
\end{align}
\end{lemma*}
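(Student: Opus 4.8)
The statement collects four identities which form two left--right symmetric pairs: \eqref{p-q-def-quasicase} (the two explicit formulas) and \eqref{eq:pInvariant}--\eqref{eq:qInvariant} (the two invariance relations). In each pair the second identity is obtained from the first by reversing the order of all products, so the plan is to treat only the case of $p$ in both pairs, the case of $q$ being its mirror. The only tools I would use are the two relations \eqref{eq:qbP} defining the preantipode together with the structural axioms of the quasi-bialgebra $A$: the pentagon $(1_A\otimes\Phi)(\id\otimes\Delta\otimes\id)(\Phi)(\Phi\otimes 1_A)=(\id\otimes\id\otimes\Delta)(\Phi)(\Delta\otimes\id\otimes\id)(\Phi)$, the counit normalization $(\id\otimes\varepsilon\otimes\id)(\Phi)=1_A\otimes 1_A$ (whence also $(\id\otimes\varepsilon\otimes\id)(\Phi^{-1})=1_A\otimes 1_A$), and the coassociativity-up-to-$\Phi$ relation $\Phi\,(\Delta\otimes\id)\Delta(a)=(\id\otimes\Delta)\Delta(a)\,\Phi$, which in components reads $\sum a_{11}\varphi^1\otimes a_{12}\varphi^2\otimes a_2\varphi^3=\sum\varphi^1a_1\otimes\varphi^2a_{21}\otimes\varphi^3a_{22}$.

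For the first identity in \eqref{p-q-def-quasicase} I would first unravel Sweedler's notation and observe that its right-hand side is $\sum L^1\otimes L^2\,S\!\left(aL^3\right)L^4$, where $\sum L^1\otimes L^2\otimes L^3\otimes L^4$ is the element of $A^{\otimes 4}$ given by $(\Delta\otimes\id\otimes\id)(\Phi^{-1})\,(\id\otimes\id\otimes\Delta)(\Phi^{-1})\,(1_A\otimes\Phi)$. Taking inverses in the pentagon yields $(\Delta\otimes\id\otimes\id)(\Phi^{-1})\,(\id\otimes\id\otimes\Delta)(\Phi^{-1})=(\Phi^{-1}\otimes 1_A)\,(\id\otimes\Delta\otimes\id)(\Phi^{-1})\,(1_A\otimes\Phi^{-1})$, so that the trailing factor $(1_A\otimes\Phi^{-1})(1_A\otimes\Phi)=1_{A^{\otimes 4}}$ cancels and one is left with $\sum L^1\otimes L^2\otimes L^3\otimes L^4=(\Phi^{-1}\otimes 1_A)\,(\id\otimes\Delta\otimes\id)(\Phi^{-1})=\sum\varphi^1\psi^1\otimes\varphi^2\psi^2_1\otimes\varphi^3\psi^2_2\otimes\psi^3$. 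Hence the right-hand side of the identity becomes $\sum\varphi^1\psi^1\otimes\varphi^2\psi^2_1\,S\!\left(a\varphi^3\psi^2_2\right)\psi^3$; applying the first relation in \eqref{eq:qbP} in the form $\sum\psi^2_1\,S\!\left((a\varphi^3)\psi^2_2\right)=\varepsilon(\psi^2)S(a\varphi^3)$ and then the counit normalization $\sum\psi^1\varepsilon(\psi^2)\otimes\psi^3=1_A\otimes 1_A$ collapses it to $\sum\varphi^1\otimes\varphi^2S(a\varphi^3)=\sum p^1\otimes p^2(a)$. The second identity in \eqref{p-q-def-quasicase} I would handle by the mirror computation: its right-hand side is $\sum M^1\,S\!\left(M^2a\right)M^3\otimes M^4$ with $\sum M^1\otimes\cdots\otimes M^4=(\Phi\otimes 1_A)\,(\Delta\otimes\id\otimes\id)(\Phi^{-1})\,(\id\otimes\id\otimes\Delta)(\Phi^{-1})$, which the pentagon reduces to $(\id\otimes\Delta\otimes\id)(\Phi^{-1})\,(1_A\otimes\Phi^{-1})$, and then the second relation in \eqref{eq:qbP} together with the counit normalization give back $\sum S(\varphi^1a)\varphi^2\otimes\varphi^3=\sum q^1(a)\otimes q^2$.

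For the invariance relations I would argue directly from \eqref{eq:qbP}. To prove \eqref{eq:pInvariant}, I would start from $\sum p^1a\otimes p^2(b)=\sum\varphi^1a\otimes\varphi^2S(b\varphi^3)$, write $a=\sum a_1\varepsilon(a_2)$, and use the first relation in \eqref{eq:qbP} to rewrite $\varepsilon(a_2)S(b\varphi^3)=\sum a_{21}\,S(b\varphi^3a_{22})$; this produces $\sum\varphi^1a_1\otimes\varphi^2a_{21}\,S(b\varphi^3a_{22})$, and one application of the coassociativity-up-to-$\Phi$ relation moves the iterated coproduct of $a$ from the right of the associator to its left, turning the expression into $\sum a_{11}\varphi^1\otimes a_{12}\varphi^2\,S(ba_2\varphi^3)=\sum a_{11}p^1\otimes a_{12}p^2(ba_2)$, as required. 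Relation \eqref{eq:qInvariant} is the mirror statement: from $\sum q^1(a)\otimes bq^2=\sum S(\varphi^1a)\varphi^2\otimes\varepsilon(b_1)b_2\varphi^3$, the second relation in \eqref{eq:qbP} gives $\varepsilon(b_1)S(\varphi^1a)=\sum S(b_{11}\varphi^1a)b_{12}$, and the same coassociativity relation yields $\sum S(\varphi^1b_1a)\varphi^2b_{21}\otimes\varphi^3b_{22}=\sum q^1(b_1a)b_{21}\otimes q^2b_{22}$.

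I expect the only real obstacle to be the tensor-leg bookkeeping in \eqref{p-q-def-quasicase}: one must spot that each right-hand side is precisely a $\Phi^{\pm}$-rearrangement of the compact definition of $p$ (resp.\ $q$) in which a spurious factor $\Phi^{-1}\Phi$ (resp.\ $\Phi\Phi^{-1}$) has been inserted, so that after invoking the pentagon the rearrangement undoes itself and only one application of a preantipode axiom plus the counit normalization of $\Phi$ is left to perform. The invariance relations, by contrast, are completely routine consequences of \eqref{eq:qbP} and coassociativity-up-to-$\Phi$.
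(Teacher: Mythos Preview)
Your proposal is correct and follows essentially the same approach as the paper: for \eqref{p-q-def-quasicase} you both recognize the right-hand side as arising from a four-tensor that the (inverted) pentagon identity collapses to $(\Phi^{-1}\otimes 1)(\id\otimes\Delta\otimes\id)(\Phi^{-1})$, after which one application of \eqref{eq:qbP} and the counit normalization finish the job; for \eqref{eq:pInvariant}--\eqref{eq:qInvariant} you both use \eqref{eq:qbP} followed by quasi-coassociativity. The paper is more terse (it applies the relevant map directly to both sides of the pentagon rather than first isolating the four-tensor), but the content is the same.
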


\begin{proof}
The reassociator $\Phi $ satisfies the dual relation to \eqref{eq:3-cocycle}, i.e.
\begin{equation*}
(1\otimes \Phi ) \cdot (A\otimes \Delta \otimes A)\left( \Phi \right) \cdot \left(\Phi \otimes
1\right)= (A\otimes A\otimes
\Delta )\left( \Phi \right) \cdot (\Delta \otimes A\otimes A)\left( \Phi \right).
\end{equation*}
In particular, it satisfies
\begin{equation*}
\sum \varphi _{1}^{1}\psi ^{1}\otimes \varphi _{2}^{1}\psi ^{2}\Phi
^{1}\otimes \varphi ^{2}\psi _{1}^{3}\Phi ^{2}\otimes \varphi ^{3}\psi
_{2}^{3}\Phi ^{3}=\sum \varphi ^{1}\psi ^{1}\otimes \varphi ^{2}\psi
_{1}^{2}\otimes \varphi ^{3}\psi _{2}^{2}\otimes \psi ^{3}.
\end{equation*}
Applying $\left( A\otimes m\right) \, \left( A\otimes A\otimes m\right)
\, \left( A\otimes A\otimes (S\leftharpoonup a)\otimes A\right) $ to both sides we get
\begin{align*}
& \sum \varphi _{1}^{1}\psi ^{1}\otimes \varphi _{2}^{1}\psi ^{2}\Phi
^{1}S\left(a \varphi ^{2}\psi _{1}^{3}\Phi ^{2}\right) \varphi ^{3}\psi
_{2}^{3}\Phi ^{3} \\
&  = \sum \varphi ^{1}\psi ^{1}\otimes \varphi ^{2}\psi
_{1}^{2}S\left(a \varphi ^{3}\psi _{2}^{2}\right) \psi ^{3} \stackrel{\eqref{eq:qbP}}{=} \sum \varphi ^{1}\otimes \varphi ^{2}S\left(a \varphi ^{3}\right) = \sum p^1\otimes p^2(a),
\end{align*}
which is the first identity in \eqref{p-q-def-quasicase}. The second one is proved analogously. Let us check that \eqref{eq:pInvariant} holds as well (\eqref{eq:qInvariant} is proved similarly). We compute
\begin{align*}
\sum p^1a\otimes p^2(b) & \stackrel{\eqref{eq:Defp}}{=} \sum \varphi^1a\otimes \varphi^2S(b\varphi^3) \stackrel{\eqref{eq:qbP}}{=} \sum \varphi^1a_1\otimes \varphi^2a_{21}S(b\varphi^3a_{22}) \\
& \stackrel{(*)}{=} \sum a_{11}\varphi^1\otimes a_{12}\varphi^2S(ba_{2}\varphi^3) = \sum a_{11}p^1\otimes a_{12}p^2(ba_2),
\end{align*}
where in $(*)$ we used the quasi-coassociativity $\Phi\cdot (\Delta\otimes A)\Delta=(A\otimes \Delta)\Delta\cdot \Phi$.
\end{proof}

\begin{lemma*}
\label{p-q-deformation-quasicase}Let $(A,m,u,\Delta ,\varepsilon ,\Phi ,S)$
be a quasi-bialgebra with preantipode and let $p,q$ be defined
as above. For all $a\in A$ we have that
\begin{equation*}
S(a)=\sum q^{1}(1)S\left( p^{1}aq^{2}\right) p^{2}(1)=\sum S\left( \varphi
^{1}\right) \varphi ^{2}S\left( \psi ^{1}a\varphi ^{3}\right) \psi
^{2}S\left( \psi ^{3}\right) .
\end{equation*}
\end{lemma*}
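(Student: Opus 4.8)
The plan is to handle the two equalities separately. The rightmost one is bookkeeping only: by the definitions \eqref{eq:Defp} of $p$ and $q$, evaluating their operator-valued tensor legs at $1_{A}$ gives $q^{1}(1)=\sum S(\varphi^{1})\varphi^{2}$ and $p^{2}(1)=\sum\varphi^{2}S(\varphi^{3})$ (because $(\varphi^{3}\rightharpoonup S)(1)=S(\varphi^{3})$ and $(S\leftharpoonup\varphi^{1})(1)=S(\varphi^{1})$), while $p^{1}=\varphi^{1}$ and $q^{2}=\varphi^{3}$. Plugging these into $\sum q^{1}(1)S(p^{1}aq^{2})p^{2}(1)$ and using two independent copies of $\Phi^{-1}$ (say $\psi$ coming from $q$ and $\varphi$ from $p$) reproduces $\sum S(\varphi^{1})\varphi^{2}S(\psi^{1}a\varphi^{3})\psi^{2}S(\psi^{3})$ after a relabelling.

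For the first equality $S(a)=\sum q^{1}(1)S(p^{1}aq^{2})p^{2}(1)=:\Theta(a)$, I would peel off the $p$- and $q$-contributions one layer at a time. The first step needs only the invariance relation \eqref{eq:pInvariant}: specialising it to $p^{1}(aq^{2})\otimes p^{2}(1)=\sum(aq^{2})_{11}p^{1}\otimes(aq^{2})_{12}p^{2}((aq^{2})_{2})$ and applying the linear map $x\otimes y\mapsto q^{1}(1)\,S(x)\,y$, the left-hand side becomes $\Theta(a)$ and the right-hand side contains $\sum S((aq^{2})_{11}p^{1})(aq^{2})_{12}$, which collapses to $\varepsilon((aq^{2})_{1})S(p^{1})$ by the preantipode axiom $\sum S(x_{1}b)x_{2}=\varepsilon(x)S(b)$; counitality then contracts $(aq^{2})_{1}$ into $(aq^{2})_{2}$ and yields $\Theta(a)=\sum q^{1}(1)S(p^{1})p^{2}(aq^{2})$. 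Unwinding the definitions of $q^{1}(1)$, $p^{1}$ and $p^{2}$ turns this into $\sum S(\psi^{1})\psi^{2}S(\varphi^{1})\varphi^{2}S(a\psi^{3}\varphi^{3})$, and the remaining task is to bring this down to $S(a)$. For that I would substitute the closed forms \eqref{p-q-def-quasicase}, use the pentagon identity for $\Phi$ (exactly as in the proof of Lemma \ref{Lemma-p-q-quasicase}) to convert the components of $\Phi^{\pm1}$ into comultiplied factors, repeatedly apply $\sum S(x_{1}b)x_{2}=\varepsilon(x)S(b)$ and $\sum x_{1}S(bx_{2})=\varepsilon(x)S(b)$ to cancel the layers, and finish with the counit normalisations of $\Phi$ together with $\sum\Phi^{1}S(\Phi^{2})\Phi^{3}=1$.

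The last part is the main obstacle: no new idea is introduced, but the tensor-leg bookkeeping is delicate, and the order of reductions must be chosen so that the spurious $\Phi^{\pm1}$-factors telescope to $1_{A}$ rather than accumulating --- note in particular that the preantipode axioms can only act when the element being peeled sits at the boundary of an argument of $S$, which is why the pentagon is indispensable for the rearrangements. A convenient way to organise the calculation is to first record a short normal form for $\sum S(p^{1})p^{2}(c)$: expanding it via \eqref{p-q-def-quasicase} and applying $\sum S(x_{1}b)x_{2}=\varepsilon(x)S(b)$ once already removes one copy of $\Phi^{-1}$, a counit normalisation does the next, and iterating then brings $\Theta$ to $S$. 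Alternatively one can bypass the explicit telescoping entirely by verifying that $\Theta$ satisfies the defining relations \eqref{eq:qbP} of a preantipode and invoking the uniqueness of the preantipode of a quasi-bialgebra (cf. \cite{StructTheo}).
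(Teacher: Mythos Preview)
Your handling of the second equality is fine: unwinding the definitions of $p$ and $q$ with two independent copies of $\Phi^{-1}$ gives exactly $\sum S(\varphi^{1})\varphi^{2}S(\psi^{1}a\varphi^{3})\psi^{2}S(\psi^{3})$.

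For the first equality your opening move via \eqref{eq:pInvariant} is correct and does yield $\Theta(a)=\sum q^{1}(1)S(p^{1})p^{2}(aq^{2})=\sum S(\psi^{1})\psi^{2}S(\varphi^{1})\varphi^{2}S(a\psi^{3}\varphi^{3})$. The gap is in what follows. Notice that this expression contains \emph{no comultiplied factors} at all: the $\psi^{i}$ and $\varphi^{i}$ are bare tensor legs of $\Phi^{-1}$, so neither preantipode axiom $a_{1}S(ba_{2})=\varepsilon(a)S(b)$ nor $S(a_{1}b)a_{2}=\varepsilon(a)S(b)$ can be applied as it stands. Your plan to ``convert the components of $\Phi^{\pm1}$ into comultiplied factors'' via the pentagon is exactly the step that needs to be carried out, and you have not done so; moreover, the moment you reinsert comultiplications via the pentagon or via the closed forms \eqref{p-q-def-quasicase}, you are back to the expression the paper works with, so the detour through \eqref{eq:pInvariant} buys nothing. (The alternative you mention, checking that $\Theta$ satisfies \eqref{eq:qbP} and invoking uniqueness, has the same problem: in $\sum a_{1}\Theta(ba_{2})$ the factor $a_{1}$ is separated from $S(\cdots a_{2}\cdots)$ by $q^{1}(1)$, so the axioms again do not apply directly.)

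The paper's proof bypasses this entirely by substituting the closed forms \eqref{p-q-def-quasicase} for \emph{both} $p^{1}\otimes p^{2}(1)$ and $q^{1}(1)\otimes q^{2}$ into $\sum q^{1}(1)S(p^{1}aq^{2})p^{2}(1)$ at the outset. Those formulas already contain comultiplied copies of $\Phi^{\pm1}$ sitting at the boundaries of the $S$-arguments, so two applications of each preantipode axiom in \eqref{eq:qbP}, together with counitality of $\Phi^{-1}$, collapse the whole thing to $\sum\Phi^{1}S(\Phi^{2})\Phi^{3}\,S(a)\,\Psi^{1}S(\Psi^{2})\Psi^{3}=S(a)$ in two lines. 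The moral is that the invariance relations \eqref{eq:pInvariant}--\eqref{eq:qInvariant} are not the right tool here; it is the expanded formulas \eqref{p-q-def-quasicase} that place the comultiplications where the preantipode axioms can see them.
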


\begin{proof}
Keeping in mind that $\Phi^{-1}$ is counital, i.e. that it satisfies
\begin{equation*}
\left(\varepsilon\otimes A\otimes A\right)\left(\Phi^{-1}\right)=1\otimes 1=\left( A\otimes \varepsilon\otimes A\right)\left(\Phi^{-1}\right)=1\otimes 1=\left(A\otimes A\otimes \varepsilon\right)\left(\Phi^{-1}\right),
\end{equation*}
we may compute directly
\begin{align*}
& \sum  S\left( \varphi ^{1}\right) \varphi ^{2}S\left( \psi ^{1}a\varphi
^{3}\right) \psi ^{2}S\left( \psi ^{3}\right) =\sum q^{1}(1)S\left(
p^{1}aq^{2}\right) p^{2}(1) \\
\hspace{-1pt} & \hspace{-1pt} \stackrel{\eqref{p-q-def-quasicase}}{=} \hspace{-1pt} \sum \Phi ^{1}\varphi _{1}^{1}\psi
^{1}S\left( \Phi ^{2}\varphi _{2}^{1}\psi ^{2}\right) \Phi ^{3}\varphi ^{2}
\psi _{1}^{3}S\left( \gamma_{1}^{1}\phi
^{1}a\varphi ^{3}\psi _{2}^{3}\right) \gamma_{2}^{1}
\phi^{2}\Psi^{1}S\left( \gamma^{2}
\phi_{1}^{3}\Psi^{2}\right) \gamma
^{3}\phi_{2}^{3}\Psi^{3} \\
\hspace{-1pt} & \hspace{-1pt} \stackrel{\eqref{eq:qbP}}{=} \hspace{-1pt} \sum \Phi ^{1}\varphi _{1}^{1}S\left( \Phi ^{2}\varphi
_{2}^{1}\right) \Phi ^{3}\varphi ^{2}S\left( \phi^{1}a\varphi
^{3}\right) \phi^{2}\Psi^{1}S\left( 
\phi_{1}^{3}\Psi^{2}\right) \phi
_{2}^{3}\Psi^{3} \\
\hspace{-1pt} & \hspace{-1pt} \stackrel{\eqref{eq:qbP}}{=} \hspace{-1pt} \sum \Phi ^{1}S\left( \Phi ^{2}\right) \Phi ^{3}S\left( a\right) 
\Psi^{1}S\left( \Psi^{2}\right) \Psi
^{3}=S(a). \qedhere
\end{align*}
\end{proof}

\begin{proposition*}
Let $(A,m,u,\Delta ,\varepsilon ,\Phi ,S)$ be a quasi-bialgebra with
preantipode. For all $a,b\in A$ we have
\begin{equation}
S\left( ab\right) =\sum S\left( \varphi ^{1}b\right) \varphi ^{2}S\left(
\psi ^{1}\varphi ^{3}\right) \psi ^{2}S\left( a\psi ^{3}\right) .
\label{antimultpreanti}
\end{equation}
\end{proposition*}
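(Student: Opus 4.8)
The plan is to derive \eqref{antimultpreanti} by repeatedly inserting copies of the reassociator and its inverse and using the two defining relations \eqref{eq:qbP} for the preantipode, guided by the two preceding lemmas (the $p,q$-invariance relations \eqref{eq:pInvariant}, \eqref{eq:qInvariant} and the deformation formula $S(a)=\sum q^1(1)S(p^1aq^2)p^2(1)$). The key observation is that the right-hand side of \eqref{antimultpreanti} can be read as an instance of the deformation lemma applied not to a single element but to the product structure, so the strategy is: start from $S(ab)$, apply the deformation lemma with $a$ replaced by $ab$, obtaining $S(ab)=\sum q^1(1)S(p^1abq^2)p^2(1)$, and then push the $b$ through using the invariance property \eqref{eq:pInvariant} (which says precisely that $\sum p^1a\otimes p^2(b)=\sum a_{11}p^1\otimes a_{12}p^2(ba_2)$, i.e. that the element $p$ intertwines left multiplication with a twisted coaction). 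The symmetric relation \eqref{eq:qInvariant} handles the $q$-side.

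Concretely, first I would rewrite the target using the explicit form of $p$ and $q$: since $p=\sum\varphi^1\otimes\varphi^2(\varphi^3\rightharpoonup S)$ and $q=\sum(S\leftharpoonup\varphi^1)\varphi^2\otimes\varphi^3$, the deformation lemma gives $S(ab)=\sum S(\varphi^1)\varphi^2 S(\psi^1 ab\varphi^3)\psi^2 S(\psi^3)$. The goal \eqref{antimultpreanti} has the same shape but with $b$ pulled out to the far left inside $S(\varphi^1 b)$ and the middle rearranged. So the second step is to manipulate $S(\psi^1 ab\varphi^3)$: using the first identity in \eqref{eq:qbP}, namely $\sum x_1 S(y x_2)=\varepsilon(x)S(y)$ read backwards, one can split off factors and reattach them elsewhere, trading a factor multiplying $b$ on the outside for a comultiplication applied to that factor. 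The invariance relations are exactly the bookkeeping tool that makes these trades legal in the presence of the non-trivial associator, so I would invoke \eqref{eq:pInvariant} and \eqref{eq:qInvariant} rather than redo the $\Phi$-cocycle gymnastics from scratch.

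The main obstacle I anticipate is the associator bookkeeping: each application of \eqref{eq:qbP} in a quasi-bialgebra must be accompanied by a reassociator insertion, and keeping the Sweedler indices consistent while three or four independent copies $\Phi,\Psi,\varphi^{-1},\psi^{-1},\gamma$ float around is where errors creep in. The cleanest route is probably to phrase everything in terms of the elements $p$ and $q$ and their invariance, so that the associators never appear explicitly in the main computation — the hard $\Phi$-manipulations are then entirely quarantined inside the proofs of the two lemmas, which are already done. If that abstraction works, the proof of \eqref{antimultpreanti} becomes a short chain: apply the deformation lemma to $ab$, use \eqref{eq:pInvariant} to move $b$ past $p$ picking up a comultiplication, use \eqref{eq:qInvariant} symmetrically, and collapse the resulting comultiplications against $S$ via \eqref{eq:qbP}, landing on the stated right-hand side. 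I would write it in that order, flagging the $p,q$-reformulation as the conceptual heart and treating the index chase as routine.
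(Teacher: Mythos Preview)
Your plan is exactly the paper's proof: start from the deformation lemma applied to $ab$, use the two invariance relations \eqref{eq:pInvariant} and \eqref{eq:qInvariant} to separate $a$ and $b$, and collapse the resulting comultiplications with \eqref{eq:qbP}, arriving at $\sum q^1(b)S(p^1q^2)p^2(a)$, which is the right-hand side of \eqref{antimultpreanti}. One small slip: it is \eqref{eq:pInvariant} that moves $a$ into $p^2(a)$ and \eqref{eq:qInvariant} that moves $b$ into $q^1(b)$, not the other way around---but once you start the computation this sorts itself out immediately, and the argument goes through in either order.
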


\begin{proof}
We know from Lemma \ref{p-q-deformation-quasicase} that $S(a)=\sum q^1(1)S\left( p^{1}aq^{2}\right) p^2(1)$. Relation \eqref{antimultpreanti} is proved directly by applying it to $S\left( ab\right) $:
\begin{align*}
S\left( ab\right)  &\stackrel{\phantom{(52)}}{=}\sum q^1(1)S\left( p^{1}abq^{2}\right) p^2(1) \stackrel{\eqref{eq:pInvariant}}{=} \sum q^1(1)S\left( a_{11}p^{1}bq^{2}\right)a_{12} p^2(a_2) \\
& \stackrel{\eqref{eq:qbP}}{=}  \sum q^1(1)S\left(p^{1}bq^{2}\right) p^2(a) \stackrel{\eqref{eq:qInvariant}}{=} \sum q^1(b_1)b_{21}S\left(p^{1}q^{2}b_{22}\right) p^2(a)  \\
& \stackrel{\eqref{eq:qbP}}{=}  \sum q^1(b)S\left(p^{1}q^{2}\right) p^2(a) = \sum S\left( \varphi ^{1}b\right) \varphi ^{2}S\left( \psi ^{1}\varphi
^{3}\right) \psi ^{2}S\left( a\psi ^{3}\right). \qedhere
\end{align*}
\end{proof}

\noindent Formula \eqref{antimultpreanti} can be viewed as an
anti-multiplicativity of the preantipode.

\end{document}